\documentclass[a4paper,11pt]{article}
\usepackage{amsmath,amsthm}
\usepackage{changepage}
\usepackage{authblk}
\usepackage[style=numeric-comp,maxbibnames=99,bibencoding=utf8,giveninits=true]{biblatex}
\usepackage{cancel}
\usepackage[nomarkers,nolists]{endfloat}
\usepackage[hmargin={25mm,25mm},vmargin={30mm,35mm}]{geometry}
\usepackage{loglogslopetriangle}
\usepackage{nicefrac}
\usepackage[colorlinks, allcolors=blue]{hyperref}
\usepackage{pgfplots}
\pgfplotsset{compat=1.18}
\usepackage{subcaption}
\usepackage[compact,small]{titlesec}
\usepackage{tikz}
\usepackage{newtxmath}
\usepackage{newtxtext}
\usepackage{aligned-overset}
\usepackage{enumitem}
%\usepackage{endfloat}
%------------------------------------------------------------------------------%
% Use BibLaTeX for bibliography
%------------------------------------------------------------------------------%

\usetikzlibrary{decorations.markings}
\bibliography{curlcurl2d}

%------------------------------------------------------------------------------%
% Appearance
%------------------------------------------------------------------------------%

\newcommand{\email}[1]{\href{mailto:#1}{#1}}

\allowdisplaybreaks

\numberwithin{equation}{section}

%------------------------------------------------------------------------------%
% For corrections between versions
%------------------------------------------------------------------------------%

\usepackage[normalem]{ulem}
\normalem
\newcounter{corr}
\definecolor{violet}{rgb}{0.580,0.,0.827}
\newcommand{\corr}[3]{\typeout{Warning : a correction remains in page \thepage}
  \stepcounter{corr}        
	            {\color{blue}\ifmmode\text{\,\sout{\ensuremath{#1}}\,}\else\sout{#1}\fi}
              {\color{red}#2}
              {\color{violet} #3}
}

%------------------------------------------------------------------------------%
% Document-specific macros
%------------------------------------------------------------------------------%

% Theorem environments
\newtheorem{theorem}{Theorem}

\newtheorem{lemma}[theorem]{Lemma}
\newtheorem{corollary}[theorem]{Corollary}
\theoremstyle{remark}
\newtheorem{remark}[theorem]{Remark}
\theoremstyle{definition}
\newtheorem{assumption}[theorem]{Assumption}
\newtheorem{definition}[theorem]{Definition}

% General macros
\newcommand{\st}{\,:\,}
\newcommand{\Real}{\mathbb{R}}

% Notation for vectors and underlined vectors
% Robust version of bold command for vectors
\DeclareRobustCommand{\bvec}[1]{\boldsymbol{#1}}
\pdfstringdefDisableCommands{%
  \renewcommand{\bvec}[1]{#1}%
}
\newcommand{\Id}{{\rm Id}}
\let\ser\widehat
\newcommand{\svec}[1]{\ser{\bvec{#1}}}

\newcommand{\uvec}[1]{\underline{\bvec{#1}}}
\newcommand{\suvec}[1]{\underline{\ser{\bvec{#1}}}}
\newcommand{\cvec}[1]{\bvec{\mathcal{#1}}}

% Cardinality of a set and dimension of a vector space

% Differential operators
\DeclareMathOperator{\GRAD}{\bf grad}
\DeclareMathOperator{\DIV}{div}
\DeclareMathOperator{\ROT}{rot}
\DeclareMathOperator{\VROT}{\bf rot}
\DeclareMathOperator{\CURL}{\bf curl}

% Relevant Sobolev spaces
\newcommand{\Hrotrot}[1]{\bvec{H}(\VROT\ROT;#1)}
\newcommand{\Hrot}[1]{\bvec{H}(\ROT;#1)}
\newcommand{\Hdiv}[1]{\bvec{H}(\DIV;#1)}

\newcommand{\Hcurl}[1]{\bvec{H}(\CURL;#1)}

% Symbol for complement
\newcommand{\compl}{{\rm c}}

% Polynomial spaces
\newcommand{\Poly}[1]{\mathcal{P}^{#1}}
\newcommand{\vPoly}[1]{\cvec{P}^{#1}}
\newcommand{\Roly}[1]{\cvec{R}^{#1}}
\newcommand{\cRoly}[1]{\cvec{R}^{\compl,#1}}
\newcommand{\Goly}[1]{\cvec{G}^{#1}}
\newcommand{\cGoly}[1]{\cvec{G}^{\compl,#1}}
\newcommand{\TPoly}[1]{\boldsymbol{\mathcal{P}}_{\normal}^{#1}(T_1)}

\newcommand{\lPoly}[2][]{\mathcal{P}_{#1}^{#2}}

% Projectors
\newcommand{\lproj}[2]{\pi_{\mathcal{P},#2}^{#1}}
\newcommand{\vlproj}[2]{\bvec{\pi}_{\cvec{P},#2}^{#1}}
\newcommand{\Rproj}[2]{\bvec{\pi}_{\cvec{R},#2}^{#1}}
\newcommand{\cRproj}[2]{\bvec{\pi}_{\cvec{R},#2}^{\compl,#1}}
\newcommand{\Gproj}[2]{\bvec{\pi}_{\cvec{G},#2}^{#1}}
\newcommand{\Gcproj}[2]{\bvec{\pi}_{\cvec{G},#2}^{\compl,#1}}
\newcommand{\Xcproj}[2]{\bvec{\pi}_{\cvec{X},#2}^{\compl,#1}}

% Sets of faces, edges, and vertices

\newcommand{\normal}{\bvec{n}}
\newcommand{\tangent}{\bvec{t}}
% Mesh

% Image
\DeclareMathOperator{\Kernel}{Ker}
\DeclareMathOperator{\Image}{Im}

% Norms and minorms
\newcommand{\norm}[2]{\|#2\|_{#1}}

\newcommand{\vvvert}{\vert\kern-0.25ex\vert\kern-0.25ex\vert}

% Spaces
\newcommand{\Xgrad}[2]{\underline{W}_{\GRAD,#2}^{#1}}
\newcommand{\Xcurl}[2]{\uvec{W}_{\CURL,#2}^{#1}}
\newcommand{\Xrot}[2]{\uvec{W}_{\ROT,#2}^{#1}}
\newcommand{\Xdiv}[2][k]{\uvec{W}_{\DIV,#2}^{#1}}

\newcommand{\XL}[2]{W_{L^2,#2}^{#1}}
\newcommand{\SXgrad}[2]{\underline{\widehat{W}}_{\GRAD,#2}^{#1}}

\newcommand{\SV}[1]{\underline{V}_{\GRAD,#1}^k}
\newcommand{\SSigma}[1]{\uvec{V}_{\ROT,#1}^k}
\newcommand{\SW}[1]{\underline{V}_{H^1,#1}^k}
\newcommand{\SGr}[1]{\underline{V}_{\GRAD,#1}^k}
\newcommand{\SCr}[1]{\uvec{V}_{\CURL,#1}^k}
\newcommand{\SDr}[1]{\uvec{V}_{\DIV,#1}^k}
\newcommand{\SVser}[1]{\widehat{\underline{V}}_{\GRAD,#1}^k}
\newcommand{\SSigmaser}[1]{\suvec{V}_{\ROT,#1}^k}
\newcommand{\SXrot}[1]{\suvec{W}_{\ROT,#1}^k}
\newcommand{\SXcurl}[1]{\suvec{W}_{\CURL,#1}^k}

\newcommand{\SSGr}[1]{\widehat{\underline{V}}_{\GRAD,#1}^k}
\newcommand{\SSCr}[1]{\suvec{V}_{\CURL,#1}^k}

% Injection of SSigma into Xrot

\newcommand{\Injrot}{\uvec{\mathcal E}_{\ROT,h}^k}
\newcommand{\Resrot}{\uvec{\mathcal R}_{\ROT,h}^k}

\newcommand{\sInjrot}{\suvec{\mathcal E}_{\ROT,h}^k}
\newcommand{\sResrot}{\suvec{\mathcal R}_{\ROT,h}^k}

\newcommand{\ResW}[1]{\mathcal {R}_{H^1,#1}^k}
\newcommand{\InjW}[1]{\underline{\mathcal E}_{H^1,#1}^k}
\newcommand{\Injgrad}{\underline{\mathcal E}_{\GRAD,h}^k}
\newcommand{\Injcurl}{\uvec{\mathcal E}_{\CURL,h}^k}
\newcommand{\Injdiv}{\uvec{\mathcal E}_{\DIV,h}^k}
\newcommand{\Resgrad}{\underline{\mathcal R}_{\GRAD,h}^k}
\newcommand{\Rescurl}{\uvec{\mathcal R}_{\CURL,h}^k}
\newcommand{\Resdiv}{\uvec{\mathcal R}_{\DIV,h}^k}
\newcommand{\sInjgrad}{\widehat{\underline{\mathcal E}}_{\GRAD,h}^k}
\newcommand{\sInjcurl}{\suvec{\mathcal E}_{\CURL,h}^k}
\newcommand{\sResgrad}{\widehat{\underline{\mathcal R}}_{\GRAD,h}^k}
\newcommand{\sRescurl}{\suvec{\mathcal R}_{\CURL,h}^k}

% Interpolators
\newcommand{\sIrot}[1]{\suvec{I}_{\ROT,#1}^{k}}
\newcommand{\sIcurl}[1]{\suvec{I}_{\CURL,#1}^{k}}
\newcommand{\sIGrad}[1]{{\underline{\widehat I}}_{\GRAD,#1}^{k}}
\newcommand{\IV}[1]{\underline{I}_{V,#1}^{k}}
\newcommand{\sIV}[1]{\underline{\widehat I}_{V,#1}^{k}}
\newcommand{\ISigma}[1]{\uvec{I}_{\bvec{\Sigma},#1}^k}
\newcommand{\sISigma}[1]{\suvec{I}_{\bvec{\Sigma},#1}^k}
% Operators
\newcommand{\GE}{G_{T_1}^k}
\newcommand{\GT}{\bvec{G}_{T_3}^k}
\newcommand{\GF}{\bvec{G}_{T_2}^k}

\newcommand{\Gddr}[1]{\uvec{\partial}_{\GRAD,#1}^k}
\newcommand{\rotddr}[1]{\partial_{\ROT,#1}^k}
\newcommand{\sGddr}[1]{\suvec{\partial}_{\GRAD,#1}^{k}}
\newcommand{\sCddr}[1]{\suvec{\partial}_{\CURL,#1}^{k}}

\newcommand{\sRddr}[1]{\widehat{\partial}_{\ROT,#1}^{k}}
\newcommand{\Cddr}[1]{\uvec{\partial}_{\CURL,#1}^{k}}
\newcommand{\Ext}[1]{\mathcal{E}_{#1}}
\newcommand{\Red}[1]{\mathcal{R}_{#1}}
\newcommand{\sExt}[1]{\widehat{\mathcal{E}}_{#1}}
\newcommand{\sRed}[1]{\widehat{\mathcal{R}}_{#1}}
\newcommand{\ReW}[2]{\widehat{R}_{{#1}_{#2}}}
\newcommand{\EX}[2]{E_{{#1}_{#2}}}
\newcommand{\EPoly}[2][k-1]{E_{\Poly{},#2}^{#1}}
\newcommand{\RPoly}[1]{\widehat{R}^{\ell_{#1}}_{\Poly{},#1}}
\newcommand{\RRoly}[1]{\svec{R}^{k-1}_{\cvec{R},#1}}
\newcommand{\trF}{\gamma_{T_2}^{k+1}}
\newcommand{\trFt}{\bvec{\gamma}_{{\rm t},T_2}^k}
\newcommand{\trE}{\gamma_{T_1}^{k+1}}

\newcommand{\uGh}[1]{\uvec{d}_{\GRAD,{#1}}^k}
\newcommand{\uCh}[1]{\uvec{d}_{\CURL,{#1}}^k}

\newcommand{\uRh}[1]{\underline{d}_{\ROT,#1}^k}
\newcommand{\CF}[1][k]{C_{T_2}^{#1}}
\newcommand{\CT}[1][k]{\bvec{C}_{T_3}^{#1}}
\newcommand{\Dh}[1]{\partial_{\DIV,#1}^k}
\newcommand{\DT}[1][k]{D_{T_3}^{#1}}
\newcommand{\suRh}[1]{\underline{\widehat{d}}_{\ROT,#1}^k}
\newcommand{\uDh}[1]{d_{\DIV,#1}^k}

% Serendipity Operators

\newcommand{\SerGrad}[1]{\bvec{S}_{\GRAD,#1}^k}

\newcommand{\SerCurl}[1]{\bvec{S}_{\CURL,#1}^k}
\newcommand{\SerRot}[1]{\bvec{S}_{\ROT,#1}^k}

\newcommand{\suGh}[1]{\suvec{d}_{\GRAD,#1}^k}
\newcommand{\suCh}[1]{\suvec{d}_{\CURL,#1}^k}

% Extension and reduction operators
\newcommand{\RV}[1]{\underline{\widehat{R}}_{V,\GRAD,#1}}
\newcommand{\EV}[1]{\underline{E}_{V,\GRAD,#1}}
\newcommand{\Rrot}[1]{\suvec{R}_{\bvec{W},\ROT,#1}}
\newcommand{\Erot}[1]{\uvec{E}_{\bvec{W},\ROT,#1}}
\newcommand{\Rcurl}[1]{\suvec{R}_{\bvec{W},\CURL,#1}}
\newcommand{\Ecurl}[1]{\uvec{E}_{\bvec{W},\CURL,#1}}
\newcommand{\Egrad}[1]{\underline{E}_{W,\GRAD,#1}}
\newcommand{\Rgrad}[1]{\underline{\widehat{R}}_{\bvec{W},\GRAD,#1}}
\newcommand{\RS}[1]{\suvec{R}_{\bvec{V},\ROT,#1}}
\newcommand{\ES}[1]{\uvec{E}_{\bvec{V},\ROT,#1}}

% Consistency errors

%\newcommand{\Egrad}{\mathcal{E}_{\GRAD,h}^k}

%Selection

%------------------------------------------------------------------------------%
%------------------------------------------------------------------------------%

\begin{document}

\title{Serendipity discrete complexes with enhanced regularity}
\author[1]{Daniele A. Di Pietro}
\author[2]{Marien Hanot}
\author[1]{Marwa Salah}
\affil[1]{IMAG, Univ Montpellier, CNRS, Montpellier, France, \email{daniele.di-pietro@umontpellier.fr}, \email{marwa.salah@umontpellier.fr}}
\affil[2]{University of Edinburgh, Edinburgh, United Kingdom, \email{mhanot@ed.ac.uk}}

\maketitle

\begin{abstract}
  In this work we address the problem of finding serendipity versions of approximate de Rham complexes with enhanced regularity.
  The starting point is a new abstract construction of general scope which, given three complexes linked by extension and reduction maps, generates a fourth complex with cohomology isomorphic to the former three.
  This construction is used to devise new serendipity versions of rot-rot and Stokes complexes derived in the Discrete de Rham spirit.
  \medskip\\
  \noindent\textbf{Key words.} Discrete de Rham method, %
  compatible discretizations, %
  serendipity, %
  rot-rot complex, %
  Stokes complex
  \smallskip\\
  \textbf{MSC2010.} %
  65N30, % Finite element, Rayleigh-Ritz and Galerkin methods for boundary value problems involving PDEs
  65N99, % None of the above, but in this section
  65N12, % Stability and convergence of numerical methods for boundary value problems involving PDEs
  35Q60  % PDEs in connection with optics and electromagnetic theory
\end{abstract}

%% \tableofcontents

%------------------------------------------------------------------------------%

\section{Introduction}

In this work we address the question of finding serendipity versions of discrete de Rham complexes with enhanced regularity.
The starting point is a new construction of general scope which, given three complexes connected by extension and reduction maps in the spirit of \cite{Di-Pietro.Droniou:23*1}, generates a fourth complex with cohomology isomorphic to the former three.

In the context of finite elements, the word ``serendipity'' refers to the possibility, on certain element geometries, to discard some internal degrees of freedom (DOFs) without modifying the approximation properties of the underlying space; see, e.g., \cite{Arnold.Awanou:11,Gillette.Klofkorn:19} for recent developments in the context of the approximation of Hilbert complexes.
In the context of arbitrary-order polyhedral methods, serendipity techniques were first developed in \cite{Beirao-da-Veiga.Brezzi.ea:18} to build a reduced version of the nodal ($H^1$-conforming) virtual space.
Similar ideas had been previously followed in \cite{Di-Pietro:12} to reduce the number of element DOFs in the framework of discontinuous Galerkin methods and in \cite{Eymard.Gallouet.ea:10} to eliminate element DOFs in hybrid finite volume methods; see also \cite{Droniou.Eymard.ea:18} on this subject.

When applying serendipity techniques to a discrete complex rather than a single space, one must make sure that the elimination of DOFs does not alter its homological properties.
Compatible serendipity techniques to reduce the number of face DOFs in virtual element discretizations of the de Rham complex have been developed in \cite{Beirao-da-Veiga.Brezzi.ea:17,Beirao-da-Veiga.Brezzi.ea:18*1}, where a direct proof of local exactness properties was provided.
A variation of the discrete complex in the previous reference has been recently proposed in \cite{Beirao-da-Veiga.Dassi.ea:22}, where links with Discrete de Rham (DDR) methods have also been established.
A systematic approach to serendipity for polyhedral approximations of discrete complexes, including the elimination of both element and face DOFs, has been recently proposed in \cite{Di-Pietro.Droniou:23*1} and applied to the DDR complex of \cite{Di-Pietro.Droniou:23*2} (see also \cite{Di-Pietro.Droniou.ea:20} for preliminary developments and \cite{Bonaldi.Di-Pietro.ea:23} for an extension to differential forms).

In practical applications, Hilbert complexes different from (but typically linked to \cite{Arnold.Hu:21}) the de Rham complex are often relevant.
Examples include:
the rot-rot complex, which naturally arises when considering quad-rot problems;
the Stokes complex, relevant for incompressible flow problems;
the div-div complex, appearing in the modeling of thin plates.
Discretizations of such complexes in the DDR spirit have been recently proposed in \cite{Di-Pietro:23}, \cite{Hanot:23}, and \cite{Di-Pietro.Droniou:23}, respectively.
To this date, however, the literature on serendipity techniques for advanced Hilbert complexes is extremely limited.
An example in the context of polyhedral methods is provided by \cite{Botti.Di-Pietro.ea:23}, where a serendipity version of the DDR div-div complex is proposed and studied.
The goal of the present work is to fill this gap by proposing a general construction that makes it possible to derive in a systematic way a serendipity version of an advanced discrete complex whenever a serendipity version of the underlying de Rham complex is available.
The construction is applied to the derivation and study of discrete versions of the discrete rot-rot and Stokes complexes of \cite{Di-Pietro:23,Hanot:23}.

The rest of this work is organized as follows.
In Section~\ref{sec:gen} we present the abstract construction.
The discrete de Rham complex of \cite{Di-Pietro.Droniou:23*2} along with its serendipity version of \cite{Di-Pietro.Droniou:23*1} are briefly recalled in Section~\ref{sec:ddr.sddr}.
Serendipity versions of the rot-rot complex of \cite{Di-Pietro:23} and of the Stokes complex of \cite{Hanot:23} are derived and studied in Section~\ref{sec:rotrot} and \ref{sec:stokes}, respectively.
Section~\ref{sec:rotrot} also contains numerical experiments comparing the performance of the serendipity and original rot-rot complexes on a quad-rot problem.  

%------------------------------------------------------------------------------%

\section{An abstract framework for serendipity complexes with enhanced regularity}\label{sec:gen}

In this section we present an abstract framework that, given three complexes linked by suitable reduction and extension operators, allows one to construct a fourth complex with cohomology isomorphic to the others.
The application that we have in mind is the construction of serendipity versions of the de Rham complex with enhanced regularity.

\subsection{Setting}

We consider the situation depicted in the following diagram, involving three complexes $(W_i,\partial_i)_i$, $(\widehat{W}_i,\widehat{\partial_i})_i$, and $(V_i,d_i)_i$ :
\begin{equation}\label{eq:abstract:starting.point}
  \begin{tikzpicture}[xscale=2.5, yscale=1.75, baseline={(Wi.base)}]
    \node (SWi) at (0,1) {$\widehat{W}_i$};
    \node (SWi1) at (1,1) {$\widehat{W}_{i+1}$};

    \node (Wi) at (0,0) {$W_i$};
    \node (Wi1) at (1,0) {$W_{i+1}$};
    
    \node (Vi) at (0,-1) {$V_i$};
    \node (Vi1) at (1,-1) {$V_{i+1}$};

    \draw [->] (SWi) to node[above, font=\footnotesize]{$\widehat{\partial}_i$} (SWi1);
    \draw [->] (Wi) to node[above, font=\footnotesize]{$\partial_i$} (Wi1);
    \draw [->] (Vi) to node[above, font=\footnotesize]{$d_i$} (Vi1);
    
    \draw [->,dashed] (-1,-1) -- (Vi);
    \draw [->,dashed] (-1,0) -- (Wi);
    \draw [->,dashed] (-1,1) -- (SWi);

    \draw [->,dashed] (SWi1) -- (2,1);
    \draw [->,dashed] (Wi1) -- (2,0);
    \draw [->,dashed] (Vi1) -- (2,-1);

    \draw [->,dashed] (Wi) to [bend left=20] node[left, font=\footnotesize] {$\ReW{W}{i}$}(SWi);
    \draw [->,dashed] (Wi1) to [bend left=20] node[left, font=\footnotesize] {$\ReW{W}{i+1}$}(SWi1);

    \draw [->] (SWi) to [bend left=20] node[right, font=\footnotesize] {$\EX{W}{i}$}(Wi);
    \draw [->] (SWi1) to [bend left=20] node[right, font=\footnotesize] {$\EX{W}{i+1}$}(Wi1);

    \draw [->,dashed] (Vi) to [bend left=20] node[left, font=\footnotesize] {$\Red{i}$}(Wi);
    \draw [->,dashed] (Vi1) to [bend left=20] node[left, font=\footnotesize] {$\Red{i+1}$}(Wi1);

    \draw [->] (Wi) to [bend left=20] node[right, font=\footnotesize] {$\Ext{i}$}(Vi);
    \draw [->] (Wi1) to [bend left=20] node[right, font=\footnotesize] {$\Ext{i+1}$}(Vi1);
  \end{tikzpicture}
\end{equation}
The complexes $(W_i,\partial_i)_i$ and $(\widehat{W}_i,\widehat{\partial_i})_i$ are linked by linear extension and reduction operators $\EX{W}{i}\st \widehat{W}_i \to W_i$ and $\ReW{W}{i}\st W_i \to \widehat{W}_i$ that meet the following assumption.

\begin{assumption}[Properties of $\EX{W}{i}$ and $\ReW{W}{i}$]\label{Isom.W.W} 
    It holds:
  \begin{enumerate}[label=\textbf{(A{\arabic*})}]
  \item\label{P:Identity.ERW} $(\ReW{W}{i}\EX{W}{i})_{|\Kernel\widehat{\partial}_i} =\Id_{\Kernel\widehat{\partial}_i} $.
  \item\label{P:complex.ERW} $(\EX{W}{i+1}\ReW{W}{i+1}-\Id_{W{i+1}})(\Kernel \partial_{i+1}) \subset \Image(\partial_i)$.
  \item\label{P:E.R.W.cochain} $\ReW{W}{i+1} \partial_i=\widehat{\partial}_i\ReW{W}{i}$ and $\EX{W}{i+1}\widehat{\partial}_i= \partial_i \EX{W}{i}$.
  \end{enumerate}
\end{assumption}
By \cite[Proposition~2]{Di-Pietro.Droniou:23*1}, Assumption~\ref{Isom.W.W} guarantees the cohomologies of the complexes $(W_i,\partial_i)_i$ and $(\widehat{W}_i,\widehat{\partial_i})_i$ are isomorphic. 
Additionally, the upper diagram in \eqref{eq:abstract:starting.point} is commutative and we have:
\begin{equation}\label{eq:comm.diag}
  \widehat{\partial}_i=\ReW{W}{i+1}{\partial}_i\EX{W}{i}.
\end{equation}
Examples of complexes $(W_i,\partial_i)_i$ and $(\widehat{W}_i,\widehat{\partial_i})_i$ and of the corresponding reduction and extension operators that match Assumption~\ref{Isom.W.W} are provided by the two- and three-dimensional discrete de Rham complexes \eqref{eq:glob.2D-ddr.complex} and \eqref{eq:glob.3d-ddr.complex} below and their serendipity versions recalled in Section \ref{sec:ddr.sddr}.

In the applications of Sections~\ref{sec:rotrot} and \ref{sec:stokes} , $(V_i,d_i)_i$ is an extended version of $(W_i,\partial_i)_i$ with enhanced regularity, which is linked to $(W_i, \partial_i)_i$ by the linear extension and reduction operators $\Ext{i}\st W_i\to V_i$ and $\Red{i}\st V_i\to W_i$.

\begin{assumption}[Properties of $\Ext{i}$ and $\Red{i}$]\label{Isom.W.V}  
  It holds:
  \begin{enumerate}[label=\textbf{(B{\arabic*})}]
  \item\label{P:Identity.ER} $\Red{i}\Ext{i} =\Id_{W_{i}} $.
  \item\label{P:complex.ER} $(\Ext{i+1}\Red{i+1}-\Id_{V_{i+1}})(\Kernel d_{i+1}) \subset \Image(d_i)$.
  \item\label{P:E.R.cochain} $\Red{i+1} d_i=\partial_i\Red{i}$ and $\Ext{i+1}\partial_i= d_i \Ext{i}$.
  \end{enumerate}
\end{assumption}

\begin{remark}[Isomorphic cohomologies]\label{rem:iso.cohom}
  Notice that property \ref{P:Identity.ER} is stricter than \ref{P:Identity.ERW} since it requires $\Red{i}$ to be a left inverse of $\Ext{i}$ on the entire space $W_i$ and not only on $\Kernel \partial_i$.
  Accounting for this remark and invoking again \cite[Proposition~2]{Di-Pietro.Droniou:23*1}, it is easy to see that the cohomologies of $(V_i, d_i)_i$ and $(W_i, \partial_i)_i$ are isomorphic. As noticed above, the latter is, in turn, isomorphic to the cohomology of $(\widehat{W}_i,\widehat{\partial_i})_i$.
\end{remark}

The complex $(V_i, d_i)_i$ can be illustrated by the discrete rot-rot complex \eqref{eq:d.rotrot.complex} or the discrete Stokes complex \eqref{eq:d.stokes.complex}, respectively discussed in Sections~\ref{sec:rotrot} and~\ref{sec:stokes} below.

\begin{lemma}[Decomposition of $V_i$]\label{def.v}
  Assume \ref{P:Identity.ER} and let
  \begin{equation}\label{eq:Ci}
    C_i\coloneq\Kernel\Red{i}.
  \end{equation}
  Then, we have the following direct decomposition:
  \begin{equation}\label{eq:def.v}
    V_i = \Ext{i}W_i \oplus C_i.
  \end{equation}
  Under assumption \ref{P:E.R.cochain}, this decomposition is compatible with $d_i$, in the sense that
  \begin{equation}\label{eq:compatibility}
    \text{%
      $d_i \Ext{i}W_i \subset \Ext{i+1}W_{i+1}$ and
      $d_i C_i \subset C_{i+1}$.%
    }
  \end{equation}
\end{lemma}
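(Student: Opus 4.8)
The plan is to prove Lemma~\ref{def.v} in two independent pieces: first the direct-sum decomposition \eqref{eq:def.v}, which uses only \ref{P:Identity.ER}, and then the $d_i$-compatibility \eqref{eq:compatibility}, which additionally invokes \ref{P:E.R.cochain}.

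\textbf{Step 1: the decomposition \eqref{eq:def.v}.} The key observation is that \ref{P:Identity.ER}, namely $\Red{i}\Ext{i}=\Id_{W_i}$, makes $P_i\coloneq\Ext{i}\Red{i}$ an idempotent on $V_i$. Indeed $P_i^2=\Ext{i}(\Red{i}\Ext{i})\Red{i}=\Ext{i}\Red{i}=P_i$. A projector induces a direct decomposition $V_i=\Image(P_i)\oplus\Kernel(P_i)$, so I would show these two summands are exactly $\Ext{i}W_i$ and $C_i$. For the image, clearly $\Image(P_i)\subset\Ext{i}W_i$; conversely, for $w\in W_i$ we have $P_i(\Ext{i}w)=\Ext{i}\Red{i}\Ext{i}w=\Ext{i}w$ by \ref{P:Identity.ER}, so $\Ext{i}W_i\subset\Image(P_i)$. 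For the kernel, $P_i v=0$ means $\Ext{i}(\Red{i}v)=0$; applying $\Red{i}$ and using \ref{P:Identity.ER} gives $\Red{i}v=0$, i.e.\ $v\in C_i=\Kernel\Red{i}$, and conversely $v\in C_i$ gives $P_i v=\Ext{i}(\Red{i}v)=0$. Hence $\Kernel(P_i)=C_i$. Concretely, every $v\in V_i$ splits as $v=\Ext{i}\Red{i}v+(v-\Ext{i}\Red{i}v)$ with the first term in $\Ext{i}W_i$ and, since $\Red{i}(v-\Ext{i}\Red{i}v)=\Red{i}v-\Red{i}v=0$, the second term in $C_i$; the sum is direct because $\Ext{i}w\in C_i$ forces $0=\Red{i}\Ext{i}w=w$.

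\textbf{Step 2: compatibility \eqref{eq:compatibility}.} The first inclusion is immediate from the second commutation relation in \ref{P:E.R.cochain}: for $w\in W_i$, $d_i\Ext{i}w=\Ext{i+1}\partial_i w\in\Ext{i+1}W_{i+1}$. For the second inclusion, take $v\in C_i$, so $\Red{i}v=0$, and use the first commutation relation $\Red{i+1}d_i=\partial_i\Red{i}$ to compute $\Red{i+1}(d_i v)=\partial_i\Red{i}v=\partial_i 0=0$, whence $d_i v\in\Kernel\Red{i+1}=C_{i+1}$.

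I do not expect any genuine obstacle here: the statement is a standard projector-splitting argument, and the main thing to be careful about is keeping the two commutation identities in \ref{P:E.R.cochain} straight (which one maps reductions down and which pushes extensions up) so that each inclusion in \eqref{eq:compatibility} is matched with the correct relation. The only point worth flagging is that the directness of the sum relies on the full-space identity $\Red{i}\Ext{i}=\Id_{W_i}$ rather than merely its restriction to $\Kernel\partial_i$, which is precisely the strengthening recorded in Remark~\ref{rem:iso.cohom}.
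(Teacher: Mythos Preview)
Your proof is correct. The compatibility argument in Step~2 is essentially identical to the paper's. For Step~1, however, you take a genuinely different route: the paper argues by dimension counting---using that $\Red{i}$ is surjective and $\Ext{i}$ injective (both consequences of \ref{P:Identity.ER}), the rank--nullity theorem gives $\dim C_i+\dim(\Ext{i}W_i)=\dim V_i$, and this is combined with a direct verification that $\Ext{i}W_i\cap C_i=\{0\}$. Your idempotent argument, by contrast, observes that $P_i=\Ext{i}\Red{i}$ is a projector and reads off the decomposition from $\Image(P_i)\oplus\Kernel(P_i)$. Your approach is slightly cleaner and, notably, does not require the spaces to be finite-dimensional, which the paper's dimension count tacitly assumes; on the other hand, the paper's argument makes the dimension of $C_i$ explicit, which is convenient when one later wants to count degrees of freedom in the serendipity construction.
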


\begin{proof}
  
  By \ref{P:Identity.ER}, $\Red{i}$ is surjective and $\Ext{i}$ is injective.
  As a consequence of the latter property, $|W_i|=|\Ext{i}W_i|$, where $| \cdot |$ denotes here the dimension of a vector space.
  By the rank-nullity theorem, we can also write $|C_i|= |V_i| - |\Image(\Red{i})| =|V_i|-|W_i|$, where the conclusion follows from the surjectivity of $\Red{i}$.
  Thus, $|C_i| + |\Ext{i}W_i|=|V_i|-|W_i|+|W_i|=|V_i|$, and this gives
  \[ %% \begin{equation}\label{eq:def.v:sum}
  V_i=\Ext{i}W_i + C_i,
  \] %% \end{equation}
    thus proving \eqref{eq:def.v}.
  
  Let us now prove that the sum in the above expression is direct. To this purpose, let $v\in \Ext{i}W_i \cap C_i$.
  Since $v \in C_i$, $\Red{i} v =0$.
  Since $v \in \Ext{i}W_i$, on the other hand, $v$ can be written as $\Ext{i}v_w$ for some $v_w \in W_i$, so $\Red{i}\Ext{i}v_w=0$.
  By \ref{P:Identity.ER}, $v_w=0$, so $v=\Ext{i}0=0$ (since $\Ext{i}$ is linear). As a result,
  \[ %% \begin{equation}\label{eq:def.v:direct.sum}
  \Ext{i}W_i \cap C_i=\{0\}.
  \] %% \end{equation}
  
  Now, $\Red{i+1} d_i C_i \overset{\ref{P:E.R.cochain}}= \partial_i\Red{i} C_i \overset{\eqref{eq:Ci}}= 0$, giving that $d_i C_i \subset C_{i+1}$. On the other hand, $d_i \Ext{i} W_i \overset{\ref{P:E.R.cochain}} = \Ext{i+1} \partial_i W_i$, hence $d_i \Ext{i} W_i \subset \Ext{i+1} W_{i+1}$.
  This concludes the proof of \eqref{eq:compatibility}.
\end{proof}

\subsection{Construction of a serendipity complex with enhanced regularity}

The goal of this section is to construct a new complex $(\widehat{V}_i,\widehat{d}_i)$ with operators \[
\EX{V}{i}\st\widehat{V}_i\to V_i, \quad
\ReW{V}{i}\st V_i\to\widehat{V}_i, \quad
\sExt{i}\st\widehat{W}_i\to\widehat{V}_i, \quad
\sRed{i}\st\widehat{V}_i\to\widehat{W}_i,
\]
that verify conditions similar to the ones in Assumptions \ref{Isom.W.W} and \ref{Isom.W.V}, so that $(\widehat{V}_i,\widehat{d}_i)$ has the same cohomology as the three other complexes.
The construction is illustrated in the following diagram:
\begin{equation}\label{eq:abstract.diagram}
  \begin{tikzpicture}[xscale=2,yscale=2,baseline={(Middle.base)}]
    \node (SVi) at (0,0,0) {$\widehat{V}_i$};
    \node (SVi1) at (2,0,0) {$\widehat{V}_{i+1}$};

    \node (Vi) at (0,0,-2) {$V_i$};
    \node (Vi1) at (2,0,-2) {$V_{i+1}$};

    \node (SWi) at (0,2,0) {$\widehat{W}_i$};
    \node (SWi1) at (2,2,0) {$\widehat{W}_{i+1}$};

    \node (Wi) at (0,2,-2) {$W_i$};
    \node (Wi1) at (2,2,-2) {$W_{i+1}$};

    \node (Middle) at (0,1,0) {};

    \draw [->,>=latex] (SVi) -- (SVi1) node[midway, above, font=\small] {$\widehat{d}_i$};
    \draw [->,>=latex] (Vi) -- (Vi1) node[midway, above, font=\small] {$d_i$};
    \draw [->,>=latex] (SWi) -- (SWi1) node[midway, above, font=\small] {$\widehat{\partial}_i$};
    \draw [->,>=latex] (Wi) -- (Wi1) node[midway, above, font=\small] {$\partial_i$};

    \draw [->,>=latex] (SVi)to [bend right=20] node[pos=0.8, right, font=\small] {$\EX{V}{i}$} (Vi);
    \draw [->,dashed] (Vi)to [bend right=20] node[pos=0.5, left, font=\small] {$\ReW{V}{i}$} (SVi);
    \draw [->,>=latex] (SVi1)to [bend right=20] node[pos=0.7, right, font=\small] {$\EX{V}{i+1}$} (Vi1);
    \draw [->,dashed] (Vi1)to [bend right=20] node[pos=0.5, left, font=\small] {$\ReW{V}{i+1}$} (SVi1);

    \draw [->,>=latex] (SWi)to [bend right=20] node[pos=0.8, right, font=\small] {$\EX{W}{i}$} (Wi);
    \draw [->,dashed] (Wi)to [bend right=20] node[pos=0.5, left, font=\small] {$\ReW{W}{i}$} (SWi);
    \draw [->,>=latex] (SWi1)to [bend right=20] node[pos=0.7, right, font=\small] {$\EX{W}{i+1}$} (Wi1);
    \draw [->,dashed] (Wi1)to [bend right=20] node[pos=0.5, left, font=\small] {$\ReW{W}{i+1}$} (SWi1);

    \draw [->,>=latex] (Wi) to [bend left=15] node[pos=0.6, right,font=\small] {$\Ext{i}$} (Vi);
    \draw [->,dashed] (Vi) to [bend left=15] node[pos=0.4, left,font=\small] {$\Red{i}$} (Wi);

    \draw [->,>=latex] (Wi1) to [bend left=15] node[pos=0.6, right,font=\small] {$\Ext{i+1}$} (Vi1);
    \draw [->,dashed] (Vi1) to [bend left=15] node[pos=0.4, left,font=\small] {$\Red{i+1}$} (Wi1);

    \draw [->,>=latex] (SWi) to [bend left=15] node[pos=0.4, right,font=\small] {$\sExt{i}$} (SVi);
    \draw [->,dashed] (SVi) to [bend left=15] node[pos=0.6, left,font=\small] {$\sRed{i}$} (SWi);

    \draw [->,>=latex] (SWi1) to [bend left=15] node[pos=0.4, right,font=\small] {$\sExt{i+1}$} (SVi1);
    \draw [->,dashed] (SVi1) to [bend left=15] node[pos=0.6, left,font=\small] {$\sRed{i+1}$} (SWi1);
    \draw [->,dashed] (-1,0,0) -- (SVi);
    \draw [->,dashed] (-1,2,0) -- (SWi);
    \draw [->,dashed] (-1,0,-2) -- (Vi);
    \draw [->,dashed] (-1,2,-2) -- (Wi);
    \draw [->,dashed] (SVi1) -- (3,0,0);
    \draw [->,dashed] (SWi1) -- (3,2,0);
    \draw [->,dashed] (Vi1) -- (3,0,-2);
    \draw [->,dashed] (Wi1) -- (3,2,-2);
  \end{tikzpicture}
\end{equation}

By Lemma \ref{def.v}, a generic element $v \in V_i$ can be written as $v = \Ext{i}v_w + v_c$ with $(v_w, v_c) \in W_i \times C_i$. We introduce the projector $\Pi_{C_i}$ onto $C_i$  such that, for any 
$v = \Ext{i} v_w + v_c$,
\begin{equation}\label{eq:def.Pi}
  \Pi_{C_i} v \coloneq v_c.
\end{equation}
Notice that, by definition,
\begin{equation}\label{eq:ker.Pi}
  \Pi_{C_i} \Ext{i} = 0.
\end{equation}
In addition, using the compatibility expressed by \eqref{eq:compatibility}, 
\begin{equation}\label{eq:d.Pi}
  \Pi_{C_{i+1}} d_i v = d_i \Pi_{C_i}v,
\end{equation}
as can be checked writing
$\Pi_{C_{i+1}} d_i v = \Pi_{C_{i+1}} d_i (\Ext{i}v_w + v_c) = \Pi_{C_{i+1}} (d_i \Ext{i}v_w + d_i v_c) \overset{\eqref{eq:compatibility}}= d_i v_c \overset{\eqref{eq:def.Pi}}= d_i \Pi_{C_i}v $.

\begin{definition}[Complex $(\widehat{V}_i, \widehat{d}_i)$, extension and reduction operators]\label{def:serendipity.complex}
  The spaces and differential of the new complex are respectively given by
  \begin{equation}\label{eq:def.v^}
    \widehat{V}_i\coloneq \Big\{\widehat{v} = \big(\widehat{v}_{w},\widehat{v}_{c} \big)\st
    \text{$\widehat{v}_{w}\in\widehat{W}_i$ and $\widehat{v}_{c}\in C_i$}
    \Big\},
  \end{equation}
  and
  \begin{equation}\label{eq:def.d}
    \widehat{d}_i\widehat{v} \coloneq (\widehat{\partial}_i\widehat{v}_w, d_i\widehat{v}_c)~\text{for all} ~\widehat{v}= (\widehat{v}_w,\widehat{v}_c) \in \widehat{V}_i.
  \end{equation}
  The operators
  $\sExt{i}\st\widehat{W}_i\to\widehat{V}_i$,
  $\sRed{i}\st\widehat{V}_i\to\widehat{W}_i$,
  $\EX{V}{i}\st\widehat{V}_i\to V_i$, and
  $\ReW{V}{i}\st V_i\to\widehat{V}_i$
  relating this new complex to $(\widehat{W}_i,\widehat{\partial}_i)_i$ and $(V_i,d_i)_i$, respectively, are defined as follows:
  \begin{subequations}\label{eq:construction}
    \begin{alignat}{2}\label{eq:def.sExt}
      &\sExt{i} \widehat{v}_w &&\coloneq (\widehat{v}_w,0) ~ \text{for all}~ \widehat{v}_w \in \widehat{W}_i,\\ \label{eq:def.sRed}
      &\sRed{i} \widehat{v}&&\coloneq\widehat{v}_{w} ~\text{for all}~  \widehat{v} =(\widehat{v}_{w},\widehat{v}_{c}) \in \widehat{V}_i,\\ \label{eq:def.EX}
      &\EX{V}{i} \widehat{v} &&\coloneq \Ext{i}\EX{W}{i}\widehat{v}_w+\widehat{v}_c ~\text{for all}~ \widehat{v} =(\widehat{v}_{w},\widehat{v}_{c}) \in \widehat{V}_i,\\ \label{eq:def.ReV}
      &\ReW{V}{i} v&&\coloneq(\ReW{W}{i}\Red{i}v,\Pi_{C_i} v) ~\text{for all} ~v \in V_i.
    \end{alignat}
  \end{subequations}
\end{definition}

\begin{lemma}[Commutation properties]\label{comm.prop}
  Under Assumptions \ref{Isom.W.W} and \ref{Isom.W.V}, the operators defined by \eqref{eq:construction} satisfy the following relations:
  \begin{subequations}
    \begin{alignat}{2}\label{eq:RW.RV1}
      \widehat{R}_{W_i}\mathcal{R}_i
      &=
      \widehat{\mathcal{R}}_i\widehat{R}_{V_i},\\\label{eq:RW.RV2}
      \widehat{\mathcal{E}}_i \widehat{R}_{W_i}
      &=
      \widehat{R}_{V_i} \mathcal{E}_i,\\ \label{eq:EW.EV1}
      E_{W_i}\widehat{\mathcal{R}}_i
      &=
      \mathcal{R}_i E_{V_i}, \\ \label{eq:EW.EV2}
      \mathcal{E}_i E_{W_i}
      &=
      E_{V_i} \widehat{\mathcal{E}}_i, \\ \label{eq:part.d}
      \widehat{\partial_i}\sRed{i}
      &=
      \sRed{i+1}\widehat{d}_i.
    \end{alignat}
  \end{subequations}
\end{lemma}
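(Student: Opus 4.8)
The plan is to establish each of the five identities directly, by evaluating both sides on a generic element and substituting the definitions \eqref{eq:construction} of the four new operators. The only ingredients beyond these definitions are three structural facts already at hand: the left-inverse property \ref{P:Identity.ER}, the defining relation \eqref{eq:Ci} for $C_i$, and the projector identity \eqref{eq:ker.Pi}. Throughout, the one thing demanding attention is to track which of the two components of an element $\widehat{v} = (\widehat{v}_w, \widehat{v}_c) \in \widehat{V}_i$ each operator acts on.

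Three of the identities follow from the definitions alone. For \eqref{eq:part.d}, since $\sRed{i}$ extracts the $\widehat{W}$-component and $\widehat{d}_i$ acts as $(\widehat{\partial}_i\widehat{v}_w, d_i\widehat{v}_c)$ by \eqref{eq:def.d}, both $\widehat{\partial}_i\sRed{i}\widehat{v}$ and $\sRed{i+1}\widehat{d}_i\widehat{v}$ reduce to $\widehat{\partial}_i\widehat{v}_w$. For \eqref{eq:RW.RV1}, applying $\ReW{V}{i}$ to $v \in V_i$ and reading off the first component with $\sRed{i}$ returns $\ReW{W}{i}\Red{i}v$, matching the left-hand side. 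For \eqref{eq:EW.EV2}, the pair $\sExt{i}\widehat{v}_w = (\widehat{v}_w, 0)$ has vanishing $C_i$-part, so $\EX{V}{i}\sExt{i}\widehat{v}_w = \Ext{i}\EX{W}{i}\widehat{v}_w$ by \eqref{eq:def.EX}.

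The two remaining identities are the only ones that use the hypotheses, and are where I would concentrate the argument. For \eqref{eq:RW.RV2}, I would evaluate $\ReW{V}{i}\Ext{i}$ on $w \in W_i$, obtaining the pair $(\ReW{W}{i}\Red{i}\Ext{i}w, \Pi_{C_i}\Ext{i}w)$; the first component collapses to $\ReW{W}{i}w$ by \ref{P:Identity.ER}, while the second vanishes by \eqref{eq:ker.Pi}, so the result equals $(\ReW{W}{i}w, 0) = \sExt{i}\ReW{W}{i}w$. For \eqref{eq:EW.EV1}, I would apply $\Red{i}\EX{V}{i}$ to $\widehat{v} = (\widehat{v}_w, \widehat{v}_c)$ and split by linearity as $\Red{i}\Ext{i}\EX{W}{i}\widehat{v}_w + \Red{i}\widehat{v}_c$; the first summand is $\EX{W}{i}\widehat{v}_w$ again by \ref{P:Identity.ER}, and the second is zero since $\widehat{v}_c \in C_i = \Kernel\Red{i}$, which leaves $\EX{W}{i}\widehat{v}_w = \EX{W}{i}\sRed{i}\widehat{v}$.

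Since each step consists of a substitution followed by a single use of $\Red{i}\Ext{i} = \Id_{W_i}$ or of $\Pi_{C_i}\Ext{i} = 0$, I do not anticipate a genuine obstacle; the compatibility relations \eqref{eq:compatibility} and \eqref{eq:d.Pi} underpin the well-definedness of $\widehat{d}_i$ but play no role in these five identities. The only point requiring care is the componentwise bookkeeping in $\widehat{V}_i$, ensuring that the $\widehat{W}$- and $C_i$-parts of each side are matched correctly.
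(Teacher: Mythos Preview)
Your proposal is correct and follows essentially the same approach as the paper: each identity is verified by direct substitution of the definitions \eqref{eq:construction}, with \eqref{eq:RW.RV2} and \eqref{eq:EW.EV1} being the only ones that require \ref{P:Identity.ER} together with $\Red{i}\widehat{v}_c = 0$ (equivalently \eqref{eq:ker.Pi}). Your observation that the cochain properties \ref{P:E.R.W.cochain}, \ref{P:E.R.cochain} and the compatibility \eqref{eq:compatibility} play no role here is also consistent with the paper's argument.
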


\begin{proof}
  \underline{(i) \emph{Proof of \eqref{eq:RW.RV1}.}}
  For all $v \in V_i$, we have
  \[
  \sRed{i}\ReW{V}{i}v \overset{\eqref{eq:def.ReV}}=\sRed{i}(\ReW{W}{i}\Red{i}v,\Pi_{C_i} v)\overset{\eqref{eq:def.sRed}}= \ReW{W}{i}\Red{i}v.
  \]
  \underline{(ii) \emph{Proof of \eqref{eq:RW.RV2}.}}
  For all $v_w \in W_i$, it holds
  \begin{equation*}
    \ReW{V}{i}\Ext{i}v_w
    \overset{\eqref{eq:def.ReV}}= (\ReW{W}{i}\Red{i}\Ext{i}v_w,\Pi_{C_i}\Ext{i}v_w)
    \overset{\ref{P:Identity.ER},\,\eqref{eq:ker.Pi}}= (\ReW{W}{i}v_w,0)
    \overset{\eqref{eq:def.sExt}}=\sExt{i}\ReW{W}{i}v_w.
  \end{equation*}
  \underline{(iii) \emph{Proof of \eqref{eq:EW.EV1}.}}
  For all $\widehat{v} = (\widehat{v}_w,\widehat{v}_c) \in\widehat{V}_i$, we have:
  \begin{equation*}
    \EX{W}{i}\sRed{i}\widehat{v}
    %% \overset{\eqref{eq:def.v^}}&=\EX{W}{i}(\sRed{i}(\widehat{v}_w,\widehat{v}_c)) \\
    \overset{\eqref{eq:def.sRed}}=
    \EX{W}{i}\widehat{v}_w
    \overset{\ref{P:Identity.ER}}=
    \Red{i}\Ext{i}\EX{W}{i}\widehat{v}_w + \Red{i}\widehat{v}_c
    = \Red{i}(\Ext{i}\EX{W}{i}\widehat{v}_w + \widehat{v}_c)
    \overset{\eqref{eq:def.EX}}=
    %% \Red{i}\EX{V}{i}(\widehat{v}_w,\widehat{v}_c)
    %% \overset{\eqref{eq:def.v^}}=
    \Red{i}\EX{V}{i}\widehat{v},
  \end{equation*}
  where we have additionally used the fact that $\widehat{v}_c \in C_i$ to add $\Red{i} \widehat{v}_c = 0$ in the right-hand side of the second equality and the linearity of $\Red{i} $ in the third equality.%%
  \medskip\\
  \underline{(iv) \emph{Proof of \eqref{eq:EW.EV2}.}} 
  For all $\widehat{v}_w \in \widehat{W}_i$, we can write
  \begin{equation*}
    \EX{V}{i} \sExt{i} \widehat{v}_w 
    \overset{\eqref{eq:def.sExt}}=
    \EX{V}{i}(\widehat{v}_w,0) 
    \overset{\eqref{eq:def.EX}}=
    \Ext{i}\EX{W}{i} \widehat{v}_w .
  \end{equation*}
  \underline{(v) \emph{Proof of \eqref{eq:part.d}.}}
  For all $\widehat v =(\widehat{v}_w,\widehat{v}_c) \in \widehat{V}_i$, we have:
  \begin{equation*}
    \widehat{\partial}_i\sRed{i} \widehat v
    %% \overset{\eqref{eq:def.v^}}= \widehat{\partial}_i\sRed{i}(\widehat{v}_w,\widehat{v}_c) 
    \overset{\eqref{eq:def.sRed}}= \widehat{\partial}_i \widehat{v}_w
    \overset{\eqref{eq:def.sRed}}= \sRed{i+1}(\widehat{\partial}_i \widehat{v}_w, d_i \widehat{v}_c) 
    \overset{\eqref{eq:def.d}}= \sRed{i+1} \widehat{d}_i\widehat v.\qedhere
  \end{equation*}
\end{proof}

\begin{theorem}[Homological properties for $(V_i,d_i)_i$ and $(\widehat{V}_i,\widehat{d}_i)$]
  Under Assumptions \ref{Isom.W.W} and \ref{Isom.W.V}, the operators $\ReW{V}{i}$ and $\EX{V}{i}$ satisfy the following properties: 
  \begin{subequations}\label{lem:serV.prop}
    \begin{gather}
      (\ReW{V}{i}\EX{V}{i})_{|\Kernel\widehat{d}_i} = \Id_{\Kernel\widehat{d}_i},
      \label{eq:serV.prop.1}\\
      (\EX{V}{i+1}\ReW{V}{i+1}-\Id_{V_{i+1}})(\Kernel d_{i+1}) \subset \Image(d_i),
      \label{eq:serV.prop.2}\\
      \ReW{V}{i+1} d_i=\widehat{d}_i\ReW{V}{i}\text{ and }\EX{V}{i+1} \widehat{d}_i=d_i\EX{V}{i}.
      \label{eq:serV.prop.3}
    \end{gather}
  \end{subequations}
\end{theorem}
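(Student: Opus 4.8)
The three asserted relations are exactly the analogues of Assumption~\ref{Isom.W.V} (equivalently of Assumption~\ref{Isom.W.W}) for the pair of complexes $(V_i,d_i)_i$ and $(\widehat{V}_i,\widehat{d}_i)_i$ linked by $\EX{V}{i}$ and $\ReW{V}{i}$; once established, Remark~\ref{rem:iso.cohom} applied to this pair yields the claimed isomorphism of cohomologies. The plan is to verify them in the order \eqref{eq:serV.prop.3}, \eqref{eq:serV.prop.1}, \eqref{eq:serV.prop.2}, treating the cochain and left-inverse properties as direct (if bookkeeping-heavy) substitutions and reserving the real work for the complement property \eqref{eq:serV.prop.2}.

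For \eqref{eq:serV.prop.3} I would unfold the definitions and chain the hypotheses componentwise. In the identity $\ReW{V}{i+1}d_i = \widehat{d}_i\ReW{V}{i}$, the first ($\widehat{W}$-valued) component of both sides reduces to $\widehat{\partial}_i\ReW{W}{i}\Red{i}v$ after combining \ref{P:E.R.cochain} (to rewrite $\Red{i+1}d_i$ as $\partial_i\Red{i}$) with \ref{P:E.R.W.cochain} (to rewrite $\ReW{W}{i+1}\partial_i$ as $\widehat{\partial}_i\ReW{W}{i}$), while the $C$-valued component matches thanks to the commutation \eqref{eq:d.Pi} of $\Pi_{C_i}$ with $d_i$. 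The companion identity $\EX{V}{i+1}\widehat{d}_i = d_i\EX{V}{i}$ is handled the same way, now reading \ref{P:E.R.W.cochain} and \ref{P:E.R.cochain} in the extension direction. For \eqref{eq:serV.prop.1}, I would take $\widehat{v} = (\widehat{v}_w,\widehat{v}_c)\in\Kernel\widehat{d}_i$, which by \eqref{eq:def.d} means $\widehat{v}_w\in\Kernel\widehat{\partial}_i$ and $\widehat{v}_c\in\Kernel d_i$, and then evaluate the two components of $\ReW{V}{i}\EX{V}{i}\widehat{v}$ separately: the first collapses to $\ReW{W}{i}\EX{W}{i}\widehat{v}_w$ using \ref{P:Identity.ER} and $\Red{i}\widehat{v}_c = 0$ (as $\widehat{v}_c\in C_i$), and then equals $\widehat{v}_w$ by \ref{P:Identity.ERW} since $\widehat{v}_w\in\Kernel\widehat{\partial}_i$; the second collapses to $\widehat{v}_c$ using \eqref{eq:ker.Pi} together with the fact that $\Pi_{C_i}$ fixes $C_i$.

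The crux is \eqref{eq:serV.prop.2}. The guiding observation is that the decomposition of Lemma~\ref{def.v} can be written as the pointwise identity $v = \Ext{i+1}\Red{i+1}v + \Pi_{C_{i+1}}v$ for every $v\in V_{i+1}$ (apply $\Red{i+1}$ and $\Pi_{C_{i+1}}$ to $v=\Ext{i+1}v_w+v_c$ and use \ref{P:Identity.ER}, \eqref{eq:Ci}, \eqref{eq:def.Pi}). Feeding this, \eqref{eq:def.ReV}, and \eqref{eq:def.EX} into $\EX{V}{i+1}\ReW{V}{i+1}v - v$ makes the two copies of $\Pi_{C_{i+1}}v$ cancel, leaving the purely $W$-level expression $\Ext{i+1}\bigl(\EX{W}{i+1}\ReW{W}{i+1}-\Id_{W_{i+1}}\bigr)\Red{i+1}v$. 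Next I would check that $\Red{i+1}v\in\Kernel\partial_{i+1}$ whenever $v\in\Kernel d_{i+1}$, which follows from \ref{P:E.R.cochain} since $\partial_{i+1}\Red{i+1}v = \Red{i+2}d_{i+1}v = 0$; this is exactly the kernel condition needed to invoke \ref{P:complex.ERW}, yielding $\bigl(\EX{W}{i+1}\ReW{W}{i+1}-\Id_{W_{i+1}}\bigr)\Red{i+1}v = \partial_i w$ for some $w\in W_i$. Finally, transporting through the extension with \ref{P:E.R.cochain} gives $\Ext{i+1}\partial_i w = d_i\Ext{i}w\in\Image(d_i)$, which is the desired conclusion. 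The only genuine subtlety to watch is the cancellation step: it relies on expressing the decomposition as the explicit splitting identity above, and on confirming the kernel membership of $\Red{i+1}v$ before applying \ref{P:complex.ERW}.
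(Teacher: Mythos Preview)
Your proposal is correct and follows essentially the same approach as the paper's proof: both unfold the definitions componentwise, using \ref{P:E.R.cochain}, \ref{P:E.R.W.cochain}, and \eqref{eq:d.Pi} for the cochain properties, \ref{P:Identity.ER} and \ref{P:Identity.ERW} for the left-inverse property, and the key reduction of $\EX{V}{i+1}\ReW{V}{i+1}v-v$ to $\Ext{i+1}(\EX{W}{i+1}\ReW{W}{i+1}-\Id)\Red{i+1}v$ followed by \ref{P:complex.ERW} and \ref{P:E.R.cochain} for \eqref{eq:serV.prop.2}. The only cosmetic differences are the order of the parts and your use of the explicit splitting identity $v=\Ext{i+1}\Red{i+1}v+\Pi_{C_{i+1}}v$ in place of the paper's decomposition $v=\Ext{i+1}v_w+v_c$ with the subsequent identification $v_w=\Red{i+1}v$.
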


\begin{proof}
  \underline{(i) \emph{Proof of \eqref{eq:serV.prop.1}.}}
  Let $\widehat{v}=(\widehat{v}_w,\widehat{v}_c) \in \Kernel \widehat{d}_i$.
  We have
  \[
  \begin{aligned}
    \ReW{V}{i}\EX{V}{i}(\widehat{v}_w,\widehat{v}_c) 
    \overset{\eqref{eq:def.EX}}&=
    \ReW{V}{i}(\Ext{i}\EX{W}{i}\widehat{v}_w+\widehat{v}_c) \\
    \overset{\eqref{eq:def.ReV}}&=
    \big(
    \ReW{W}{i}\Red{i}\Ext{i}\EX{W}{i}\widehat{v}_w,
    \Pi_{C_i} (\Ext{i}\EX{W}{i}\widehat{v}_w+\widehat{v}_c)\big)
    \\
    \overset{\ref{P:Identity.ER},\,\eqref{eq:def.Pi}}&=
    (\ReW{W}{i}\EX{W}{i}\widehat{v}_w,\widehat{v}_c) \\
    \overset{\ref{P:Identity.ERW}}&=
    (\widehat{v}_w,\widehat{v}_c),
  \end{aligned}
  \]
  where we have used the linearity of $\Red{i}$ along with $\Red{i} \widehat{v}_c = 0$ (since $\widehat{v}_c \in C_i$) in the second equality, while the use of \ref{P:Identity.ERW} in the fourth equality is possible since $\widehat{v}_w \in \Kernel \widehat{\partial}_i$, as can be checked writing
  $\widehat{\partial_i} \widehat{v}_w \overset{\eqref{eq:def.sRed}}= \widehat{\partial_i}\sRed{i} \widehat{v} \overset{\eqref{eq:part.d}}= \sRed{i+1}\widehat{d}_i \widehat{v} = 0$, the conclusion being a consequence of $\widehat{v} \in \Kernel \widehat{d}_i$ and the linearity of $\sRed{i+1}$.
  \medskip\\
  \underline{(ii) \emph{Proof of \eqref{eq:serV.prop.2}.}}
  Let
  \[
  \text{
    $v \overset{\eqref{eq:def.v}}= \Ext{i+1} v_w + v_c \in \Kernel d_{i+1}$ with $(v_w, v_c) \in W_{i+1} \times C_{i+1}$.
  }
  \]
  We write
  \begin{equation}
    \begin{aligned}
      \EX{V}{i+1}\ReW{V}{i+1} v - v 
      &=
      \EX{V}{i+1}\ReW{V}{i+1}(\Ext{i+1} v_w+ v_c)-(\Ext{i+1} v_w+ v_c) \\
      \overset{\eqref{eq:def.ReV}}&=
      \EX{V}{i+1}\big(\ReW{W}{i+1} \Red{i+1}(\Ext{i+1} v_w+v_c),\Pi_{C_i}(\Ext{i+1} v_w+ v_c)\big) - (\Ext{i+1} v_w+ v_c) \\
      \overset{\eqref{eq:def.Pi}}&=
      \EX{V}{i+1}(\ReW{W}{i+1} \Red{i+1}\Ext{i+1} v_w,v_c) - (\Ext{i+1} v_w+ v_c) \\
      \overset{\ref{P:Identity.ER}}&=
      \EX{V}{i+1}(\ReW{W}{i+1} v_w, v_c) - (\Ext{i+1} v_w+ v_c) \\
      \overset{\eqref{eq:def.EX}}&=
      \Ext{i+1} \EX{W}{i+1}\ReW{W}{i+1} v_w+ v_c- (\Ext{i+1} v_w+ v_c) \\
      &=
      \Ext{i+1} (\EX{W}{i+1}\ReW{W}{i+1} v_w- v_w),
    \end{aligned}
    \label{eq:serV.prop.2.P1}
  \end{equation}
  where, in the third equality, we have additionally used the fact that $\Red{i+1} v_c = 0$ since $v_c \in C_{i+1}$.
  We next notice that $\Red{i+1}v = \Red{i+1}(\Ext{i+1}v_w+v_c) = \Red{i+1}\Ext{i+1}v_w\overset{\ref{P:Identity.ER}}=v_w$.
  This implies, in turn, $\partial_{i+1}v_w=\partial_{i+1}\Red{i+1}v\overset{\ref{P:E.R.cochain}}=\Red{i+2}d_{i+1}v=\Red{i+2}0=0$ since $v\in\Kernel d_{i+1}$ and $\Red{i+2}$ is linear by definition, giving that $v_w\in\Kernel \partial_i$.
  We can therefore use Assumption \ref{P:complex.ERW} on $\EX{W}{i+1}\ReW{W}{i+1} v_w- v_w$ in \eqref{eq:serV.prop.2.P1} to infer the existence of $q\in W_i$ such that
  \begin{equation*}
    \EX{V}{i+1}\ReW{V}{i+1} v - v 
    =\Ext{i+1} \partial_i q
    \overset{\ref{P:E.R.cochain}}= d_i \Ext{i} q
    \in \Image(d_i).
  \end{equation*}
  \underline{(iii) \emph{Proof of \eqref{eq:serV.prop.3}.}}
  For all $v \in V_i$, we have
  \begin{equation*}
    \begin{aligned}
      \ReW{V}{i+1} d_i v
      \overset{\eqref{eq:def.ReV}}&=
      ( \ReW{W}{i+1} \Red{i+1} d_i v, \Pi_{C_{i+1}}d_i v ) \\
      \overset{\ref{P:E.R.cochain}}&=
      ( \ReW{W}{i+1}\partial_i \Red{i} v ,\Pi_{C_{i+1}}d_i v) \\
      \overset{\ref{P:E.R.W.cochain},~\eqref{eq:d.Pi}}&=
      ( \widehat{\partial}_i \ReW{W}{i} \Red{i}v, d_i \Pi_{C_i} v ) \\
      \overset{\eqref{eq:def.d}}&=
      \widehat{d}_i (\ReW{W}{i} \Red{i} v , \Pi_{C_i} v ) \\
      \overset{\eqref{eq:def.ReV}}&=\widehat{d}_i \ReW{V}{i} v.
    \end{aligned}
  \end{equation*}
  For all $\widehat{v}=(\widehat{v}_w,\widehat{v}_c) \in \widehat{V}_i$, on the other hand, we have:
  \[
  \begin{aligned}
    \EX{V}{i+1} \widehat{d}_i \widehat{v}
    \overset{\eqref{eq:def.d}}&= \EX{V}{i+1} (\widehat{\partial}_i \widehat{v}_w, d_i\widehat{v}_c )
    \\
    \overset{\eqref{eq:def.EX}}&= \Ext{i+1} \EX{W}{i+1} \widehat{\partial}_i \widehat{v}_w+ d_i \widehat{v}_c
    \\
    \overset{\ref{P:E.R.W.cochain},\,\ref{P:E.R.cochain}}&=  d_i \Ext{i} \EX{W}{i} \widehat{v}_w+ d_i \widehat{v}_c
    \\
    \overset{\eqref{eq:def.EX}}&= d_i \EX{V}{i} (\widehat{v}_w, \widehat{v}_c),
  \end{aligned}
  \]
where the conclusion additionally uses the linearity of $d_i$.
\end{proof}

\begin{corollary}[Isomorphism in cohomology]\label{Isom.V.V}
  Under Assumptions \ref{Isom.W.W} and \ref{Isom.W.V}, the cohomologies of all the complexes in diagram \eqref{eq:abstract.diagram} are isomorphic.
\end{corollary}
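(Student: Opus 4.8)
The plan is to deduce the statement purely by transitivity of the ``isomorphic cohomology'' relation, assembling three pairwise isomorphisms between consecutive complexes in the diagram \eqref{eq:abstract.diagram}. Two of these isomorphisms are already on record in the excerpt, and the third is an immediate consequence of the homological properties \eqref{eq:serV.prop.1}--\eqref{eq:serV.prop.3} established in the preceding theorem. No new computation is required: the entire analytic content has already been discharged, and the corollary is a bookkeeping argument identifying which earlier result applies to which pair of complexes.

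Concretely, I would first recall the two isomorphisms on the ``$W$--side''. Assumption~\ref{Isom.W.W}, through \cite[Proposition~2]{Di-Pietro.Droniou:23*1}, yields that the cohomologies of $(W_i,\partial_i)_i$ and $(\widehat{W}_i,\widehat{\partial}_i)_i$ are isomorphic; and Remark~\ref{rem:iso.cohom}, invoking the same proposition together with the fact that \ref{P:Identity.ER} is stronger than \ref{P:Identity.ERW}, gives that the cohomologies of $(V_i,d_i)_i$ and $(W_i,\partial_i)_i$ are isomorphic. The decisive observation is then that the three relations \eqref{eq:serV.prop.1}, \eqref{eq:serV.prop.2}, and \eqref{eq:serV.prop.3} are, after the relabelling $W\rightsquigarrow V$, $\widehat{W}\rightsquigarrow\widehat{V}$, $\partial\rightsquigarrow d$, $\widehat{\partial}\rightsquigarrow\widehat{d}$, $\EX{W}{i}\rightsquigarrow\EX{V}{i}$, $\ReW{W}{i}\rightsquigarrow\ReW{V}{i}$, exactly conditions \ref{P:Identity.ERW}, \ref{P:complex.ERW}, and \ref{P:E.R.W.cochain} of Assumption~\ref{Isom.W.W}, now read for the pair $(V_i,d_i)_i$ and $(\widehat{V}_i,\widehat{d}_i)_i$ with extension $\EX{V}{i}$ and reduction $\ReW{V}{i}$. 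A single further application of \cite[Proposition~2]{Di-Pietro.Droniou:23*1} to this pair therefore shows that the cohomologies of $(\widehat{V}_i,\widehat{d}_i)_i$ and $(V_i,d_i)_i$ are isomorphic.

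Chaining the three isomorphisms so obtained, the cohomology of $(\widehat{V}_i,\widehat{d}_i)_i$ is isomorphic to that of $(V_i,d_i)_i$, which is isomorphic to that of $(W_i,\partial_i)_i$, which is in turn isomorphic to that of $(\widehat{W}_i,\widehat{\partial}_i)_i$, and transitivity concludes the proof. There is essentially no hard step left: the only point requiring care is to verify that, in applying \cite[Proposition~2]{Di-Pietro.Droniou:23*1} to the new pair, the roles of the ``large'' space $V_i$ and the ``serendipity'' space $\widehat{V}_i$ (and of their extension and reduction operators) are matched so that the hypotheses of that proposition hold \emph{verbatim} — which is precisely what the relabelling above guarantees.
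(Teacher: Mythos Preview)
Your proposal is correct and follows essentially the same approach as the paper: invoke \cite[Proposition~2]{Di-Pietro.Droniou:23*1} for the pair $(V_i,d_i)_i$--$(\widehat{V}_i,\widehat{d}_i)_i$ using the properties \eqref{eq:serV.prop.1}--\eqref{eq:serV.prop.3}, then chain with the isomorphisms already recorded in Remark~\ref{rem:iso.cohom}. The paper's proof is just a two-sentence compression of exactly this argument.
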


\begin{proof}
  Theorem \ref{lem:serV.prop} gives all the properties needed to invoke \cite[Proposition~2]{Di-Pietro.Droniou:23*1} and prove that the cohomology of the complex $(\widehat{V}_i,\widehat{d}_i)_i$ is isomorphic to that of $(V_i,d_i)_i$. The latter is, on the other hand, isomorphic to both the cohomologies of $(W_i, \partial_i)$ and $(\widehat{W}_i,\widehat{\partial_i})_i$ (see Remark~\ref{rem:iso.cohom}).
\end{proof}

%------------------------------------------------------------------------------%

\section{The discrete de Rham complex and its serendipity version}\label{sec:ddr.sddr}

In this section we recall the Discrete De Rham (DDR) complex of \cite{Di-Pietro.Droniou:23*2} and its serendipity version (SDDR) of \cite{Di-Pietro.Droniou:23*1}.
These complexes will respectively play the role of $(W_i,\partial_i)_i$ and $(\widehat{W}_i,\widehat{\partial}_i)_i$ in \eqref{eq:abstract.diagram} for the applications of the following sections.
We only give a brief overview of the construction for the sake of conciseness and refer to \cite{Di-Pietro.Droniou:23*2,Di-Pietro.Droniou:23*1} for additional details.

\subsection{Local polynomial spaces and $L^2$-orthogonal projectors}\label{sec:ddr.sddr:local.polynomial.spaces}

For a polytope $T_d$ embedded in $\mathbb{R}^n$ with $n \ge d$ and an integer $\ell\ge 0$, we denote by $\Poly{\ell}(T_d)$ the space spanned by the restriction to $T_d$ of $n$-variate polynomials.
Introducing the boldface notation for the space of tangential polynomials $\vPoly{\ell}(T_d) \coloneq \Poly{\ell}(T_d; \Real^d)$ for $d \in \{ 2, 3\}$, the following direct decompositions hold (see, e.g., \cite{Arnold:18}):
\[
\begin{gathered}
  \vPoly{\ell}(T_2) = \Goly{\ell}(T_2) \oplus \cGoly{\ell}(T_2)
  \\
  \text{%
    with $\Goly{\ell}(T_2)\coloneq\GRAD_{T_2}\Poly{\ell+1}(T_2)$
    and $\cGoly{\ell}(T_2)\coloneq(\bvec{x}-\bvec{x}_{T_2})^\perp\Poly{\ell-1}(T_2)$,
  }
\end{gathered}
\]
where $\GRAD_{T_2}$ denotes the tangential gradient when $T_2$ is embedded in $\Real^3$ and $\bvec{v}^\perp$ is obtained rotating $\bvec{v}$ by $\frac{\pi}{2}$,
\[
\begin{gathered}
  \vPoly{\ell}(T_3) = \Goly{\ell}(T_3) \oplus \cGoly{\ell}(T_3)
  \\
  \text{%
    with $\Goly{\ell}(T_3)\coloneq\GRAD\Poly{\ell+1}(T_3)$
    and $\cGoly{\ell}(T_3)\coloneq(\bvec{x}-\bvec{x}_{T_3})\times \vPoly{\ell-1}(T_3)$,
  }
\end{gathered}
\]
and, for $d \in \{ 2, 3 \}$,
\[
\begin{gathered}
  \vPoly{\ell}(T_d) = \Roly{\ell}(T_d) \oplus \cRoly{\ell}(T_d)
  \\
  \text{%
    with $\Roly{\ell}(T_d)\coloneq\VROT_{T_d}\Poly{\ell+1}(T_d)$
    and $\cRoly{\ell}(T_d)\coloneq(\bvec{x}-\bvec{x}_{T_d})\Poly{\ell-1}(T_d)$,
  }
\end{gathered}
\]
where $\VROT_{T_2} \coloneq \GRAD_{T_2}^\perp$ and $\VROT_{T_3} \coloneq \CURL$.

We extend the above notations to negative exponents $\ell$ by setting all the spaces appearing in the decompositions equal to the trivial vector space.
Given a polynomial (sub)space $\mathcal{X}^\ell(T_d)$, the corresponding $L^2$-orthogonal projector is denoted by $\pi_{\mathcal{X},T_d}^\ell$.
Boldface font will be used when the elements of $\mathcal{X}^\ell(T_d)$ are vector-valued, and, for $\cvec{X} \in \{ \cvec{R}, \cvec{G} \}$, $\Xcproj{\ell}{T_d}$ denotes the $L^2$-orthogonal projector on $\cvec{X}^{{\rm c},\ell}(T_d)$.

\subsection{The two-dimensional discrete de Rham complex}

\subsubsection{Spaces}\label{sec:ddr:2d:spaces}

Given a two-dimensional polygonal mesh $\mathcal{M}_h$, we denote by $\mathcal{M}_{0,h}$, $\mathcal{M}_{1,h}$ and $\mathcal{M}_{2,h}$, respectively, the set of vertices $T_0$, edges $T_1$, and elements $T_2$ of the mesh.
Let $k \geq 0$ be a given polynomial degree and, for all $T_2 \in \mathcal{M}_{2,h}$, $n_{T_2}$ and $s_{T_2}$ two integers $\ge -1$ that we collect in the vectors $\boldsymbol{n}= ( n_{T_2} )_{T_2\in \mathcal{M}_{2,h}}$ and $\boldsymbol{s}=( s_{T_2} )_{T_2\in \mathcal{M}_{2,h}}$. The boldface notation is dropped when the values in $\boldsymbol{n}$ and $\boldsymbol{s}$ are all equal.\\

We define the following discrete counterparts of $H^1(\Omega)$, $\Hrot{\Omega}$, and $L^2(\Omega)$:
\[
\Xgrad{\bvec{n},k}{h}\coloneq\Big\{
\begin{aligned}[t]
  \underline{q}_{h}
  &=\big(
  (q_{T_2})_{T_2\in\mathcal{M}_{2,h}},(q_{T_1})_{T_1\in\mathcal{M}_{1,h}}, (q_{T_0})_{T_0\in\mathcal{M}_{0,h}}
  \big)\st
  \\
  &\qquad\text{$q_{T_2}\in\Poly{n_{T_2}}(T_2)$ for all $T_2\in\mathcal{M}_{2,h}$,}
  \\
  &\qquad\text{$q_{T_1}\in\Poly{k-1}(T_1)$ for all $T_1\in\mathcal{M}_{1,h}$,}
  \\
  &\qquad\text{$q_{T_0}\in\Real$ for all $T_0\in\mathcal{M}_{0,h}$}
  \Big\},
\end{aligned}
\]
\[
\Xcurl{\bvec{s},k}{h}\coloneq\Big\{
\begin{aligned}[t]
  \uvec{v}_{w,h}
  &=\big(
  (\bvec{v}_{\cvec{R},T_2},\bvec{v}_{\cvec{R},T_2}^\compl)_{T_2\in\mathcal{M}_{2,h}}, (v_{T_1})_{T_1\in\mathcal{M}_{1,h}}
  \big)\st
  \\
  &\qquad\text{$\bvec{v}_{\cvec{R},T_2}\in\Roly{k-1}(T_2)$ and $\bvec{v}_{\cvec{R},T_2}^\compl\in\cRoly{s_{T_2}}(T_2)$ for all $T_2\in\mathcal{M}_{2,h}$,}
  \\
  &\qquad\text{and $v_{T_1}\in\Poly{k}(T_1)$ for all $T_1\in\mathcal{M}_{1,h}$}\Big\},
\end{aligned}
\]
\[
\XL{k}{h}\coloneq\Poly{k}(\mathcal{M}_{2,h}),
\]
where $\Poly{k}(\mathcal{M}_{2,h})$ denotes the space of broken polynomials on $\mathcal{M}_{2,h}$ of total degree $\le k$.
The restriction of $\Xgrad{\bvec{n},k}{h}$ to an element $T_d$, $d \in \{ 1, 2\}$, is obtained collecting the components on $T_d$ and its boundary and is denoted by $\Xgrad{n,k}{T_d}$.
Similar conventions are used for the restriction of the spaces that will appear in the rest of the paper as well as their elements.

\subsubsection{Discrete vector calculus operators}

For any edge $T_1\in\mathcal{M}_{1,T_2}$ and any $\underline{q}_{T_1} \in \Xgrad{k-1,k}{T_1}$, the edge gradient $\GE \underline{q}_{T_1}$ is defined as the derivative along $T_1$ of the function $\trE\underline{q}_{T_1} \in \Poly{k+1}(T_1)$ such that $\trE\underline{q}_{T_1}(\bvec{x}_{T_0}) = q_{T_0}$ for any vertex $T_0$ of $T_1$ of coordinates $\bvec{x}_{T_0}$ and $\lproj{k-1}{T_1} \trE\underline{q}_{T_1} = q_{T_1}$.
We next define the gradient $\GF:\Xgrad{k-1,k}{T_2}\to\vPoly{k}(T_2)$ and the scalar two-dimensional potential $\trF:\Xgrad{k-1,k}{T_2}\to\Poly{k+1}(T_2)$ on $T_2$ such that, for all $\underline{q}_{T_2}\in\Xgrad{k-1,k}{T_2}$,
\[
\int_{T_2}\GF\underline{q}_{T_2}\cdot\bvec{v}
= -\int_{T_2} q_{T_2}\DIV_F\bvec{v}
+ \sum_{T_1\in\mathcal{M}_{1,T_2}}\omega_{T_2T_1}\int_{T_1} \trE\underline{q}_{T_1}~(\bvec{v}\cdot\normal_{T_2T_1})
\quad\forall\bvec{v}\in\vPoly{k}(T_2),
\]
\begin{equation} \label{eq:trF}
  \begin{multlined}
    \int_{T_2}\trF\underline{q}_{T_2}\DIV_{T_2}\bvec{v}
    = -\int_{T_2}\GF\underline{q}_{T_2}\cdot\bvec{v}
    + \sum_{T_1\in\mathcal{M}_{1,T_2}}\omega_{T_2T_1}\int_{T_1} \trE\underline{q}_{T_1}~(\bvec{v}\cdot\normal_{T_2T_1})
    \\
    \forall\bvec{v}\in\cRoly{k+2}(T_2),
  \end{multlined}
\end{equation}
  where $\normal_{T_2T_1}$ is a unit normal vector to $T_1$ lying in the plane of $T_2$ and $\omega_{T_2 T_1}$ the orientation of $T_1$ relative to $T_2$ such that $\omega_{T_2T_1} \normal_{T_2T_1}$ points out of $T_2$.

The two-dimensional scalar rotor $\CF:\Xcurl{k,k}{T_2}\to\Poly{k}(T_2)$ and the corresponding vector potential $\trFt:\Xcurl{k,k}{T_2}\to\vPoly{k}(T_2)$ (which can be interpreted as a tangential component when $T_2$ is the face of a polyhedron) are such that, for all $\uvec{v}_{T_2}\in\Xcurl{k,k}{T_2}$,
\[
\int_{T_2}\CF\uvec{v}_{T_2}~r
= \int_{T_2}\bvec{v}_{\cvec{R},{T_2}}\cdot\VROT_{T_2} r
- \sum_{T_1\in\mathcal{M}_{1,T_2}}\omega_{T_2T_1}\int_{T_1} v_{T_1}~r
\qquad\forall r\in\Poly{k}(T_2),
\]
\begin{equation}\label{eq:trFt}
  \begin{multlined}
    \int_{T_2}\trFt\uvec{v}_{T_2}\cdot(\VROT_{T_2} r + \bvec{w})
    = \int_{T_2}\CF\uvec{v}_{T_2}~r
    + \sum_{T_1\in\mathcal{M}_{1,T_2}}\omega_{T_2T_1}\int_{T_1} v_{T_1}~r
    + \int_{T_2}\bvec{v}_{\cvec{R},{T_2}}^\compl\cdot\bvec{w}
    \\
    \forall (r,\bvec{w})\in\Poly{k+1}(T_2)\times\cRoly{k}(T_2).
  \end{multlined}
\end{equation}
We will also need the two-dimensional vector rotor $\bvec{C}^k_{T_2}:\Xcurl{k,k}{T_2}\to\vPoly{k}(T_2)$ such that
\begin{equation} \label{eq:vCF}
  \int_{T_2} \bvec{C}^k_{T_2} \uvec{v}_{T_2} \cdot \bvec{w} = \int_{T_2} v_{T_2} \ROT \bvec{w} + \sum_{T_1\in\mathcal{M}_{1,T_2}} \omega_{T_2T_1} \int_{T_1} (\bvec{v}_{T_1} \cdot \normal_{T_2T_1}) (\bvec{w} \cdot \tangent_{T_1}) 
  \qquad\forall \bvec{w} \in \vPoly{k}(T_2).
\end{equation}

\subsubsection{DDR complex}\label{sec:ddr.2d:complex}

The two-dimensional DDR complex of degree $k$ reads
\begin{equation} \label{eq:glob.2D-ddr.complex}
  \begin{tikzpicture}[xscale=2,baseline={(Xgrad.base)}]
    \node at (-1,0) {DDR2d:};
    
    \node (Xgrad) at (0,0) {$\Xgrad{k-1,k}{h}$};
    \node (Xrot) at (1.5,0) {$\Xrot{k,k}{h}$};
    \node (WL2) at (3,0) {$\XL{k}{h}$,};

    \draw[->,>=latex] (Xgrad) -- (Xrot) node[midway, above, font=\footnotesize]{$\Gddr{h}$};
    \draw[->,>=latex] (Xrot) -- (WL2) node[midway, above, font=\footnotesize]{$\rotddr{h}$};
  \end{tikzpicture}
\end{equation}
where the discrete global gradient $\Gddr{h}$ and curl $\rotddr{h}$ are such that, for all $(\underline{q}_{h}, \uvec{v}_{h}) \in \Xgrad{k-1,k}{h} \times \Xcurl{k,k}{h}$,
\[
\begin{gathered}
  \Gddr{h} \underline{q}_{h}
  \coloneq
  \big(
  (\Rproj{k-1}{T_2}\GF\underline{q}_{T_2},\cRproj{k}{T_2}\GF\underline{q}_{T_2})_{T_2\in\mathcal{M}_{2,h}},
  ( \GE q_{T_1} )_{T_1\in\mathcal{M}_{1,h}}
  \big),
  \\
    \text{$( \rotddr{h} \uvec{v}_{h} )_{| T_2} \coloneq \CF\uvec{v}_{T_2}$ for all $T_2\in\mathcal{M}_{2,h}$}.
\end{gathered}
\]

\subsection{The three-dimensional discrete de Rham complex}

\subsubsection{Spaces}\label{sec:ddr:3d:spaces}

Let us now consider a three-dimensional mesh $\mathcal{M}_h$, with $\mathcal{M}_{0,h}$, $\mathcal{M}_{1,h}$, $\mathcal{M}_{2,h}$, and $\mathcal{M}_{3,h}$ denoting, respectively, the set of vertices $T_0$, edges $T_1$, faces $T_2$, and elements $T_3$.
Given four vectors of integers $\ge -1$ $\bvec{m}\coloneq(m_{T_3})_{T_3\in\mathcal{M}_{3,h}}$, $\bvec{n}\coloneq(n_{T_2})_{T_2\in\mathcal{M}_{2,h}}$, $\bvec{p}\coloneq(p_{T_3})_{T_3\in\mathcal{M}_{3,h}}$, and $\bvec{s}\coloneq(s_{T_2})_{T_2\in\mathcal{M}_{3,h}}$, we define the following discrete counterparts of $H^1(\Omega)$, $\Hcurl{\Omega}$, $\Hdiv{\Omega}$, and $L^2(\Omega)$:
\[
\Xgrad{\bvec{m},\bvec{n},k}{h}\coloneq\Big\{
\begin{aligned}[t]
  \underline{q}_{w,h}
  &=\big(
  (q_{T_3})_{T_3\in\mathcal{M}_{3,h}},(q_{T_2})_{T_2\in\mathcal{M}_{2,h}},(q_{T_1})_{T_1\in\mathcal{M}_{1,h}}, (q_{T_0})_{T_0\in\mathcal{M}_{0,h}}
  \big)\st
  \\
  &\qquad\text{$q_{T_3}\in\Poly{m_{T_3}}(T_3)$for all $T_3 \in\mathcal{M}_{3,h}$,}
  \\
  &\qquad\text{$q_{T_2}\in\Poly{n_{T_2}}(T_2)$ for all $T_2 \in\mathcal{M}_{2,h}$,}
  \\
  &\qquad\text{$q_{T_1}\in\Poly{k-1}(T_1)$ for all $T_1\in\mathcal{M}_{1,h}$,}
  \\
  &\qquad\text{and $q_{T_0}\in\Real$ for all $T_0\in\mathcal{M}_{0,h}$}
  \Big\},
\end{aligned}
\]
\[
\Xcurl{\bvec{p},\bvec{s},k}{h}\coloneq\Big\{
\begin{aligned}[t]
  \uvec{v}_{w,h}
  &=\big(
  (\bvec{v}_{\cvec{R},T_3},\bvec{v}_{\cvec{R},T_3}^\compl)_{T_3\in\mathcal{M}_{3,h}}, (\bvec{v}_{\cvec{R},T_2},\bvec{v}_{\cvec{R},T_2}^\compl)_{T_2\in\mathcal{M}_{2,h}}, (v_{T_1})_{T_1\in\mathcal{M}_{1,h}}
  \big)\st
  \\
  &\qquad\text{$\bvec{v}_{\cvec{R},T_3}\in\Roly{k-1}(T_3)$ and $\bvec{v}_{\cvec{R},T_3}^\compl\in\cRoly{p_{T_3}}(T_3)$ for all $T_3\in\mathcal{M}_{3,h}$,}
  \\
  &\qquad\text{$\bvec{v}_{\cvec{R},T_2}\in\Roly{k-1}(T_2)$ and $\bvec{v}_{\cvec{R},T_2}^\compl\in\cRoly{s_{T_2}}(T_2)$ for all $T_2\in\mathcal{M}_{2,h}$,}
  \\
  &\qquad\text{and $v_{T_1}\in\Poly{k}(T_1)$ for all $T_1\in\mathcal{M}_{1,h}$}\Big\},
\end{aligned}
\]
\[
\Xdiv{h}\coloneq\Big\{
\begin{aligned}[t]
  \uvec{w}_{w,h}
  &=\big((\bvec{w}_{\cvec{G},T_3},\bvec{w}_{\cvec{G},T_3}^\compl)_{T_3\in\mathcal{M}_{3,h}}, (w_{T_2})_{T_2\in\mathcal{M}_{2,h}}\big)\st
  \\
  &\qquad\text{$\bvec{w}_{\cvec{G},T_3}\in\Goly{k-1}(T_3)$ and $\bvec{w}_{\cvec{G},T_3}^\compl\in\cGoly{k}(T_3)$ for all $T_3\in\mathcal{M}_{3,h}$,}
  \\
  &\qquad\text{and $w_{T_2}\in\Poly{k}(T_2)$ for all $T_2\in\mathcal{M}_{2,T_3}$}
  \Big\},
\end{aligned}
\]
and
\[
\XL{k}{h} \coloneq \Poly{k}(\mathcal{M}_{3,h}).
\]
  When the values in $\boldsymbol{m}$, $\boldsymbol{n}$, $\boldsymbol{p}$ and $\boldsymbol{s}$ are all equal, where we drop the boldface notation.
  With a little abuse in notation, for the discrete gradient operator defined by \eqref{eq:ddr.3d:operators} below as well as for the tail space $\XL{k}{h}$, we use the same symbols as for the DDR2d sequence: all ambiguity will be removed by the context.

\subsubsection{Discrete vector calculus operators}

The element gradient $\GT:\Xgrad{k-1,k-1,k}{T_3}\to\vPoly{k}(T_3)$, the element curl $\CT:\Xcurl{k,k,k}{T_3}\to\vPoly{k}(T_3)$, and the element divergence $\DT:\Xdiv{T_3}\to\Poly{k}(T_3)$ are respectively defined such that, for all $\underline{q}_{T_3}\in\Xgrad{k-1,k-1,k}{T_3}$, all $\uvec{v}_{T_3}\in\Xcurl{k,k,k}{T_3}$, and all $\uvec{w}_{T_3}\in\Xdiv{T_3}$,
\begin{gather}\label{eq:GT}
  \int_{T_3}\GT\underline{q}_{T_3}\cdot\bvec{v}
  = -\int_{T_3} q_{T_3}\DIV\bvec{v}
  + \sum_{T_2\in\mathcal{M}_{2,T_3}}\omega_{T_3T_2}\int_{T_2}\trF\underline{q}_{T_2}~(\bvec{v}\cdot\normal_{T_2})
  \quad\forall\bvec{v}\in\vPoly{k}(T_3),
  \\\label{eq:CT}
  \begin{multlined}
    \int_{T_3}\CT\uvec{v}_{T_3}\cdot\bvec{z}
    = \int_{T_3}\bvec{v}_{\cvec{R},T_3}\cdot\CURL\bvec{z}
    + \sum_{T_2\in\mathcal{M}_{2,T_3}}\omega_{T_3T_2}\int_{T_2}\trFt\uvec{v}_{T_2}\cdot(\bvec{z}\times\normal_{T_2})
    \\
    \forall\bvec{z}\in\vPoly{k}(T_3),
  \end{multlined}\\ \label{eq:DT}
  \int_{T_3}\DT\uvec{w}_{T_3}~q
  = -\int_{T_3}\bvec{w}_{\cvec{G},T_3}\cdot\GRAD q
  + \sum_{T_2\in\mathcal{M}_{2,T_3}}\omega_{T_3T_2}\int_{T_2} w_{T_2}~q
  \qquad\forall q\in\Poly{k}(T_3),
\end{gather}
where $\normal_{T_2}$ is a unit normal vector to $T_2$ and $\omega_{T_3T_2}$ is the orientation of $T_2$ relative to $T_3$ such that $\omega_{T_3 T_2} \normal_{T_2}$ points out of $T_3$.

\subsubsection{DDR complex}

The global three-dimensional DDR complex of degree $k$ is
\begin{equation} \label{eq:glob.3d-ddr.complex}
  \begin{tikzpicture}[xscale=2,baseline={(Xgrad.base)}]
    \node at (-1,0) {DDR3d:};
    
    \node (Xgrad) at (0,0) {$\Xgrad{k-1,k-1,k}{h}$};
    \node (Xcurl) at (1.5,0) {$\Xcurl{k,k,k}{h}$};
    \node (Xdiv) at (3,0) {$\Xdiv{h}$};
    \node (WL2) at (4.5,0) {$\XL{k}{h}$,};

    \draw[->,>=latex] (Xgrad) -- (Xcurl) node[midway, above, font=\footnotesize]{$\Gddr{h}$};
    \draw[->,>=latex] (Xcurl) -- (Xdiv) node[midway, above, font=\footnotesize]{$\Cddr{h}$};
    \draw[->,>=latex] (Xdiv) -- (WL2) node[midway, above, font=\footnotesize]{$\Dh{h}$};
  \end{tikzpicture}
\end{equation}
where the operators $\Gddr{h}$, $\Cddr{h}$ and $\Dh{h}$ are obtained projecting the element and face operators onto the component spaces:
For all $(\underline{q}_{h},\uvec{v}_{h},\uvec{w}_{h})\in\Xgrad{k-1,k-1,k}{h}\times\Xcurl{k,k,k}{h}\times\Xdiv{h}$,
\begin{equation}\label{eq:ddr.3d:operators}
  \begin{gathered}
    \Gddr{h} \underline{q}_{h}
    \coloneq
    \begin{aligned}[t]
      \big(
      &(\Rproj{k-1}{T_3}\GT\underline{q}_{T_3},\cRproj{k}{T_3}\GT\underline{q}_{T_3})_{T_3\in\mathcal{M}_{3,h}},
      \\
      &( \Rproj{k-1}{T_2}\GF\underline{q}_{T_2},\cRproj{k}{T_2}\GF\underline{q}_{T_2} )_{T_2\in\mathcal{M}_{2,h}},
      \\
      &( \GE q_{T_1} )_{T_1\in\mathcal{M}_{1,h}}
      \big),
    \end{aligned} \\
    \Cddr{h}\uvec{v}_{h}
    \coloneq\big(
    (\Gproj{k-1}{T_3}\CT\uvec{v}_{T_3},\Gcproj{k}{T_3}\CT\uvec{v}_{T_3})_{T_3\in\mathcal{M}_{3,h}},
    ( \CF\uvec{v}_{T_2} )_{T_2\in\mathcal{M}_{2,h}}
    \big),
    \\
    \text{%
      $( \Dh{h}\uvec{w}_{h} )_{| T_3} \coloneq \DT\uvec{w}_{T_3}$ for all $T_3\in\mathcal{M}_{3,h}$.
    }
  \end{gathered}
\end{equation}

\subsection{Serendipity spaces}

We now introduce the two- and three-dimensional Serendipity Discrete de Rham (SDDR) complexes that will play the role of $(\widehat{W}_i,\widehat{\partial}_i)_i$ in the applications considered in Sections~\ref{sec:rotrot} and \ref{sec:stokes} below.

For each $T_d \in \mathcal{M}_{d,h}$, $d \in \{2, 3\}$, we select $\eta_{T_d}\ge 2$ faces/edges that are not pairwise aligned and such that ${T_d}$ lies entirely on one side of the plane/line spanned by each of those faces/edges and the regularity assumption detailed in \cite[Assumption~12]{Di-Pietro.Droniou:23*1} are satisfied.
We then set
\[
\ell_{T_d} \coloneq k + 1 - \eta_{T_d}.
\]
These integers are collected in the vector $\boldsymbol{\ell}_d \coloneq ( \ell_{T_d} )_{T_d \in \mathcal{M}_{d,h}}$.
The serendipity version of the spaces in \eqref{eq:glob.2D-ddr.complex} and \eqref{eq:glob.3d-ddr.complex} are, respectively,
\begin{equation}\label{eq:SXgrad.SXcurl}
  \begin{alignedat}{3}
  \SXgrad{k}{h} &\coloneq \Xgrad{\bvec{\ell}_2,k}{h},&\qquad
  \SXrot{h} &\coloneq \Xrot{\bvec{\ell}_2+1,k}{h},
  \\
  \SXgrad{k}{h} &\coloneq \Xgrad{\bvec{\ell}_3,\bvec{\ell}_2,k}{h}, &\qquad
  \SXcurl{h} &\coloneq \Xcurl{\bvec{\ell}_3 +1,\bvec{\ell}_2 +1,k}{h}.
  \end{alignedat}
\end{equation}
In these spaces, the degree of certain polynomial components inside faces and elements for which $\eta_{T_d} > 2$ is lower than in the non-serendipity spaces defined in Sections~\ref{sec:ddr:2d:spaces} and~\ref{sec:ddr:3d:spaces}, the more so the larger $\eta_{T_d}$.

\subsection{Extension and reduction maps between the two-dimensional DDR and SDDR complexes}

Following \cite[Section~5.3]{Di-Pietro.Droniou:23*1}, for a polygon $T_2$ it is possible to define serendipity gradient and rotor operators $\SerGrad{T_2}:\SXgrad{k}{T_2}\to\vPoly{k}(T_2)$ and $\SerRot{T_2}:\SXrot{T_2}\to\vPoly{k}(T_2)$ that satisfy the following properties:
\[
\begin{gathered}
  \SerGrad{T_2}\sIGrad{T_2} q = \GRAD_{T_2}q 
  \qquad \forall q \in \Poly{k+1}(T_2),
  \\
  \SerRot{T_2}\sIrot{T_2} \bvec{v}=\bvec{v}
  \qquad \forall \bvec{v} \in \vPoly{k}(T_2),
\end{gathered}
\]
where $\sIGrad{T_2}$ and $\sIrot{T_2}$ are the standard DDR interpolators on $\SXgrad{k}{T_2}$ and $\SXrot{T_2}$, obtained collecting $L^2$-orthogonal projections on the component spaces.
The role of the serendipity operators is to reconstruct polynomials fields inside $T_2$ from the polynomial components of the serendipity spaces.

In order to define two-dimensional extension maps, we need an operator $\EPoly{T_2}:\SXgrad{k}{T_2}\to\Poly{k-1}(T_2)$ that satisfies a formal integration by parts with the serendipity gradient:
For all $\bvec{w}\in\cRoly{k}(T_2)$,
\[
\int_F\EPoly{T_2}\underline{\widehat{q}}_{T_2}\DIV_{T_2}\bvec{w}
= - \int_{T_2}\SerGrad{T_2}\underline{\widehat{q}}_{T_2}\cdot\bvec{w}
+ \sum_{T_i\in\mathcal{M}_{1,T_2}}\omega_{T_2 T_1}{}\int_{T_1} \widehat{q}_{T_1}~(\bvec{w}\cdot\normal_{T_2T_1}).
\]
The extension operators $\Egrad{h}:\SXgrad{k}{h}\to\Xgrad{k-1,k}{h}$
and $\Erot{h}:\SXrot{h}\to\Xrot{k,k}{h}$ are defined by
\begin{gather} \label{eq:Egrad.T2}
  \Egrad{h}\underline{\widehat{q}}_{h}
  \coloneq \big(
  (\EPoly{T_2}\underline{\widehat{q}}_{T_2})_{T_2 \in \mathcal{M}_{2,h}}, (\widehat{q}_{T_1})_{T_1 \in \mathcal{M}_{1,h}}, (\widehat{q}_{T_0})_{T_0 \in \mathcal{M}_{0,h}}
  \big)
  \qquad\forall\underline{\widehat{q}}_{h}\in\SXgrad{k}{h},
  \\  \label{eq:Erot.T2}
  \Erot{h}\suvec{v}_{h}
  \coloneq \big(
  (\svec{v}_{\cvec{R},T_2}, \cRproj{k}{T_2}\SerRot{T_2} \suvec{v}_{T_2})_{T_2\in\mathcal{M}_{2,h}}, (\widehat{v}_{T_1})_{T_1\in\mathcal{M}_{1,h}}
  \big)
  \qquad\forall\suvec{v}_{h}\in\SXrot{h},
\end{gather}
while the reduction operators $\Rgrad{h}:\Xgrad{k-1,k}{h}\to\SXgrad{k}{h}$ and $\Rrot{h}:\Xrot{k,k}{h} \to\SXrot{h}$ are such that
\begin{gather}\label{eq:Rgrad.T2}
  \Rgrad{h} \underline{q}_{h}
  \coloneq \big(
  (\lproj{\ell_{T_2}}{T_2}q_{T_2})_{T_2 \in \mathcal{M}_{2,h}}, (q_{T_1})_{T_1 \in \mathcal{M}_{1,h}}, (q_{T_0})_{T_0 \in \mathcal{M}_{0,h}}
  \big)
  \qquad\forall\underline{q}_{h}\in\Xgrad{k-1,k}{h},
  \\ \label{eq:Rrot.T2}
  \Rrot{h}\uvec{v}_{h}
  \coloneq \big(
  (\bvec{v}_{\cvec{R},T_2}, \cRproj{\ell_{T_2}+1}{T_2}\bvec{v}_{\cvec{R},T_2}^\compl)_{T_2 \in \mathcal{M}_{2,h}}, (v_{T_1})_{T_1\in\mathcal{M}_{1,T_2}}
  \big)
  \qquad\forall\uvec{v}_{h}\in\Xrot{k,k}{h}.
\end{gather}

The complexes $(W_i,\partial_i)_i $ and $(\widehat{W}_i,\widehat{\partial}_i)_i$ along with the corresponding extension and reduction maps that will be used in the application of Section~\ref{sec:rotrot} are summarized in the following diagram:
\begin{equation}\label{eq:ddr2d.sddr2d}
\begin{tikzpicture}[xscale=2.5, yscale=1.25, baseline={(WL2.base)}]
  \node at (-1,1) {DDR2d:};
  \node at (-1,-1) {SDDR2d:};

  %% DDR2d
  \node (Xgrad) at (0,1) {$\Xgrad{k-1,k}{h}$};
  \node (Xrot) at (1.5,1) {$\Xrot{k,k}{h}$};
  \node (WL2) at (3,0) {$\XL{k}{h}$};

  \draw[->,>=latex] (Xgrad) -- (Xrot) node[midway, above, font=\footnotesize]{$\Gddr{h}$};
  \draw[->,>=latex] (Xrot) -- (WL2) node[midway, above, font=\footnotesize]{$\rotddr{h}$};

  %% SDDR2d
  \node (SXgrad) at (0,-1) {$\SXgrad{k}{h}$};
  \node (SXrot) at (1.5,-1) {$\SXrot{h}$};
  
  \draw[->,>=latex] (SXgrad) -- (SXrot) node[midway, above, font=\footnotesize]{$\sGddr{h}$};
  \draw[->,>=latex] (SXrot) -- (WL2) node[midway, below, font=\footnotesize]{$\sRddr{h}$};

  %% Extension and reduction operators
  \draw [->,>=latex] (SXgrad) to [bend right=10] node[midway, right, font=\footnotesize] {$\Egrad{h}$} (Xgrad) ;
  \draw [->,>=latex,dashed] (Xgrad) to [bend right=10] node[midway, left, font=\footnotesize] {$\Rgrad{h}$} (SXgrad) ;

  \draw [->,>=latex] (SXrot) to [bend right=10] node[midway, right, font=\footnotesize] {$\Erot{h}$} (Xrot) ;
  \draw [->,>=latex,dashed] (Xrot) to [bend right=10] node[midway, left, font=\footnotesize] {$\Rrot{h}$} (SXrot) ;
\end{tikzpicture}
\end{equation}
where $\sGddr{h}$ and $\sRddr{h}$ are given by \eqref{eq:comm.diag}.

\subsection{Extension and reduction maps between the three-dimensional DDR and SDDR complexes}

Now, taking a polyhedron $T_3$ and following again \cite[Section~5.4]{Di-Pietro.Droniou:23*1}, it is possible to define serendipity gradient and curl operators $\SerGrad{T_3}:\SXgrad{k}{T_3}\to\Poly{k}(T_3)$ and $\SerCurl{T_3}:\SXcurl{T_3}\to\Poly{k}(T_3)$ that satisfy the following properties:
\[
\begin{gathered}
  \SerGrad{T_3}\sIGrad{T_3} q=\GRAD_{T_3}q 
  \qquad \forall q \in \Poly{k+1}(T_3),
  \\
  \SerRot{T_3}\sIcurl{T_3} \bvec{v}=\bvec{v}
  \qquad \forall \bvec{v} \in \vPoly{k}(T_3),
\end{gathered}
\]
  where $\sIGrad{T_3}$ and $\sIcurl{T_3}$ are the standard DDR interpolators on $\SXgrad{k}{h}$ and $\SXcurl{h}$ obtained collecting $L^2$-orthogonal projection on the component spaces.
We also define 
$\EPoly{T}:\SXgrad{k}{T_3}\to\Poly{k-1}(T_3)$ such that, for all $\bvec{w}\in\cRoly{k}(T_3)$,
\[ 
\int_{T_3}\EPoly{T}\underline{\widehat{q}}_{T_3}\DIV\bvec{w}
= - \int_{T_3}\SerGrad{T_3}\underline{\widehat{q}}_T\cdot\bvec{w}
+ \sum_{T_2\in\mathcal{M}_{{T_2}\in {T_3}}}\omega_{T_3T_2}\int_{T_2} \widehat{q}_{T_2}~(\bvec{w}\cdot\normal_{T_2}),
\] 
$\RPoly{T_3}:\Xgrad{k-1,k-1,k}{T_3}\to\Poly{\ell_{T_3}}(T_3)$, such that, for all $\bvec{w}\in\cRoly{\ell_{T_3}+1}(T_3)$,
\begin{multline*}
  \int_{T_3}\RPoly{T_3}\underline{q}_{T_3}\DIV\bvec{w}
  =-\int_{T_3}\GT\underline{q}_{T_3}\cdot\bvec{w}
  \\
  + \sum_{T_2\in\mathcal{M}_{2,T_3}}\omega_{T_3 T_2}\int_{T_2} \trF\Egrad{T_2}\Rgrad{T_2}\underline{q}_{T_2}~(\bvec{w}\cdot\normal_{T_2}),
\end{multline*}
and $\RRoly{T_3}:\SXcurl{T_3}\to\Roly{k-1}(T_3)$ such that, for all $\bvec{w}\in\cGoly{k}(T_3)$,
\begin{multline*}
  \int_{T_3} \RRoly{T_3}\uvec{v}_{T_3}\cdot\CURL\bvec{w}
  = \int_{T_3}\CT\uvec{v}_{T_3}\cdot\bvec{w}
  \\
  - \sum_{T_2\in\mathcal{M}_{2,T_3}}\omega_{T_3T_2}\int_{T_2}\trFt\Ecurl{T_2}\Rcurl{T_2}\uvec{v}_{T_2}\cdot (\bvec{w}\times\normal_{T_2}).
\end{multline*}
where $\trF$, $\trFt$, $\GT$, and $\CT$, are respectively defined by
\eqref{eq:trF}, \eqref{eq:trFt}, \eqref{eq:GT}, and \eqref{eq:CT}.

The extension operators $\Egrad{h}:\SXgrad{k}{h}\to\Xgrad{k-1,k-1,k}{h}$
and $\Ecurl{h}:\SXcurl{h}\to\Xcurl{k,k,k}{h}$ are such that, for all $\underline{\widehat{q}}_{h}\in\SXgrad{k}{h}$ and all $\suvec{v}_{h}\in \SXcurl{h}$,
\[
\begin{gathered}
  \Egrad{h}\underline{\widehat{q}}_{h}
  \coloneq \big(
  (\EPoly{T_3}\underline{\widehat{q}}_{T_3})_{T_3\in\mathcal{M}_{3,h}},
  (\EPoly{T_2}\underline{\widehat{q}}_{T_2})_{T_2\in\mathcal{M}_{2,h}},
  (\widehat{q}_{T_1})_{T_1 \in \mathcal{M}_{1,h}}, (\widehat{q}_{T_0})_{T_0 \in \mathcal{M}_{0,h}}
  \big),
  \\
  \Ecurl{h}\suvec{v}_{h}
  \coloneq \big(
  (\svec{v}_{\cvec{R},T_3}, \cRproj{k}{T_3}\SerCurl{T_3}\suvec{v}_{T_3})_{T_3\in\mathcal{M}_{3,h}},
  (\svec{v}_{\cvec{R},T_2}, \cRproj{k}{T_2}\SerCurl{T_2}\suvec{v}_{T_2})_{T_2\in\mathcal{M}_{2,h}}, (\widehat{v}_{T_1})_{T_1\in\mathcal{M}_{1,h}}
  \big),
\end{gathered}
\]
while the reduction operators are $\Rgrad{h}:\Xgrad{k-1,k-1,k}{h}\to\SXgrad{k}{h}$
and $\Rcurl{h}:\Xcurl{k,k,k}{h}\to\SXcurl{h}$ such that, for all $\underline{q}_{h}\in\Xgrad{k-1,k-1,k}{h}$ and all $\uvec{v}_{h}\in\Xcurl{k,k,k}{h}$, 
\[
\begin{gathered}
  \Rgrad{h}\underline{q}_{h}
  \coloneq \big(
  (\RPoly{T_3}\underline{q}_{T_3})_{T_3\in\mathcal{M}_{3,h}},
  (\lproj{\ell_{T_2}}{T_2}q_{T_2})_{T_2\in\mathcal{M}_{2,h}},
  (q_{T_1})_{T_1 \in \mathcal{M}_{1,h}},
  (q_{T_0})_{T_0 \in \mathcal{M}_{0,h}}
  \big),
  \\
  \Rcurl{h}\uvec{v}_{h}
  \coloneq \big(
  (\RRoly{T_3}\uvec{v}_{T_3}, \cRproj{\ell_{T_3}+1}{T_3}\bvec{v}_{\cvec{R},{T_3}}^\compl)_{T_3\in\mathcal{M}_{3,h}},
  (\bvec{v}_{\cvec{R},T_2},\cRproj{\ell_{T_2}+1}{T_2}\bvec{v}_{\cvec{R},T_2}^\compl)_{T_2\in\mathcal{M}_{2,h}},
  (v_{T_1}
  \big)_{T_1\in\mathcal{M}_{1,h}}).
\end{gathered}
\]

The complexes $(W_i,\partial_i)_i $ and $(\widehat{W}_i,\widehat{\partial}_i)_i$ for the application of Section~\ref{sec:stokes} along with the corresponding extension and reduction maps are summarized in the following diagram:
\begin{equation}\label{eq:ddr3d.sddr3d}
\begin{tikzpicture}[xscale=2.5, yscale=1.25, baseline={(WL2.base)}]
  \node at (-1,1) {DDR3d:};
  \node at (-1,-1) {SDDR3d:};

  %% DDR3d
  \node (Xgrad) at (0,1) {$\Xgrad{k-1,k-1,k}{h}$};
  \node (Xcurl) at (1.5,1) {$\Xcurl{k,k,k}{h}$};
  \node (Xdiv) at (3,0) {$\Xdiv{h}$};
  \node (WL2) at (4.5,0) {$\XL{k}{h}$};
  
  \draw[->,>=latex] (Xgrad) -- (Xcurl) node[midway, above, font=\footnotesize]{$\Gddr{h}$};
  \draw[->,>=latex] (Xcurl) -- (Xdiv) node[midway, above, font=\footnotesize]{$\Cddr{h}$};
  \draw[->,>=latex] (Xdiv) -- (WL2) node[midway, above, font=\footnotesize]{$\Dh{h}$};
    
  %% SDDR2d
  \node (SXgrad) at (0,-1) {$\SXgrad{k}{h}$};
  \node (SXcurl) at (1.5,-1) {$\SXcurl{h}$};
  %\node (SXdiv) at (3,-1) {$\Xdiv{h}$};
  %\node (SWL2) at (4.5,-1) {$\XL{k}{h}$};

  \draw[->,>=latex] (SXgrad) -- (SXcurl) node[midway, above, font=\footnotesize]{$\sGddr{h}$};
  \draw[->,>=latex] (SXcurl) -- (Xdiv) node[midway, below, font=\footnotesize]{$\sCddr{h}$};
  %\draw[->,>=latex] (SXdiv) -- (SWL2) node[midway, above, font=\footnotesize]{$\Dh{h}$};
  
  %% Extension and reduction operators
  \draw [->,>=latex] (SXgrad) to [bend right=10] node[midway, right, font=\footnotesize] {$\Egrad{h}$} (Xgrad) ;
  \draw [->,>=latex,dashed] (Xgrad) to [bend right=10] node[midway, left, font=\footnotesize] {$\Rgrad{h}$} (SXgrad) ;

  \draw [->,>=latex] (SXcurl) to [bend right=10] node[midway, right, font=\footnotesize] {$\Ecurl{h}$} (Xcurl) ;
  \draw [->,>=latex,dashed] (Xcurl) to [bend right=10] node[midway, left, font=\footnotesize] {$\Rcurl{h}$} (SXcurl) ;

  %\draw[<->] (Xdiv) to (SXdiv);
  %\draw[<->] (WL2) to (SWL2);
\end{tikzpicture}
\end{equation}
where $\sGddr{h}$ and $\sCddr{h}$ are given by \eqref{eq:comm.diag}.\\

\subsection{Cohomology of the serendipity DDR complexes}

We recall the following result from \cite{Di-Pietro.Droniou:23*1} (see, in particular, Lemmas~22 and~26 therein).
\begin{lemma}[Cohomology of the DDR and SDDR complexes]\label{lem:asm.SDDR}
  The two- and three-dimensional DDR and SDDR complexes, together with their extension and reduction operators, satisfy Assumption \ref{Isom.W.W}.
  In particular, this implies that both the cohomologies of the SDDR and DDR complexes are isomorphic to the cohomology of the corresponding continuous de Rham complex.  
\end{lemma}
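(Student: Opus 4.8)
The plan is to recognize that conditions \ref{P:Identity.ERW}--\ref{P:E.R.W.cochain} of Assumption~\ref{Isom.W.W} are exactly the structural properties that the DDR/SDDR extension and reduction pairs were designed to satisfy, so that the proof reduces to matching each condition with the corresponding statement in \cite{Di-Pietro.Droniou:23*1} and then invoking \cite[Proposition~2]{Di-Pietro.Droniou:23*1} together with the known cohomology of the DDR complex.

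First I would dispatch the cochain property \ref{P:E.R.W.cochain}. By construction (see the comments following diagrams~\eqref{eq:ddr2d.sddr2d} and \eqref{eq:ddr3d.sddr3d}), the serendipity differentials are defined through~\eqref{eq:comm.diag}, i.e. $\widehat{\partial}_i=\ReW{W}{i+1}\partial_i\EX{W}{i}$; to upgrade this to the two-sided intertwining $\ReW{W}{i+1}\partial_i=\widehat{\partial}_i\ReW{W}{i}$ and $\EX{W}{i+1}\widehat{\partial}_i=\partial_i\EX{W}{i}$, I would run a componentwise computation on the definitions \eqref{eq:Egrad.T2}--\eqref{eq:Rrot.T2} (and their three-dimensional analogues), using the polynomial consistency identities $\SerGrad{T_d}\sIGrad{T_d}q=\GRAD_{T_d}q$ and $\SerRot{T_2}\sIrot{T_2}\bvec{v}=\bvec{v}$ (with the curl counterpart in 3D) recalled just before the extension operators. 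The same consistency identities immediately give the left-inverse property \ref{P:Identity.ERW}: extending a serendipity cocycle and reducing it back reproduces the original element, the restriction to $\Kernel\widehat{\partial}_i$ being what compensates for the reductions not being global left inverses.

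The main obstacle is the homological compatibility \ref{P:complex.ERW}, namely $(\EX{W}{i+1}\ReW{W}{i+1}-\Id_{W_{i+1}})(\Kernel\partial_{i+1})\subset\Image(\partial_i)$. This is where the geometry enters: the selection of $\eta_{T_d}\ge 2$ non-pairwise-aligned faces/edges and the regularity hypothesis \cite[Assumption~12]{Di-Pietro.Droniou:23*1} guarantee that the polynomial degrees discarded in passing to the serendipity spaces \eqref{eq:SXgrad.SXcurl} can be reconstructed, so that $\EX{W}{i+1}\ReW{W}{i+1}$ perturbs a cocycle only by a coboundary. I would reproduce the arguments of \cite[Lemmas~22 and~26]{Di-Pietro.Droniou:23*1}, which establish this using the serendipity reconstruction together with the local exactness of the DDR complex; this is the delicate, geometry-dependent step and the crux of the whole lemma. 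Once \ref{P:Identity.ERW}--\ref{P:E.R.W.cochain} are in place, \cite[Proposition~2]{Di-Pietro.Droniou:23*1} yields the isomorphism between the SDDR and DDR cohomologies, and combining with the cohomology of the DDR complex computed in \cite{Di-Pietro.Droniou:23*2} (which matches the continuous de Rham cohomology) gives the final claim in both the two- and three-dimensional cases.
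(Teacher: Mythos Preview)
The paper does not prove this lemma at all: it is stated as a recall from \cite{Di-Pietro.Droniou:23*1}, with a pointer to Lemmas~22 and~26 therein, and nothing more. So there is no ``paper's own proof'' to compare against; your proposal is already more detailed than what the paper does, and since you ultimately defer the substantive step \ref{P:complex.ERW} to those same two lemmas, you land in the same place.

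That said, a couple of the heuristics in your sketch are off. Your justification for \ref{P:Identity.ERW} via the polynomial consistency identities $\SerGrad{T_d}\sIGrad{T_d}q=\GRAD_{T_d}q$ and $\SerRot{T_2}\sIrot{T_2}\bvec{v}=\bvec{v}$ does not quite work: those identities concern interpolates of smooth polynomials, whereas \ref{P:Identity.ERW} asks about $\ReW{W}{i}\EX{W}{i}$ applied to an arbitrary serendipity element, which need not be an interpolate. The actual mechanism is a design property of the serendipity operators $\SerGrad{T_d}$, $\SerRot{T_2}$, $\SerCurl{T_d}$ (their low-degree projections reproduce the stored components), and in fact in \cite{Di-Pietro.Droniou:23*1} one gets $\ReW{W}{i}\EX{W}{i}=\Id$ on the whole serendipity space, not merely on $\Kernel\widehat{\partial}_i$; your remark that ``the restriction to $\Kernel\widehat{\partial}_i$ compensates for the reductions not being global left inverses'' is therefore misleading in this concrete setting. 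Similarly, your plan for \ref{P:E.R.W.cochain} starts from \eqref{eq:comm.diag} and tries to ``upgrade'' it, but logically \eqref{eq:comm.diag} is a \emph{consequence} of \ref{P:E.R.W.cochain}, not a definition one can bootstrap from; the cochain identities are proved directly in \cite{Di-Pietro.Droniou:23*1} from the explicit formulas for the extensions and reductions. None of this is fatal, since you end up invoking the right external lemmas anyway, but if you were asked to make the argument self-contained these are the places where the sketch would need repair.
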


%------------------------------------------------------------------------------%

\section{A serendipity rot-rot complex}\label{sec:rotrot}

We now turn to the first application of the general construction considering the following smoother variant of the two-dimensional de Rham complex:
\begin{equation}\label{eq:complex.rotrot}
  \begin{tikzpicture}[xscale=2,baseline={(H1head.base)}]
    \node (H1head) at (0,0) {$H^1(\Omega)$};
    \node (Hrotrot) at (1.5,0) {$\Hrotrot{\Omega}$};
    \node (H1tail) at (3,0) {$H^1(\Omega)$,};

    \draw[->,>=latex] (H1head) -- (Hrotrot) node[midway, above, font=\footnotesize]{$\GRAD$};
    \draw[->,>=latex] (Hrotrot) -- (H1tail) node[midway, above, font=\footnotesize]{$\ROT$};
  \end{tikzpicture}  
\end{equation}
where $\Omega \subset \mathbb{R}^2$ is a polygonal domain and, for a smooth enough vector-valued field $\bvec{v}$, $\ROT \bvec{v} \coloneq \DIV \bvec{v}^\perp$.

Diagram~\eqref{eq:abstract.diagram} specialized to the present case becomes
\begin{equation}\label{eq:construction:rot-rot}
  \begin{tikzpicture}[xscale=2, yscale=2.5]
    \node at (1,0,0) {Srot-rot:};
    \node at (1,0,-2) {rot-rot:};
    \node at (1,2,0) {SDDR2d:};
    \node at (1,2,-2) {DDR2d:};
    
    \node (SVser) at (2,0,0) {$\SVser{h}$};
    \node (SSigmaser) at (4,0,0) {$\SSigmaser{h}$};
    \node (SW) at (6,0,-1) {$\SW{h}$};

    \node (SV) at (2,0,-2) {$\SV{h}$};
    \node (SSigma) at (4,0,-2) {$\SSigma{h}$};
    %\node (SW1) at (6,0,-2) {$\SW{h}$};

    \node (SXGrad) at (2,2,0) {$\SXgrad{k}{h}$};
    \node (SXRot) at (4,2,0) {$\SXrot{h}$};
    \node (Polyk) at (6,2,-1) {$\XL{k}{h}$};

    \node (XGrad) at (2,2,-2) {$\Xgrad{k-1,k}{h}$};
    \node (XRot) at (4,2,-2) {$\Xrot{k,k}{h}$};
    %\node (Polyk1) at (6,2,-2) {$\XL{k}{h}$};

    \draw [->,>=latex] (SVser) -- (SSigmaser) node[midway, below,font=\small] {$\suGh{h}$};
    \draw [->,>=latex] (SSigmaser) -- (SW) node[midway, below,font=\small] {$\suRh{h}$};
    \draw [->,>=latex] (SV) -- (SSigma) node[midway, above,font=\small] {$\uGh{h}$};
    \draw [->,>=latex] (SSigma) -- (SW) node[midway, above,font=\small] {$\uRh{h}$};
    \draw [->,>=latex] (SXGrad) -- (SXRot) node[midway, below,font=\small] {$\sGddr{h}$};
    \draw [->,>=latex] (SXRot) -- (Polyk) node[midway, below,font=\small] {$\sRddr{h}$};

    \draw [->,>=latex] (XGrad) -- (XRot) node[midway, above,font=\small] {$\Gddr{h}$};
    \draw [->,>=latex] (XRot) -- (Polyk) node[midway, above,font=\small] {$\rotddr{h}$};

    % Extension and reduction maps in the bottom horizontal graph
    \draw [->,>=latex] (SVser) to [bend right=20] node[midway, right,font=\small] {$\EV{h}$} (SV) ;
    \draw [->,dashed] (SV) to [bend right=20] node[midway, left,font=\small] {$\RV{h}$} (SVser);
    \draw [->,>=latex] (SSigmaser) to [bend right=20] node[midway, right,font=\small] {$\ES{h}$} (SSigma) ;
    \draw [->,dashed] (SSigma) to [bend right=20] node[midway, left, font=\small] {$\RS{h}$} (SSigmaser);

    % Extension and reduction maps in the top horizontal graph
    \draw [->,>=latex,dashed] (XGrad) to [bend right=20] node[midway, left, font=\footnotesize] {$\Rgrad{h}$} (SXGrad);
    \draw [->] (SXGrad) to [bend right=20] node[midway, right, font=\footnotesize] {$\Egrad{h}$} (XGrad);
    \draw [->,>=latex, dashed] (XRot) to [bend right=20] node[midway, left, font=\footnotesize] {$\Rrot{h}$} (SXRot);
    \draw [->] (SXRot) to [bend right=20] node[midway, right, font=\footnotesize] {$\Erot{h}$} (XRot);
    
    % Extension and reduction maps in the back vertical graph
    \draw [<->] (XGrad) -- (SV) node[pos=0.8, right,font=\small] {Id};
    \draw [->] (SXRot) to [bend left=20] node[pos=0.4, right,font=\footnotesize] {$\sInjrot$} (SSigmaser);
    
    \draw [->] (XRot) to [bend left=20] node[pos=0.6, right, font=\footnotesize] {$\Injrot$} (SSigma);    
    \draw [->] (Polyk) to [bend left=20] node[pos=0.6, right, font=\footnotesize] {$\InjW{h}$} (SW);
    
    \draw [->,dashed] (SW) to [bend left=20] node[pos=0.4, left, font=\footnotesize] {$\ResW{h}$} (Polyk);
    \draw [->,dashed] (SSigma) to [bend left=20] node[pos=0.4, left,font=\footnotesize]  {$\Resrot$} (XRot);    

    % Extension and reduction maps in the front vertical graph
    \draw [->,dashed] (SSigmaser) to [bend left=20] node[pos=0.6, left,font=\footnotesize]  {$\sResrot$} (SXRot);
    \draw [<->] (SXGrad) -- (SVser) node[pos=0.4, right,font=\small] {Id};
  \end{tikzpicture}
\end{equation}
The top horizontal portion of the above diagram corresponds to \eqref{eq:ddr2d.sddr2d}.
In the rest of this section we will provide a precise definition of the other spaces and operators that appear in it and, using the abstract framework of Section~\ref{sec:gen}, show that all the complexes involved have isomorphic cohomologies.

\subsection{Discrete rot-rot complex}

A discrete counterpart of the complex \eqref{eq:complex.rotrot} was developed in \cite{Di-Pietro:23}.
We briefly recall its construction here.
We define the discrete head $H^1(\Omega)$, $\Hrotrot{\Omega}$, and tail $H^1(\Omega)$ spaces as follows:
\[
\SV{h} \coloneq\Xgrad{k-1,k}{h},\quad
\SSigma{h} \coloneq\Xrot{k,k}{h}\times\left(\bigtimes_{T_1\in\mathcal{M}_{1,h}}\Poly{k-1}(T_1)\times\Real^{\mathcal{M}_{0,h}} \right),\quad
\SW{h} \coloneq\Xgrad{k,k}{h}.
\]
The discrete gradient and rotor are respectively such that, for all $\underline{q}_h\in \SV{h}$ and all $\uvec{v}_h=\big(\uvec{v}_{w,h}, \underline v_{\compl,h} \big)\in\SSigma{h}$,
\begin{align}\label{eq:uGh}
  \uGh{h}~\underline{q}_h
  &\coloneq\big(
  \Gddr{h}\underline{q}_h,\underline{0}
  \big),
  \\ \label{eq:uRh}
  \uRh{h}\uvec{v}_h
  &\coloneq\big(
  \rotddr{h}\uvec{v}_{w,h}, \underline{v}_{\compl,h}
  \big).
\end{align}
The discrete counterpart of \eqref{eq:complex.rotrot} is then given by:
\begin{equation}\label{eq:d.rotrot.complex}
    \begin{tikzpicture}[xscale=2,baseline={(H1head.base)}]
    \node at (-1,0) {rot-rot:};
    
    \node (H1head) at (0,0) {$\SV{h}$};
    \node (Xrotrot) at (1.5,0) {$\SSigma{h}$};
    \node (H1tail) at (3,0) {$\SW{h}$.};

    \draw[->,>=latex] (H1head) -- (Xrotrot) node[midway, above, font=\footnotesize]{$\uGh{h}$};
    \draw[->,>=latex] (Xrotrot) -- (H1tail) node[midway, above, font=\footnotesize]{$\uRh{h}$};
  \end{tikzpicture}
\end{equation}

\subsection{Extension and reduction maps between the two-dimensional DDR and rot-rot complexes}\label{eq:rot-rot:extension.reduction}

In order to apply the construction of Definition~\ref{def:serendipity.complex} to define and characterize a serendipity version of this complex, we need extension and reduction maps between the two-dimensional DDR complex \eqref{eq:glob.2D-ddr.complex} and the discrete rot-rot complex \eqref{eq:d.rotrot.complex}.
Noticing that
\[
\SW{h}=\XL{k}{h}\times\left(\bigtimes_{T_1\in\mathcal{M}_{1,h}}\Poly{k-1}(T_1)\times\Real^{\mathcal{M}_{0,h}} \right),
\]
the spaces $\Xrot{k,k}{h}$ and $\XL{k}{h}$ inject respectively into $\SSigma{h}$ and $\SW{h}$ trough the extension map such that, for all $\uvec{v}_{w,h}\in\Xrot{k,k}{h}$ and all $q_h\in \XL{k}{h}$,
\begin{equation}\label{eq:Injrot.InjW}
  \text{
    $\Injrot\uvec{v}_{w,h} 
    \coloneq\big( \uvec{v}_{w,h}, \underline{0}\big)$
    and 
    $\InjW{h}q_h
    \coloneq \big( q_h,\underline{0}\big)$.
  }
\end{equation}
We also define the reduction map such that, for all $\uvec{v}_h=(\uvec{v}_{w,h},\underline{v}_{\compl,h})\in\SSigma{h}$ and all $\underline{q}_h=(q_h,\underline{q}_{\compl,h})\in\SW{h}$,
\begin{equation}\label{eq:Resrot.ResW}
  \text{%
    $\Resrot\uvec{v}_h
    \coloneq\uvec{v}_{w,h}$
    and
    $\ResW{h}\underline {q}_h
    \coloneq q_h$.
  }
\end{equation}
The decomposition of Lemma \ref{def.v} clearly holds by definition, so we have
\[
\text{%
  $\SSigma{h}=\Injrot \Xrot{k,k}{h} \oplus \Kernel \Resrot$
  and $\SW{h}=\InjW{h} \XL{k}{h}\oplus \Kernel \ResW{h}$.
}
\]

\begin{theorem}[Properties of the extension and reduction maps between the DDR2d and rot-rot complexes]\label{thm.hom.rotrot}  
The maps defined by \eqref{eq:Injrot.InjW} and \eqref{eq:Resrot.ResW} satisfy Assumption \ref{Isom.W.V}, i.e.,  
  \begin{enumerate}[label=\textbf{(B{\arabic*})}]
  \item For all $\uvec{v}_{w,h}\in\Xrot{k,k}{h}$ and all $ q_h\in \XL{k}{h}$,
    \begin{equation}\label{eq:isomorphism.rot.W}
      \text{
        $\Resrot\Injrot\uvec{v}_{w,h} = \uvec{v}_{w,h}$
        and
        $\ResW{h}\InjW{h}q_h = q_h$.
      }
    \end{equation}
    
  \item For all $\uvec{v}_h=(\uvec{v}_{w,h},\underline{v}_{\compl,h})\in\Kernel\uRh{h}$,
    \begin{equation}\label{eq:complex.rot}
      \Injrot\Resrot\uvec{v}_h-\uvec{v}_h\in\Image(\uGh{h}).
    \end{equation}
    
  \item For all $\underline{q}_h\in \SV{h}$, all $\uvec{v}_h\in\SSigma{h}$, and all $\uvec{v}_{w,h}\in\Xrot{k,k}{h}$, it holds
    \begin{alignat}{4}\label{eq:cochain.rot}
      \Resrot\uGh{h}\underline{q}_h &= \Gddr{h} \underline{q}_h,&\qquad
      \Injrot\Gddr{h} \underline{q}_h &= \uGh{h}\underline{q}_h,
      \\ \label{eq:cochain.W}
      \ResW{h}\uRh{h} \uvec{v}_h &= \rotddr{h} \Resrot\uvec{v}_h,&\qquad
      \InjW{h} \rotddr{h} \uvec{v}_{w,h} &= \uRh{h}\Injrot\uvec{v}_{w,h}.
    \end{alignat}
  \end{enumerate}
  It then follows from Remark~\ref{rem:iso.cohom} that the two-dimensional DDR complex \eqref{eq:glob.2D-ddr.complex} and the rot-rot complex \eqref{eq:d.rotrot.complex} have isomorphic cohomologies.
\end{theorem}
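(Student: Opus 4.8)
The plan is to establish the three items \ref{P:Identity.ER}--\ref{P:E.R.cochain} of Assumption~\ref{Isom.W.V} one by one, directly from the definitions of the injection and reduction maps \eqref{eq:Injrot.InjW}--\eqref{eq:Resrot.ResW} and of the discrete differentials \eqref{eq:uGh}--\eqref{eq:uRh}. The decisive structural feature I would exploit is that both $\SSigma{h}$ and $\SW{h}$ are Cartesian products whose second factor---the edge and vertex components carrying the enhanced regularity---is identical and is transported \emph{unchanged} by $\uRh{h}$, while the injections set it to $\underline{0}$ and the reductions discard it. Once the three properties are in place, the cohomology isomorphism is immediate from Remark~\ref{rem:iso.cohom}, exactly as already signalled in the statement.

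First I would treat \ref{P:Identity.ER} and \ref{P:E.R.cochain}, which are pure bookkeeping of components. For \eqref{eq:isomorphism.rot.W} one reads off $\Resrot\Injrot\uvec{v}_{w,h}=\uvec{v}_{w,h}$ and $\ResW{h}\InjW{h}q_h=q_h$ straight from \eqref{eq:Injrot.InjW}--\eqref{eq:Resrot.ResW}. For the cochain relations \eqref{eq:cochain.rot}--\eqref{eq:cochain.W} I would substitute $\uGh{h}\underline{q}_h=(\Gddr{h}\underline{q}_h,\underline{0})$ and $\uRh{h}\uvec{v}_h=(\rotddr{h}\uvec{v}_{w,h},\underline{v}_{\compl,h})$ and match components: the reductions project onto the first factor, recovering $\Gddr{h}\underline{q}_h$ and $\rotddr{h}\Resrot\uvec{v}_h$, whereas the injections reproduce exactly $\uGh{h}\underline{q}_h$ and $\uRh{h}\Injrot\uvec{v}_{w,h}$ because the second factor is $\underline{0}$ on both sides. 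All four identities therefore hold verbatim.

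The one place deserving a genuine remark is \ref{P:complex.ER}, namely \eqref{eq:complex.rot}. The key observation is that the kernel condition already annihilates the complementary component: by the definition of $\uRh{h}$, an element $\uvec{v}_h=(\uvec{v}_{w,h},\underline{v}_{\compl,h})\in\Kernel\uRh{h}$ satisfies both $\rotddr{h}\uvec{v}_{w,h}=0$ and $\underline{v}_{\compl,h}=\underline{0}$. Hence $\Injrot\Resrot\uvec{v}_h=(\uvec{v}_{w,h},\underline{0})=\uvec{v}_h$, so the difference $\Injrot\Resrot\uvec{v}_h-\uvec{v}_h$ vanishes identically and lies trivially in $\Image(\uGh{h})$. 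In contrast with the abstract setting, no appeal to the homological properties of the DDR2d complex is needed at this step, since \ref{P:complex.ER} holds here as an \emph{exact} equality rather than merely up to an image.

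I therefore do not expect any serious obstacle; the main (and only) subtlety is recognizing that the product structure reduces every claim either to an exact equality or to the vanishing of a difference on the relevant kernel. For completeness I would also note that the tail instance of \ref{P:complex.ER} is equally direct: $(\InjW{h}\ResW{h}-\Id)\underline{q}_h=(\underline{0},-\underline{q}_{\compl,h})$ is the image under $\uRh{h}$ of the element of $\SSigma{h}$ with zero head component and complementary component $-\underline{q}_{\compl,h}$, using that the second factors of $\SSigma{h}$ and $\SW{h}$ coincide. Having verified \ref{P:Identity.ER}--\ref{P:E.R.cochain}, I would close by invoking Remark~\ref{rem:iso.cohom} to conclude that \eqref{eq:glob.2D-ddr.complex} and \eqref{eq:d.rotrot.complex} share the same cohomology.
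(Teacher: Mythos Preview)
Your proposal is correct and follows essentially the same approach as the paper: both verify \ref{P:Identity.ER} and \ref{P:E.R.cochain} by direct component bookkeeping from \eqref{eq:Injrot.InjW}--\eqref{eq:Resrot.ResW} and \eqref{eq:uGh}--\eqref{eq:uRh}, and both dispatch \ref{P:complex.ER} by observing that $\uvec{v}_h\in\Kernel\uRh{h}$ forces $\underline{v}_{\compl,h}=\underline{0}$, so that $\Injrot\Resrot\uvec{v}_h-\uvec{v}_h=\uvec{0}=\uGh{h}\underline{0}$. Your additional remark on the tail instance of \ref{P:complex.ER} goes slightly beyond what the theorem statement requires, but is correct and harmless.
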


\begin{proof}
  \underline{(i) \emph{Proof of \eqref{eq:isomorphism.rot.W}.}}
  For all $\uvec{v}_{w,h}\in\Xrot{k,k}{h}$,
  $\Resrot\Injrot\uvec{v}_{w,h}\overset{\eqref{eq:Injrot.InjW}}=\Resrot(\uvec{v}_{w,h},\underline{0})\overset{\eqref{eq:Resrot.ResW}}=\uvec{v}_{w,h}$ and, for all $q_h\in \XL{k}{h}$,
  $\ResW{h}\InjW{h}q_h\overset{\eqref{eq:Injrot.InjW}}=\ResW{h}(q_h,\underline{0})\overset{\eqref{eq:Resrot.ResW}}=q_h$.
  \medskip\\
  \underline{(ii) \emph{Proof of \eqref{eq:complex.rot}.}}
  Let $\uvec{v}_h\in\Kernel\uRh{h}$. Using the definition \eqref{eq:uRh} of $\uRh{h}$, we obtain that $\uvec{v}_h=(\uvec{v}_{w,h},\underline{0})$, so $\Injrot\Resrot\uvec{v}_h-\uvec{v}_h=\uvec{0}=\uGh{h}\underline{0}.$
  \medskip\\
  \underline{(iii) \emph{Proof of \eqref{eq:cochain.rot}.}}
  For all $\underline{q}_h\in\SV{h}$, we have
  $\Resrot\uGh{h}\underline{q}_h \overset{\eqref{eq:uGh}}= \Resrot(\Gddr{h}\underline{q}_h,\underline{0}) \overset{\eqref{eq:Resrot.ResW}}= \Gddr{h}\underline{q}_h$
  and
  $\Injrot\Gddr{h}\underline{q}_h \overset{\eqref{eq:Injrot.InjW}}= (\Gddr{h}\underline{q}_h,\underline{0}) \overset{\eqref{eq:uGh}}= \uGh{h}\underline{q}_h$.
  \medskip\\
  \underline{(vi) \emph{Proof of \eqref{eq:cochain.W}.}}
  For all $\uvec{v}_h=(\uvec{v}_{w,h},\underline{v}_{\compl,h})\in\SSigma{h}$,
  \[
  \ResW{h}\uRh{h}(\uvec{v}_{w,h},\underline{v}_{\compl,h})
  \overset{\eqref{eq:uRh}}= \ResW{h}(\rotddr{h}\uvec{v}_{w,h},\underline{v}_{\compl,h})
  \overset{\eqref{eq:Resrot.ResW}}= \rotddr{h}\uvec{v}_{w,h}
  \overset{\eqref{eq:Resrot.ResW}}= \rotddr{h}\Resrot\uvec{v}_h
  \]
  and, for all $\uvec{v}_{w,h}\in\Xrot{k,k}{h}$,
  \[
  \InjW{h}\rotddr{h}\uvec{v}_{w,h}
  \overset{\eqref{eq:Injrot.InjW}}= (\rotddr{h}\uvec{v}_{w,h},\underline{0})
  \overset{\eqref{eq:uRh}}= \uRh{h}(\uvec{v}_{w,h},\underline{0})
  \overset{\eqref{eq:Injrot.InjW}}= \uRh{h}\Injrot\uvec{v}_{w,h}.\qedhere
  \]
\end{proof}

%------------------------------------------------------------------------------%

\subsection{Serendipity rot-rot complex and homological properties}\label{sec:discrete.rot-rotcomplex:serendipity}

Lemma \ref{lem:asm.SDDR} and Theorem \ref{thm.hom.rotrot} ensure that the SDDR and rot-rot complexes satisfy Assumptions~\ref{Isom.W.W} and~\ref{Isom.W.V}. 
We are now in a position to apply the construction \eqref{eq:construction} to the rot-rot complex in order to derive its serendipity version and characterize its cohomology.

\subsubsection{Serendipity spaces and operators}\label{sec:discrete.rot-rotcomplex:serendipity:spaces}

Recalling \eqref{eq:def.v^}, the serendipity version of spaces $\SV{h}$ and $\SSigma{h}$ can be written as follows:
\begin{equation}\label{eq:S.D.Spaces}
  \begin{gathered}
    \SVser{h} \coloneq
    \SXgrad{k}{h}
    \\
    \SSigmaser{h}
    \coloneq
    \SXrot{h}\times \Kernel \Resrot
    \cong
    \SXrot{h}\times\left(
    \bigtimes_{T_1\in\mathcal{M}_{1,h}}\Poly{k-1}(T_1)\times\Real^{\mathcal{M}_{0,h}}
    \right).
  \end{gathered}
\end{equation}
%% RIPRENDERE DA QUI
Accounting for the isomorphism in \eqref{eq:S.D.Spaces}, we write a generic element $\suvec{v}_h$ of $\SSigmaser{h}$ as $\suvec{v}_h=\big(\suvec{v}_{w,h},\underline{v}_{\compl,h}\big)$ with $\suvec{v}_{w,h}\in\SXrot{h}$ and $\underline{v}_{\compl,h}$ such that $(\suvec{0},\underline{v}_{\compl,h})\in\Kernel\Resrot$.
We define the extension of $\SXrot{h}$ into $\SSigmaser{h}$ according to \eqref{eq:def.sExt}:
\[
\sInjrot\suvec{v}_{w,h} \coloneq \big( \suvec{v}_{w,h}, \underline{0} \big).
\]
The reduction is given by \eqref{eq:def.sRed}:
\[
\sResrot(\suvec{v}_{w,h},\underline{v}_{\compl,h}) \coloneq \suvec{v}_{w,h}.
\]

The reduction operators $\RV{h}:\SV{h}\to\SVser{h}$ and $\RS{h}:\SSigma{h}\to\SSigmaser{h}$ are defined using \eqref{eq:def.ReV} and accounting for the isomorphism \eqref{eq:S.D.Spaces}:
For all $\underline{q}_h\in\SV{h}$ and all $\uvec{v}_h\in\SSigma{h}$,
\[
\text{%
  $\RV{h}\underline{q}_h
  \coloneq
  \Rgrad{h}\underline{q}_h$
  and
  $\RS{h}\uvec{v}_h
  \coloneq
  \Big(\Rrot{h}\Resrot\uvec{v}_h,\underline{v}_{\compl,h}\Big)$,
}
\]
with $\Rgrad{h}$ and $\Rrot{h}$ respectively defined according to \eqref{eq:Rgrad.T2} and \eqref{eq:Rrot.T2}.

Finally, using \eqref{eq:def.EX}, the extension operators $\EV{h}:\SVser{h}\to\SV{h}$ and $\ES{h}:\SSigmaser{h}\to\SSigma{h}$ are such that, 
for all $\underline{\widehat{q}}_h\in\SVser{h}$ and all $(\suvec{v}_{w,h},\underline{v}_{\compl,h})\in\SSigmaser{h}$,
\[
\text{
  $\EV{h}\underline{\widehat{q}}_h
  \coloneq
  \Egrad{h} \underline{\widehat{q}}_h$
  and
  $\ES{h}\suvec{v}_h
  \coloneq
  \Injrot\Erot{h}\suvec{v}_{w,h} + (\suvec{0},\underline{v}_{\compl,h})$,
}
\]
with $\Egrad{h}$ and $\Erot{h}$ respectively defined according to \eqref{eq:Egrad.T2} and \eqref{eq:Erot.T2}.

Using \eqref{eq:def.d}, the serendipity discrete differential operators are such that, for all $(\underline{\widehat{q}}_h,\suvec{v}_h)\in\SVser{h}\times\SSigmaser{h}$:
\[
\begin{gathered}
  \suGh{h} \underline{\widehat{q}}_h
  \coloneq\big(\sGddr{h}\underline{\widehat{q}}_h,\underline{0}\big),
  \\
  \suRh{h}\suvec{v}_h
  \coloneq \big(\sRddr{h}\suvec{v}_{w,h},\uRh{h}(\suvec{0},\underline{v}_{\compl,h})\big)\overset{\eqref{eq:uRh},\,\eqref{eq:S.D.Spaces}}=\big(\sRddr{h}\suvec{v}_{w,h},\underline{v}_{\compl,h}\big).
\end{gathered}
\]

\subsubsection{Serendipity rot-rot complex and isomorphism in cohomology}

The serendipity rot-rot complex is given by:
\begin{equation}\label{eq:d.srotrot.complex}  
  \begin{tikzpicture}[xscale=2, baseline={(H1head.base)}]
    \node at (-1,0) {Srot-rot:};
    
    \node (H1head) at (0,0) {$\SVser{h}$};
    \node (Xrotrot) at (1.5,0) {$\SSigmaser{h}$};
    \node (H1tail) at (3,0) {$\SW{h}$.};

    \draw[->,>=latex] (H1head) -- (Xrotrot) node[midway, above, font=\footnotesize]{$\suGh{h}$};
    \draw[->,>=latex] (Xrotrot) -- (H1tail) node[midway, above, font=\footnotesize]{$\suRh{h}$};
  \end{tikzpicture}
\end{equation}

\begin{theorem}[Homological properties of the complexes in \eqref{eq:construction:rot-rot}]\label{thm:homological.properties}  
All the complexes in the diagram \eqref{eq:construction:rot-rot} have cohomologies that are isomorphic to the cohomology of the continuous de Rham complex.
\end{theorem}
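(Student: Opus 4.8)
The plan is to recognize the diagram \eqref{eq:construction:rot-rot} as a concrete realization of the abstract diagram \eqref{eq:abstract.diagram} and then chain together the isomorphism results already at our disposal. First I would fix the correspondence between the two settings: $(W_i,\partial_i)_i$ is the two-dimensional DDR complex \eqref{eq:glob.2D-ddr.complex}, $(\widehat{W}_i,\widehat{\partial}_i)_i$ is its serendipity version SDDR2d, $(V_i,d_i)_i$ is the discrete rot-rot complex \eqref{eq:d.rotrot.complex}, and $(\widehat{V}_i,\widehat{d}_i)_i$ is the serendipity rot-rot complex \eqref{eq:d.srotrot.complex}. Correspondingly, the operators $\EX{W}{i},\ReW{W}{i}$ are played by $\Egrad{h},\Erot{h}$ and $\Rgrad{h},\Rrot{h}$, while $\Ext{i},\Red{i}$ are the identity at the head together with $\Injrot,\InjW{h}$ and $\Resrot,\ResW{h}$ at the two remaining positions.

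Next I would check that the spaces and operators introduced in Section~\ref{sec:discrete.rot-rotcomplex:serendipity} are precisely those produced by the abstract construction of Definition~\ref{def:serendipity.complex}. This holds by design: the spaces \eqref{eq:S.D.Spaces} realize \eqref{eq:def.v^} with $C_i=\Kernel\Resrot$ at the middle position and $C_i=\Kernel\ResW{h}$ at the tail; the differentials $\suGh{h},\suRh{h}$ follow \eqref{eq:def.d}; and $\sInjrot,\sResrot,\RV{h},\RS{h},\EV{h},\ES{h}$ are defined through \eqref{eq:def.sExt}--\eqref{eq:def.ReV}. In particular, at the tail one has $\widehat{V}_2=\XL{k}{h}\times\Kernel\ResW{h}\cong\SW{h}$, so that the serendipity and non-serendipity rot-rot complexes share the same tail space, consistently with \eqref{eq:construction:rot-rot}.

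With the identification in place, the two hypotheses of the abstract framework are already available: Assumption~\ref{Isom.W.W} holds for the DDR2d/SDDR2d pair by Lemma~\ref{lem:asm.SDDR}, and Assumption~\ref{Isom.W.V} holds for the DDR2d/rot-rot pair by Theorem~\ref{thm.hom.rotrot}. Invoking Corollary~\ref{Isom.V.V} then gives that the cohomologies of all four complexes in \eqref{eq:construction:rot-rot} are isomorphic, and Lemma~\ref{lem:asm.SDDR} identifies the cohomology of DDR2d---hence of all four---with that of the continuous de Rham complex, concluding the proof.

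The proof carries no analytical difficulty; the one point requiring care is bookkeeping, namely confirming that Assumption~\ref{Isom.W.V} is genuinely met at \emph{every} position of the three-term complex, including the tail. Theorem~\ref{thm.hom.rotrot} records the nontrivial instance \eqref{eq:complex.rot} of \ref{P:complex.ER} at the middle position; the corresponding tail instance is verified directly from the product structure of $\SW{h}$, since for $\underline{q}_h=(q_h,\underline{q}_{\compl,h})$ one has $(\InjW{h}\ResW{h}-\Id)\underline{q}_h=(\underline{0},-\underline{q}_{\compl,h})=\uRh{h}(\uvec{0},-\underline{q}_{\compl,h})\in\Image(\uRh{h})$. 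Once this is in hand, every substantive homological statement is delegated to the already-proven lemmas and to the abstract Corollary~\ref{Isom.V.V}.
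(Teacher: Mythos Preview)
Your proposal is correct and follows the same route as the paper's proof: verify Assumptions~\ref{Isom.W.W} and~\ref{Isom.W.V} via Lemma~\ref{lem:asm.SDDR} and Theorem~\ref{thm.hom.rotrot}, then invoke Corollary~\ref{Isom.V.V} and the identification with the continuous de~Rham cohomology. Your additional bookkeeping step for the tail instance of \ref{P:complex.ER} is a welcome clarification that the paper leaves implicit, but it does not change the overall strategy.
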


\begin{proof}
  Lemma \ref{lem:asm.SDDR} and Theorem \ref{thm.hom.rotrot} ensure that 
  Assumptions \ref{Isom.W.W} and \ref{Isom.W.V} are satisfied.
  We can therefore invoke Corollary \ref{Isom.V.V} to infer that the cohomology of the Srot-rot complex \eqref{eq:d.srotrot.complex} is isomorphic to the cohomology of the rot-rot complex \eqref{eq:d.rotrot.complex}, of the DDR2d complex \eqref{eq:glob.2D-ddr.complex}, and, therefore, of the continuous de Rham complex.  
\end{proof}

\subsection{Numerical examples}

In order to show the effect of serendipity DOF reduction, we consider the quad-rot problem of \cite[Section~5.2]{Di-Pietro:23} and compare the results obtained using the original and serendipity spaces in terms of error versus dimension of the linear system (after elimination of Dirichlet DOFs).
The errors are defined as the difference between the solution of the numerical scheme and the interpolate of the exact solution.
Specifically, denoting respectively by $(\uvec{u}_h, \underline{p}_h)$ and $(\suvec{u}_h, \underline{\widehat{p}}_h)$ the numerical solutions obtained using standard and serendipity spaces, we set
\[
\begin{alignedat}{3}
  \uvec{e}_h &\coloneq \uvec{u}_h - \ISigma{h} \bvec{u}
  &\qquad \underline{\varepsilon}_h &\coloneq 
  \underline{p}_h - \IV{h} p,
  \\
  \suvec{e}_h &\coloneq \suvec{u}_h - \sISigma{h} \bvec{u}
  &\qquad \underline{\widehat \varepsilon}_h &\coloneq \underline{\widehat p}_h - \sIV{h} p,  
\end{alignedat}
\]
where $\IV{h}$, $\sIV{h}$, $\ISigma{h}$, and $\sISigma{h}$ respectively denote the interpolators on $\SV{h}$, $\SVser{h}$, $\SSigma{h}$, and $\SSigmaser{h}$.
The errors are measured by $L^2$-like operator norms defined in the spirit of \cite[Section 4.4]{Di-Pietro.Droniou:23*2} and, consistently with \cite{Di-Pietro:23}, respectively denoted by $\norm{V,h}{\cdot}$ for $\SV{h}$ and $\SVser{h}$
and 
$\norm{\bvec{\Sigma},h}{\cdot}$ for $\SSigma{h}$ and $\SSigmaser{h}$ (we do not distinguish the notation for the norms on the standard and serendipity spaces, as they have formally the same expression and the exact meaning is made clear by the argument).
On the latter spaces, we additionally consider the norm $\norm{\VROT\ROT,h}{\cdot}$, an $L^2$-like norm of the discrete rot-rot operator defined as in \cite[Eq. (4.29)]{Di-Pietro:23}.
The problem data, meshes, and polynomial degrees are exactly the same as in the above reference, so we do not repeat these details here, while the number of edges $\eta_{T_1}$ for each edge $T_1 \in \mathcal{M}_{h,1}$ is chosen the same way as in \cite{Di-Pietro.Droniou:23*1}.
The various error measures displayed in Figures \ref{fig:cartesian.dof}--\ref{fig:hexagonal.dof} show that a given precision is invariably obtained with fewer DOFs using serendipity spaces, the more so the higher the degree.
A comparison in terms of error versus meshsize $h$, not reported here for the sake of conciseness, shows that the serendipity and non-serendipity schemes yield essentially the same solution for a given mesh and polynomial degree, with visible differences only for the pressure errors
$\norm{V,h}{\underline{\varepsilon}_h}$,
$\norm{V,h}{\underline{\widehat \varepsilon}_h}$,
$\norm{\svec{\Sigma},h}{\uGh{h} \underline{\varepsilon}_h}$,
and $\norm{\svec{\Sigma},h}{\uGh{h} \underline{\widehat \varepsilon}_h}$ for $k=3$.

%------------------------------------------------------------------------------%
% Cartesian orthogonal mesh family
%------------------------------------------------------------------------------%

\begin{figure}\centering
  {
    \hypersetup{hidelinks}
    \ref{leg:cartesian}
  }
  % \ref{leg:cartesian2}
  \vspace{0.5cm}\\
  \begin{minipage}{0.475\linewidth}\centering
    \begin{tikzpicture}[scale=0.90]
      \begin{loglogaxis}[
          legend columns=3,
          legend to name=leg:cartesian
        ]
        \addplot [mark=square*, black] table[x=DimLinSys,y=ErrUL2] {dat/ddr-quadrot/cart_k1/data_rates.dat};
        \addplot [mark=square*, red] table[x=DimLinSys,y=ErrUL2] {dat/ddr-quadrot/cart_k2/data_rates.dat};
        \addplot [mark=square*, cyan] table[x=DimLinSys,y=ErrUL2] {dat/ddr-quadrot/cart_k3/data_rates.dat};
        \addplot [mark=square*, mark options=solid, black, dashed] table[x=DimLinSys,y=ErrUL2] {dat/sddr-quadrot/cart_k1/data_rates.dat};
        \addplot [mark=square*, mark options=solid, red, dashed] table[x=DimLinSys,y=ErrUL2] {dat/sddr-quadrot/cart_k2/data_rates.dat};
        \addplot [mark=square*, mark options=solid, cyan, dashed] table[x=DimLinSys,y=ErrUL2] {dat/sddr-quadrot/cart_k3/data_rates.dat};

        \legend{$k=1$ (rot-rot), $k=2$ (rot-rot), $k=3$ (rot-rot), $k=1$ (Srot-rot), $k=2$ (Srot-rot), $k=3$ (Srot-rot)};
      \end{loglogaxis}
    \end{tikzpicture}
    \subcaption{$\norm{\bvec{\Sigma},h}{\bvec{e}_h}$ and $\norm{\bvec{\Sigma},h}{\suvec{e}_h}$}
  \end{minipage}
  \begin{minipage}{0.475\linewidth}\centering
    \begin{tikzpicture}[scale=0.90]
      \begin{loglogaxis}[legend columns=3]
        \addplot [mark=square*, black] table[x=DimLinSys,y=ErrURotRot] {dat/ddr-quadrot/cart_k1/data_rates.dat};
        \addplot [mark=square*, red] table[x=DimLinSys,y=ErrURotRot] {dat/ddr-quadrot/cart_k2/data_rates.dat};
        \addplot [mark=square*, cyan] table[x=DimLinSys,y=ErrURotRot] {dat/ddr-quadrot/cart_k3/data_rates.dat};
        \addplot [mark=square*, mark options=solid, black, dashed] table[x=DimLinSys,y=ErrURotRot] {dat/sddr-quadrot/cart_k1/data_rates.dat};
        \addplot [mark=square*, mark options=solid, red, dashed] table[x=DimLinSys,y=ErrURotRot] {dat/sddr-quadrot/cart_k2/data_rates.dat};
        \addplot [mark=square*, mark options=solid, cyan, dashed] table[x=DimLinSys,y=ErrURotRot] {dat/sddr-quadrot/cart_k3/data_rates.dat};

      \end{loglogaxis}
    \end{tikzpicture}
    \subcaption{$\norm{\VROT\ROT,h}{\uvec{e}_h}$ and $\norm{\VROT\ROT,h}{\suvec{e}_h}$}
  \end{minipage}
  \vspace{0.5cm}\\
  \begin{minipage}{0.475\linewidth}\centering
    \begin{tikzpicture}[scale=0.90]
      \begin{loglogaxis}
        \addplot [mark=square*, black] table[x=DimLinSys,y=ErrPL2] {dat/ddr-quadrot/cart_k1/data_rates.dat};
        \addplot [mark=square*, red] table[x=DimLinSys,y=ErrPL2] {dat/ddr-quadrot/cart_k2/data_rates.dat};
        \addplot [mark=square*, cyan] table[x=DimLinSys,y=ErrPL2] {dat/ddr-quadrot/cart_k3/data_rates.dat};
        \addplot [mark=square*, mark options=solid, black, dashed] table[x=DimLinSys,y=ErrPL2] {dat/sddr-quadrot/cart_k1/data_rates.dat};
        \addplot [mark=square*, mark options=solid, red, dashed] table[x=DimLinSys,y=ErrPL2] {dat/sddr-quadrot/cart_k2/data_rates.dat};
        \addplot [mark=square*, mark options=solid, cyan, dashed] table[x=DimLinSys,y=ErrPL2] {dat/sddr-quadrot/cart_k3/data_rates.dat};
      \end{loglogaxis}
    \end{tikzpicture}
    \subcaption{$\norm{V,h}{\underline{\varepsilon}_h}$ and $\norm{V,h}{\underline{\widehat{\varepsilon}}_h}$}
  \end{minipage}
  \begin{minipage}{0.475\linewidth}\centering
    \begin{tikzpicture}[scale=0.90]
      \begin{loglogaxis}
        \addplot [mark=square*, black] table[x=DimLinSys,y=ErrPGrad] {dat/ddr-quadrot/cart_k1/data_rates.dat};
        \addplot [mark=square*, red] table[x=DimLinSys,y=ErrPGrad] {dat/ddr-quadrot/cart_k2/data_rates.dat};
        \addplot [mark=square*, cyan] table[x=DimLinSys,y=ErrPGrad] {dat/ddr-quadrot/cart_k3/data_rates.dat};
        \addplot [mark=square*, mark options=solid, black, dashed] table[x=DimLinSys,y=ErrPGrad] {dat/sddr-quadrot/cart_k1/data_rates.dat};
        \addplot [mark=square*, mark options=solid, red, dashed] table[x=DimLinSys,y=ErrPGrad] {dat/sddr-quadrot/cart_k2/data_rates.dat};
        \addplot [mark=square*, mark options=solid, cyan, dashed] table[x=DimLinSys,y=ErrPGrad] {dat/sddr-quadrot/cart_k3/data_rates.dat};
      \end{loglogaxis}
    \end{tikzpicture}
    \subcaption{${\norm{\bvec{\Sigma},h}{\uGh{h} \underline{\varepsilon}_h}}$ and ${\norm{\bvec{\Sigma},h}{\suGh{h}\underline{\widehat{\varepsilon}}_h}}$}
  \end{minipage}
  \caption{Errors norm vs.\ linear system size using the standard (continuous lines) and serendipity spaces (dashed lines)  to solve the quad-rot problem of \cite[Section~5.2]{Di-Pietro:23} on the Cartesian orthogonal mesh family.\label{fig:cartesian.dof}}
\end{figure}

%------------------------------------------------------------------------------%
% Triangular mesh family
%------------------------------------------------------------------------------%

\begin{figure}\centering
  {
    \hypersetup{hidelinks}
    \ref{leg:triangular}
  }
  %\ref{leg:triangular2}
  \vspace{0.5cm}\\
  \begin{minipage}{0.475\linewidth}\centering
    \begin{tikzpicture}[scale=0.90]
      \begin{loglogaxis}[
          legend columns=3,
          legend to name=leg:triangular
        ]
        \addplot [mark=square*, black] table[x=DimLinSys,y=ErrUL2] {dat/ddr-quadrot/tri_k1/data_rates.dat};
        \addplot [mark=square*, red] table[x=DimLinSys,y=ErrUL2] {dat/ddr-quadrot/tri_k2/data_rates.dat};
        \addplot [mark=square*, cyan] table[x=DimLinSys,y=ErrUL2] {dat/ddr-quadrot/tri_k3/data_rates.dat};
        \addplot [mark=square*, mark options=solid, black, dashed] table[x=DimLinSys,y=ErrUL2] {dat/sddr-quadrot/tri_k1/data_rates.dat};
        \addplot [mark=square*, mark options=solid, red, dashed] table[x=DimLinSys,y=ErrUL2] {dat/sddr-quadrot/tri_k2/data_rates.dat};
        \addplot [mark=square*, mark options=solid, cyan, dashed] table[x=DimLinSys,y=ErrUL2] {dat/sddr-quadrot/tri_k3/data_rates.dat};
        
        \legend{$k=1$ (rot-rot), $k=2$ (rot-rot), $k=3$ (rot-rot), $k=1$ (Srot-rot), $k=2$ (Srot-rot), $k=3$ (Srot-rot)};
      \end{loglogaxis}
    \end{tikzpicture}
    \subcaption{$\norm{\bvec{\Sigma},h}{\uvec{e}_h}$ and $\norm{\bvec{\Sigma},h}{\suvec{e}_h}$}
  \end{minipage}
  \begin{minipage}{0.475\linewidth}\centering
    \begin{tikzpicture}[scale=0.90]
      \begin{loglogaxis}[legend columns=3]
        \addplot [mark=square*, black] table[x=DimLinSys,y=ErrURotRot] {dat/ddr-quadrot/tri_k1/data_rates.dat};
        \addplot [mark=square*, red] table[x=DimLinSys,y=ErrURotRot] {dat/ddr-quadrot/tri_k2/data_rates.dat};
        \addplot [mark=square*, cyan] table[x=DimLinSys,y=ErrURotRot] {dat/ddr-quadrot/tri_k3/data_rates.dat};
        \addplot [mark=square*, mark options=solid, black, dashed] table[x=DimLinSys,y=ErrURotRot] {dat/sddr-quadrot/tri_k1/data_rates.dat};
        \addplot [mark=square*, mark options=solid, red, dashed] table[x=DimLinSys,y=ErrURotRot] {dat/sddr-quadrot/tri_k2/data_rates.dat};
        \addplot [mark=square*, mark options=solid, cyan, dashed] table[x=DimLinSys,y=ErrURotRot] {dat/sddr-quadrot/tri_k3/data_rates.dat};
      \end{loglogaxis}
    \end{tikzpicture}
    \subcaption{$\norm{\VROT\ROT,h}{\uvec{e}_h}$ and $\norm{\VROT\ROT,h}{\suvec{e}_h}$}
  \end{minipage}
  \vspace{0.5cm}\\
  \begin{minipage}{0.475\linewidth}\centering
    \begin{tikzpicture}[scale=0.90]
      \begin{loglogaxis}
        \addplot [mark=square*, black] table[x=DimLinSys,y=ErrPL2] {dat/ddr-quadrot/tri_k1/data_rates.dat};
        \addplot [mark=square*, red] table[x=DimLinSys,y=ErrPL2] {dat/ddr-quadrot/tri_k2/data_rates.dat};
        \addplot [mark=square*, cyan] table[x=DimLinSys,y=ErrPL2] {dat/ddr-quadrot/tri_k3/data_rates.dat};
        \addplot [mark=square*, mark options=solid, black, dashed] table[x=DimLinSys,y=ErrPL2] {dat/sddr-quadrot/tri_k1/data_rates.dat};
        \addplot [mark=square*, mark options=solid, red, dashed] table[x=DimLinSys,y=ErrPL2] {dat/sddr-quadrot/tri_k2/data_rates.dat};
        \addplot [mark=square*, mark options=solid, cyan, dashed] table[x=DimLinSys,y=ErrPL2] {dat/sddr-quadrot/tri_k3/data_rates.dat};
      \end{loglogaxis}
    \end{tikzpicture}
    \subcaption{$\norm{V,h}{\underline{\varepsilon}_h}$ and $\norm{V,h}{\underline{\widehat \varepsilon}_h} $}
  \end{minipage}
  \begin{minipage}{0.475\linewidth}\centering
    \begin{tikzpicture}[scale=0.90]
      \begin{loglogaxis}
        \addplot [mark=square*, black] table[x=DimLinSys,y=ErrPGrad] {dat/ddr-quadrot/tri_k1/data_rates.dat};
        \addplot [mark=square*, red] table[x=DimLinSys,y=ErrPGrad] {dat/ddr-quadrot/tri_k2/data_rates.dat};
        \addplot [mark=square*, cyan] table[x=DimLinSys,y=ErrPGrad] {dat/ddr-quadrot/tri_k3/data_rates.dat};
        \addplot [mark=square*, mark options=solid, black, dashed] table[x=DimLinSys,y=ErrPGrad] {dat/sddr-quadrot/tri_k1/data_rates.dat};
        \addplot [mark=square*, mark options=solid, red, dashed] table[x=DimLinSys,y=ErrPGrad] {dat/sddr-quadrot/tri_k2/data_rates.dat};
        \addplot [mark=square*, mark options=solid, cyan, dashed] table[x=DimLinSys,y=ErrPGrad] {dat/sddr-quadrot/tri_k3/data_rates.dat};
      \end{loglogaxis}
    \end{tikzpicture}
    \subcaption{${\norm{\bvec{\Sigma},h}{\uGh{h} \underline{\varepsilon}_h}}$ and ${\norm{\bvec{\Sigma},h}{\suGh{h}\underline{\widehat \varepsilon}_h}}$}
  \end{minipage}
  \caption{Errors norm vs.\ linear system size using the standard (continuous lines) and serendipity spaces (dashed lines)  to solve the quad-rot problem of \cite[Section~5.2]{Di-Pietro:23} on the triangular mesh family.\label{fig:triangular.dof}}
\end{figure}

%------------------------------------------------------------------------------%
% Hexagonal mesh family
%------------------------------------------------------------------------------%

\begin{figure}\centering
  {
    \hypersetup{hidelinks}
    \ref{leg:hexagonal}
  }
  %\ref{leg:triangular2}
  \vspace{0.5cm}\\
  \begin{minipage}{0.475\linewidth}\centering
    \begin{tikzpicture}[scale=0.90]
      \begin{loglogaxis}[
          legend columns=3,
          legend to name=leg:hexagonal
        ]
        \addplot [mark=square*, black] table[x=DimLinSys,y=ErrUL2] {dat/ddr-quadrot/hexa_k1/data_rates.dat};
        \addplot [mark=square*, red] table[x=DimLinSys,y=ErrUL2] {dat/ddr-quadrot/hexa_k2/data_rates.dat};
        \addplot [mark=square*, cyan] table[x=DimLinSys,y=ErrUL2] {dat/ddr-quadrot/hexa_k3/data_rates.dat};
        \addplot [mark=square*, mark options=solid, black, dashed] table[x=DimLinSys,y=ErrUL2] {dat/sddr-quadrot/hexa_k1/data_rates.dat};
        \addplot [mark=square*, mark options=solid, red, dashed] table[x=DimLinSys,y=ErrUL2] {dat/sddr-quadrot/hexa_k2/data_rates.dat};
        \addplot [mark=square*, mark options=solid, cyan, dashed] table[x=DimLinSys,y=ErrUL2] {dat/sddr-quadrot/hexa_k3/data_rates.dat};
        \legend{$k=1$ (rot-rot), $k=2$ (rot-rot), $k=3$ (rot-rot), $k=1$ (Srot-rot), $k=2$ (Srot-rot), $k=3$ (Srot-rot)};
      \end{loglogaxis}
    \end{tikzpicture}
    \subcaption{$\norm{\bvec{\Sigma},h}{\uvec{e}_h}$ and $\norm{\bvec{\Sigma},h}{\suvec{e}_h}$}
  \end{minipage}
  \begin{minipage}{0.475\linewidth}\centering
    \begin{tikzpicture}[scale=0.90]
      \begin{loglogaxis}[legend columns=3]
        \addplot [mark=square*, black] table[x=DimLinSys,y=ErrURotRot] {dat/ddr-quadrot/hexa_k1/data_rates.dat};
        \addplot [mark=square*, red] table[x=DimLinSys,y=ErrURotRot] {dat/ddr-quadrot/hexa_k2/data_rates.dat};
        \addplot [mark=square*, cyan] table[x=DimLinSys,y=ErrURotRot] {dat/ddr-quadrot/hexa_k3/data_rates.dat};
        \addplot [mark=square*, mark options=solid, black, dashed] table[x=DimLinSys,y=ErrURotRot] {dat/sddr-quadrot/hexa_k1/data_rates.dat};
        \addplot [mark=square*, mark options=solid, red, dashed] table[x=DimLinSys,y=ErrURotRot] {dat/sddr-quadrot/hexa_k2/data_rates.dat};
        \addplot [mark=square*, mark options=solid, cyan, dashed] table[x=DimLinSys,y=ErrURotRot] {dat/sddr-quadrot/hexa_k3/data_rates.dat};
      \end{loglogaxis}
    \end{tikzpicture}
    \subcaption{$\norm{\VROT\ROT,h}{\uvec{e}_h}$ and $\norm{\VROT\ROT,h}{\suvec{e}_h}$}
  \end{minipage}
  \vspace{0.5cm}\\
  \begin{minipage}{0.475\linewidth}\centering
    \begin{tikzpicture}[scale=0.90]
      \begin{loglogaxis}
        \addplot [mark=square*, black] table[x=DimLinSys,y=ErrPL2] {dat/ddr-quadrot/hexa_k1/data_rates.dat};
        \addplot [mark=square*, red] table[x=DimLinSys,y=ErrPL2] {dat/ddr-quadrot/hexa_k2/data_rates.dat};
        \addplot [mark=square*, cyan] table[x=DimLinSys,y=ErrPL2] {dat/ddr-quadrot/hexa_k3/data_rates.dat};
        \addplot [mark=square*, mark options=solid, black, dashed] table[x=DimLinSys,y=ErrPL2] {dat/sddr-quadrot/hexa_k1/data_rates.dat};
        \addplot [mark=square*, mark options=solid, red, dashed] table[x=DimLinSys,y=ErrPL2] {dat/sddr-quadrot/hexa_k2/data_rates.dat};
        \addplot [mark=square*, mark options=solid, cyan, dashed] table[x=DimLinSys,y=ErrPL2] {dat/sddr-quadrot/hexa_k3/data_rates.dat};
      \end{loglogaxis}
    \end{tikzpicture}
    \subcaption{$\norm{V,h}{\underline{\varepsilon}_h}$ and $\norm{V,h}{\underline{\widehat \varepsilon}_h}$}
  \end{minipage}
  \begin{minipage}{0.475\linewidth}\centering
    \begin{tikzpicture}[scale=0.90]
      \begin{loglogaxis}
        \addplot [mark=square*, black] table[x=DimLinSys,y=ErrPGrad] {dat/ddr-quadrot/hexa_k1/data_rates.dat};
        \addplot [mark=square*, red] table[x=DimLinSys,y=ErrPGrad] {dat/ddr-quadrot/hexa_k2/data_rates.dat};
        \addplot [mark=square*, cyan] table[x=DimLinSys,y=ErrPGrad] {dat/ddr-quadrot/hexa_k3/data_rates.dat};
        \addplot [mark=square*, mark options=solid, black, dashed] table[x=DimLinSys,y=ErrPGrad] {dat/sddr-quadrot/hexa_k1/data_rates.dat};
        \addplot [mark=square*, mark options=solid, red, dashed] table[x=DimLinSys,y=ErrPGrad] {dat/sddr-quadrot/hexa_k2/data_rates.dat};
        \addplot [mark=square*, mark options=solid, cyan, dashed] table[x=DimLinSys,y=ErrPGrad] {dat/sddr-quadrot/hexa_k3/data_rates.dat};
      \end{loglogaxis}
    \end{tikzpicture}
    \subcaption{${\norm{\bvec{\Sigma},h}{\uGh{h} \underline{\varepsilon}_h}}$ and ${\norm{\bvec{\Sigma},h}{\suGh{h}\underline{\widehat \varepsilon}_h}}$}
  \end{minipage}
  \caption{Errors norm vs.\ linear system size using the standard (continuous lines) and serendipity spaces (dashed lines)  to solve the quad-rot problem of \cite[Section~5.2]{Di-Pietro:23} on the hexagonal mesh family.\label{fig:hexagonal.dof}}
\end{figure}

%------------------------------------------------------------------------------%

\section{A serendipity Stokes complex}\label{sec:stokes}

In this section we discuss a second application of the general construction considering the three-dimensional Stokes complex, another smoother variant of the three-dimensional de Rham complex.
Let $\Omega \subset \mathbb{R}^3$ be a polyhedral domain.
The Stokes complex reads:
\begin{equation} \label{eq:Stokescomplex}
  \begin{tikzpicture}[xscale=2, baseline={(H2.base)}]
    \node (H2) at (0,0) {$H^2(\Omega)$};
    \node (H1curl) at (1.5,0) {$\bvec{H}^1(\CURL;\Omega)$};
    \node (H1) at (3,0) {$\bvec{H}^1(\Omega)$};
    \node (L2) at (4.5,0) {$L^2(\Omega)$.};

    \draw[->,>=latex] (H2) -- (H1curl) node[midway, above, font=\footnotesize]{$\GRAD$};
    \draw[->,>=latex] (H1curl) -- (H1) node[midway, above, font=\footnotesize]{$\CURL$};
    \draw[->,>=latex] (H1) -- (L2) node[midway, above, font=\footnotesize]{$\DIV$};
  \end{tikzpicture}
\end{equation}
Diagram~\eqref{eq:abstract.diagram} specialized to the present case becomes
\begin{equation}\label{eq:construction:stokes}
  \resizebox{\textwidth}{!}{%
    \begin{tikzpicture}[xscale=2,yscale=2.5]
      %Stokes spaces
      \node at (1,0,-2) {Stokes:};
      \node (SGr) at (2,0,-2) {$\SGr{h}$};
      \node (SCr) at (4,0,-2) {$\SCr{h}$};
      \node (SDr) at (6,0,-1) {$\SDr{h}$};
      %\node (VL2) at (8,0,-1) {$V_{L^2,h}^k$};

    %Serendipity Stokes spaces
    \node at (1,0,0) {SStokes:};
    \node (SSGr) at (2,0,0) {$\SSGr{h}$};
    \node (SSCr) at (4,0,0) {$\SSCr{h}$};

    %SDDR
    \node at (1,2,0) {SDDR3d:};
    \node (SXGrad) at (2,2,0) {$\SXgrad{k}{h}$};
    \node (SXCurl) at (4,2,0) {$\SXcurl{h}$};
    \node (Xdiv) at (6,2,-1) {$\Xdiv{h}$};
    \node (Polyk) at (8,1,-1) {$\XL{k}{h}$};
    
    %DDR
    \node at (1,2,-2) {DDR3d:};    
    \node (XGrad) at (2,2,-2) {$\Xgrad{k-1,k-1,k}{h}$};
    \node (XCurl) at (4,2,-2) {$\Xcurl{k,k,k}{h}$};

    %SStokes complex
    \draw [->,>=latex] (SSGr) -- (SSCr) node[midway, below,font=\footnotesize] {$\suGh{h}$};
    \draw [->,>=latex] (SSCr) -- (SDr) node[midway, below,font=\footnotesize] {$\suCh{h}$};
        
    %Stokes complex
    \draw [->,>=latex] (SGr) -- (SCr) node[midway, above,font=\footnotesize] {$\uGh{h}$};
    \draw [->,>=latex] (SCr) -- (SDr) node[midway, above,font=\footnotesize] {$\uCh{h}$};
    \draw [->,>=latex] (SDr) -- (Polyk) node[midway, below,font=\footnotesize] {$\uDh{h}$};

    %SDDR
    \draw [->,>=latex] (SXGrad) -- (SXCurl) node[midway, below,font=\footnotesize] {$\sGddr{h}$};
    \draw [->,>=latex] (SXCurl) -- (Xdiv) node[midway, below,font=\footnotesize] {$\sCddr{h}$};
        
    %DDR
    \draw [->,>=latex] (XGrad) -- (XCurl) node[midway, above,font=\footnotesize] {$\Gddr{h}$};
    \draw [->,>=latex] (XCurl) -- (Xdiv) node[midway, above,font=\footnotesize] {$\Cddr{h}$};
    \draw [->,>=latex] (Xdiv) -- (Polyk) node[midway, above,font=\footnotesize] {$\Dh{h}$};
    
    %Stokes-SStokes
    \draw [->,>=latex] (SSGr) to [bend right=20] node[midway, right, font=\footnotesize] {$\EV{h}$} (SGr) ;
    \draw [->,dashed] (SGr) to [bend right=20] node[midway, left, font=\footnotesize] {$\RV{h}$} (SSGr);
    \draw [->,>=latex] (SSCr) to [bend right=20] node[midway, right, font=\footnotesize] {$\uvec{E}_{V,\CURL,h}$} (SCr) ;
    \draw [->,dashed] (SCr) to [bend right=20] node[midway, left, font=\footnotesize] {$\suvec{R}_{V,\CURL,h}$} (SSCr);
        
    %DDR-SDDR
    \draw [->,>=latex] (SXGrad) to [bend right=20] node[midway, right, font=\footnotesize] {$\Egrad{h}$} (XGrad);
    \draw [->,dashed] (XGrad) to [bend right=20] node[midway, left, font=\footnotesize] {$\Rgrad{h}$} (SXGrad);
    \draw [->,>=latex] (SXCurl) to [bend right=20] node[midway, right, font=\footnotesize] {$\Ecurl{h}$} (XCurl);
    \draw [->,dashed] (XCurl) to [bend right=20] node[midway, left, font=\footnotesize] {$\Rcurl{h}$} (SXCurl);
    
    %DDR-Stokes
    \draw [->] (XCurl) to [bend left=20] node[pos=0.6, right,font=\footnotesize] {$\Injcurl$} (SCr);
    \draw [->,dashed] (SCr) to [bend left=20] node[pos=0.4, left,font=\footnotesize]  {$\Rescurl$} (XCurl);
    \draw [->] (XGrad) to [bend left=20] node[pos=0.6, right,font=\footnotesize] {$\Injgrad$} (SGr);
    \draw [->,dashed] (SGr) to [bend left=20] node[pos=0.4, left,font=\footnotesize]  {$\Resgrad$} (XGrad);
    \draw [->] (Xdiv) to [bend left=20] node[pos=0.6, right,font=\footnotesize] {$\Injdiv$} (SDr);
    \draw [->,dashed] (SDr) to [bend left=20] node[pos=0.4, left,font=\footnotesize]  {$\Resdiv$} (Xdiv);
    
    %SDDR-SStokes  
    \draw [->] (SXCurl) to [bend left=20] node[pos=0.4, right,font=\footnotesize] {$\sInjcurl$} (SSCr);
    \draw [->,dashed] (SSCr) to [bend left=20] node[pos=0.6, left,font=\footnotesize]  {$\sRescurl$} (SXCurl);
    \draw [->] (SXGrad) to [bend left=20] node[pos=0.4, right,font=\footnotesize] {$\sInjgrad$} (SSGr);
    \draw [->,dashed] (SSGr) to [bend left=20] node[pos=0.6, left,font=\footnotesize]  {$\sResgrad$} (SXGrad);
    
  \end{tikzpicture}
  }
\end{equation}
The top horizontal portion of this diagram corresponds to \eqref{eq:ddr3d.sddr3d}.
In the rest of this section we will provide precise definitions of the remaining spaces and operators involved in the construction.

\subsection{Discrete Stokes complex}

We will start by giving a brief overview of the construction of a discrete counterpart of the complex \eqref{eq:Stokescomplex} developed in \cite{Hanot:23}.

\subsubsection{Discrete spaces}

For each edge $T_1 \in \mathcal{M}_{1,h}$, we will need the following space spanned by vector-valued polynomial functions that are normal to $T_1$:
\[
\TPoly{k}
\coloneq \left\lbrace
p_1 \bvec{n}_1 + p_2 \bvec{n}_2 \st p_1, p_2 \in \Poly{k}(T_1)
\right\rbrace,
\]
where $\bvec{n}_1$ and $\bvec{n}_2$ are two arbitrary orthogonal unit vectors normal to $T_1$.
The discrete counterparts of the spaces $H^2(\Omega)$, $\bvec{H}^1(\CURL;\Omega)$, $\bvec{H}^1(\Omega)$, and $L^2(\Omega)$ read:
\[
\SGr{h} \coloneq \Xgrad{k-1,k}{h} \times \SGr{\compl,h},\quad
\SCr{h} \coloneq \Xcurl{k,k}{h} \times \SCr{\compl,h},\quad
\SDr{h} \coloneq \Xdiv{h} \times \SDr{\compl,h}
\]
where the additional components with respect to the standard three-dimensional DDR spaces are given by
\begin{align*}
  %% \SGr{h} &\coloneq\Xgrad{k-1,k}{h}\times\Bigg(
  \SGr{\compl,h} &\coloneq
  \bigtimes_{T_2\in\mathcal{M}_{2,h}}\Poly{k-1}(T_2) \times
  \bigtimes_{T_1\in\mathcal{M}_{1,h}}\TPoly{k}\times\Real^{3\mathcal{M}_{0,h}},
  \\
  %% \SCr{h} &\coloneq\Xcurl{k,k}{h}\times
  \SCr{\compl,h} &\coloneq
  \begin{aligned}[t]
    &
      \bigtimes_{T_2\in\mathcal{M}_{2,h}}\left(
      \Poly{k-1}(T_2)\times\Goly{k}(T_2)\times\cGoly{k}(T_2)
      \right)
    \\
    &\times
    \bigtimes_{T_1\in\mathcal{M}_{1,h}}\left(
      \vPoly{k+1}(T_1;\Real^3)\times\TPoly{k}
    \right)\times\left(
    \Real^{3\mathcal{M}_{0,h}}
    \right)^2,
  \end{aligned}
  \\
  %% \SDr{h} &\coloneq\Xdiv{h}\times \Bigg[
  \SDr{\compl,h} &\coloneq 
    \bigtimes_{T_2\in\mathcal{M}_{2,h}}\left(
    \Goly{k}(T_2)\times\cGoly{k}(T_2)
    \right) \times \bigtimes_{T_1\in\mathcal{M}_{1,h}}\widetilde{\mathcal{P}}^{k+3}(T_1;\Real^3),
\end{align*}
where, to write $\SCr{h}$, we have decomposed the space $\widetilde{\mathcal{P}}^{k+2}(\mathcal{M}_{1,h};\Real^3)$ in \cite[Definition~(3.3)]{Hanot:23} as $\bigtimes_{T_1\in\mathcal{M}_{1,h}}\left(\Poly{k}(T_1)\times\TPoly{k}\right)\times\Real^{3\mathcal{M}_{0,h}}$ and $\widetilde{\mathcal{P}}^{m}(T_1;\Real^3)$ denotes the space of vector-valued functions over $T_1$ whose components are in $\lPoly{m}(T_1)$ and are continuous on $T_1$.

\subsubsection{Discrete gradient}

Let $\underline{q}_h=(\underline{q}_{w,h},\underline{q}_{\compl,h})\in\SGr{h}$ with

\[
\underline{q}_{\compl,h} \coloneq \left( (G_{q,T_2})_{T_2\in\mathcal{M}_{2,h}}, (\bvec{G}_{q,T_1})_{T_1\in\mathcal{M}_{1,h}}, (\bvec{G}_{q,T_0})_{T_0\in\mathcal{M}_{0,h}} \right)
\in \SGr{\compl,h},
\]
  where 
    $G_{q,T_2}$, $\bvec{G}_{q,T_1}$, and $\bvec{G}_{q,T_0}$
  have, respectively, the meaning of
  a normal gradient to the face $T_2$,
  a normal gradient to the edge $T_1$,
  and a full gradient at the vertex $T_0$.
The DDR discrete gradient is completed to map from $\SGr{h}$ to $\SCr{h}$ by adding the following component:
\[
\begin{aligned}
  \uvec{d}_{\GRAD,\compl,h}^k\underline{q}_{\compl,h}\coloneq \Big(
  &(G_{q,T_2}, \Gproj{k}{T_2} \bvec{RG}^k_{T_2} \underline{q}_{\compl,T_2}, \Gcproj{k}{T_2} \bvec{RG}^k_{T_2} \underline{q}_{\compl,T_2})_{T_2\in\mathcal{M}_{2,h}},
  \\
  &(\bvec{G}_{q,T_1},\bvec{v}_{T_1}'\times\bvec{t}_{T_1})_{T_1\in\mathcal{M}_{1,h}},
  \\
  &(\bvec{G}_{q,T_0},\bvec{0})_{T_0\in\mathcal{M}_{0,h}}
  \Big) \in \SCr{\compl,h},
\end{aligned}
\]
where $\underline{q}_{\compl,T_2}$ is the restriction of $\underline{q}_{\compl,h}$ to the elements neighbooring $T_2$, 
$\bvec{RG}^k_{T_2}$ is the rotor of the normal gradient defined by
\[
\int_{T_2} \bvec{RG}^k_{T_2} \underline{q}_{T_2} \cdot \bvec{w} = - \int_{T_2} G_{q,T_2} \ROT \bvec{w} - \sum_{T_1 \in \mathcal{M}_{1,T_2}} \omega_{T_2T_1} \int_{T_1} (\bvec{G}_{q,T_1} \cdot \normal_{T_2} ) (\bvec{w} \cdot \tangent_{T_1})
\quad \forall \bvec{w} \in \vPoly{k}(T_2),
\] 
and $\bvec{v}_{T_1}'$ is the derivative along the edge $T_1$ 
of the function $\bvec{v}_{T_1}$ such that $\bvec{\pi}^k_{\bvec{\mathcal{P}},T_1}\bvec{v}_{T_1} = \bvec{G}_{q,{T_1}}$
and for all $T_0\in\mathcal{M}_{0,T_1}$, $\bvec{v}_{T_1}(\bvec{x}_{T_0}) = \bvec{G}_{q,T_0}$.
The discrete gradient $\uGh{h} : \SGr{h} \to \SCr{h}$ is then given by
\begin{equation}\label{eq:uGh2}
  \uGh{h}~\underline{q}_h\coloneq\left(
  \Gddr{h}\underline{q}_{w,h},\uvec{d}_{\GRAD,\compl,h}^k\underline{q}_{\compl,h}
  \right).  
\end{equation}

\subsubsection{Discrete curl}

For $\uvec{v}_h=(\uvec{v}_{w,h},\uvec{v}_{\compl,h})\in\SCr{h}$, the component $\uvec{v}_{\compl,h}$ is given by
\[
\uvec{v}_{\compl,h}\coloneq\left((v_{T_2},\bvec{R}_{v,\cvec{G},{T_2}},\bvec{R}^c_{v,\cvec{G},T_2})_{T_2\in\mathcal{M}_{2,h}}, (\bvec{R}_{v,T_1},\bvec{v}_{n,T_1})_{T_1\in\mathcal{M}_{1,h}}, (\bvec{v}_{T_0}, \bvec{R}_{v,T_0})_{T_0\in\mathcal{M}_{0,h}} \right)
\in \SCr{\compl,h},
\]
where $v_{T_2}$, $(\bvec{R}_{v,\cvec{G},{T_2}},\bvec{R}^c_{v,\cvec{G},T_2})$, $\bvec{R}_{v,T_1}$, and $(v_{T_0}\bvec{R}_{v,T_0})$ have, respectively, the meaning of the normal flux accross the face $T_2$, 
the normal gradient of the tangential components to the face $T_2$, 
the tangential component of the curl plus the normal gradient of the tangential component to the edge $T_1$, and the value of the function and of its curl at the vertex $T_0$.
  The discrete curl in the DDR complex \eqref{eq:glob.3d-ddr.complex} is completed by adding the following component in order to obtain a map from $\SCr{h}$ to $\SDr{h}$:
\[
\begin{aligned}
  \uvec{d}_{\CURL,\compl,h}^k\uvec{v}_{\compl,h}\coloneq
  \Big(
  &(\Gproj{k}{T_2} \bvec{C}^k_{T_2} \uvec{v}_{\compl,T_2},\bvec{R}_{v,\cvec{G},T_2}, \Gcproj{k}{T_2}\bvec{C}^k_{T_2} \uvec{v}_{\compl,T_2},\bvec{R}^c_{v,\cvec{G},T_2})_{T_2\in\mathcal{M}_{2,h}},
  \\
  &(\bvec{C}^k_{T_1} \uvec{v}_{\compl,T_1})_{T_1\in\mathcal{M}_{1,h}}
  \Big) \in \SDr{\compl,h},
\end{aligned}
\]
where $\uvec{v}_{\compl,T_2}$ is the restriction of $\uvec{v}_{\compl,h}$ to the elements sharing $T_2$,
$\uvec{v}_{\compl,T_1}$ the restriction of $\uvec{v}_{\compl,h}$ to the elements sharing $T_1$, $\bvec{C}^k_{T_2} $ is the face curl defined in \eqref{eq:vCF}, and $\bvec{C}^k_{T_1}$ is such that $\bvec{C}^k_{T_1} \uvec{v}_{\compl,T_1} (\bvec{x}_{T_0})= \bvec{R}_{v,T_0}$ and $\vlproj{k+1}{T_1}\bvec{C}^k_{T_1} \uvec{v}_{\compl,T_1}= \bvec{R}_{v,T_1}- \bvec{v}_{\bvec{n},T_1}' ~\times ~{\tangent}_{T_1}$,
with $\bvec{v}_{\bvec{n},T_1}$ such that 
$\vlproj{k}{T_1}\bvec{v}_{\bvec{n},T_1} = \bvec{v}_{n,T_1}$
and for all $T_0\in\mathcal{M}_{0,T_1}$, $\bvec{v}_{\bvec{n},T_1}(\bvec{x}_{T_0}) = \bvec{v}_{T_0}$.
The discrete curl is then given by 
\begin{align}\label{eq:uCh}
  \uCh{h}~\uvec{v}_h&\coloneq\left(
  \Cddr{h}\uvec{v}_{w,h},\uvec{d}_{\CURL,\compl,h}^k\uvec{v}_{\compl,h}
  \right).
\end{align}

\subsubsection{Discrete divergence}

The discrete divergence is nothing but the original DDR divergence defined by \eqref{eq:ddr.3d:operators} but with domain $\SDr{h}$ instead of $\Xdiv{h}$:
For all $\uvec{w}_h=(\uvec{w}_{w,h},\uvec{w}_{\compl,h})\in\SDr{h}$,
\[
  \uDh{h}~\uvec{w}_h \coloneq \Dh{h}\uvec{w}_{w,h}.
\]

\subsubsection{Discrete Stokes complex}

The discrete counterpart of the Stokes complex \eqref{eq:Stokescomplex} %
  which appears at the bottom and back of diagram \eqref{eq:construction:stokes}
is given by:
\begin{equation}\label{eq:d.stokes.complex}  
  \begin{tikzpicture}[xscale=2, baseline={(H2.base)}]
    \node at (-1,0) {Stokes:};
    
    \node (H2) at (0,0) {$\SGr{h}$};
    \node (H1curl) at (1.5,0) {$\SCr{h}$};
    \node (H1) at (3,0) {$\SDr{h}$};
    \node (L2) at (4.5,0) {$\XL{k}{h}$.};

    \draw[->,>=latex] (H2) -- (H1curl) node[midway, above, font=\footnotesize]{$\uGh{h}$};
    \draw[->,>=latex] (H1curl) -- (H1) node[midway, above, font=\footnotesize]{$\uCh{h}$};
    \draw[->,>=latex] (H1) -- (L2) node[midway, above, font=\footnotesize]{$\uDh{h}$};
  \end{tikzpicture}
\end{equation}

\subsection{Extension and reduction maps between the three-dimensional DDR and Stokes complexes}

We next define extension and reduction operators between the three-dimensional DDR complex \eqref{eq:glob.3d-ddr.complex} and the discrete Stokes complex \eqref{eq:d.stokes.complex} that satisfy Assumption \ref{Isom.W.V}.
The proof is similar to that of Theorem \ref{thm.hom.rotrot} and is omitted for the sake of brevity.
It follows once again from Remark~\ref{rem:iso.cohom} that \eqref{eq:glob.3d-ddr.complex} and \eqref{eq:d.stokes.complex} have isomorphic cohomologies.

The extension operators are such that:
For all $\underline{q}_{w,h}\in\Xgrad{k-1,k-1,k}{h}$,
all $\uvec{v}_{w,h}\in\Xcurl{k,k,k}{h}$,
and all $\uvec{w}_{w,h}\in\Xdiv{h}$,
\[
\Injgrad \underline{q}_{w,h} \coloneq \big( \underline{q}_{w,h},\underline{0}\big),\quad
\Injcurl\uvec{v}_{w,h} \coloneq \big( \uvec{v}_{w,h},\uvec{0}\big),\quad
\Injdiv \uvec{w}_{w,h} \coloneq \big( \uvec{w}_{w,h},\uvec{0}\big).
\]
The reduction map is such that, for all $\underline{q}_h=(\underline{q}_{w,h},\underline{q}_{\compl,h})\in\SGr{h}$,
all $\uvec{v}_h=(\uvec{v}_{w,h},\uvec{v}_{\compl,h})\in\SCr{h}$,
and all $\uvec{w}_h=(\uvec{w}_{w,h},\uvec{w}_{\compl,h})\in\SDr{h}$,
\[
\Resgrad\underline{q}_h \coloneq \underline{q}_{w,h},\quad
\Rescurl\uvec{v}_h \coloneq \uvec{v}_{w,h},\quad
\Resdiv\uvec {w}_h \coloneq \uvec{w}_{w,h}.
\]
  For future reference, we note the following isomorphisms, which are a direct consequence of the above definitions:
  \begin{equation}\label{eq:isomorphism.Res.grad.curl}
    \text{$\Kernel \Resgrad \cong \SGr{\compl,h}$ and $\Kernel \Rescurl \cong \SCr{\compl,h}$}.
  \end{equation}

\subsection{Serendipity Stokes complex and homological properties}\label{sec:discrete.stokescomplex:serendipity}

Applying the construction of Section~\ref{sec:gen} to the Stokes complex and recalling the isomorphisms \eqref{eq:isomorphism.Res.grad.curl}, we obtain the following serendipity version of the spaces $\SGr{h}$ and $\SCr{h}$:
\begin{equation}\label{eq:S.S.Spaces}
  \SSGr{h} \coloneq \SXgrad{k}{h} \times \SGr{\compl,h},\quad
  \SSCr{h} \coloneq \SXcurl{h} \times \SCr{\compl,h},  
\end{equation}
where $\SXgrad{k}{h}$ and $\SXcurl{h}$ are the serendipity DDR spaces defined by \eqref{eq:SXgrad.SXcurl}.

We write generic elements $\widehat{\underline{q}}_h$ of $\SSGr{h}$ and $\suvec{v}_h$ of $\SSCr{h}$ 
respectively as $\widehat{\underline{q}}_h=(\widehat{\underline{q}}_{w,h},{\underline{q}}_{\compl,h})$
and $\suvec{v}_h=(\suvec{v}_{w,h},\uvec{v}_{\compl,h})$ 
with $\widehat{\underline{q}}_{w,h}\in\SXgrad{k}{h}$, 
$\suvec{v}_{w,h}\in\SXcurl{h}$,
and ${\underline{q}}_{\compl,h} \in \SGr{\compl,h}$ 
and $\uvec{v}_{\compl,h} \in \SCr{\compl,h}$. 

According to \eqref{eq:def.sExt}, we define the extensions of the SDDR spaces into serendipity Stokes spaces as follows:
For all $\widehat{\underline{q}}_{w,h} \in \SXgrad{k}{h}$ and all $\suvec{v}_{w,h} \in \SXcurl{h}$,
\[
  \text{
  $\sInjgrad\widehat{\underline{q}}_{w,h} \coloneq \big( \widehat{\underline{q}}_{w,h}, {\underline{0}} \big)$
  and
  $\sInjcurl\suvec{v}_{w,h} \coloneq \big( \suvec{v}_{w,h}, \uvec{0} \big)$.
  }
\]
The reduction map between the SStokes and the SDDR complexes is given by \eqref{eq:def.sRed}:
  For all $(\widehat{\underline{q}}_{w,h}, {\underline{q}}_{\compl,h}) \in \SSGr{h}$
  and all $(\suvec{v}_{w,h},\uvec{v}_{\compl,h}) \in \SSCr{h}$,
\[
\text{
$\sResgrad(\widehat{\underline{q}}_{w,h},{\underline{q}}_{\compl,h})
\coloneq \widehat{\underline{q}}_{w,h}$
and
$\sRescurl(\suvec{v}_{w,h},\uvec{v}_{\compl,h})
\coloneq \suvec{v}_{w,h}$.
}
\]

  By \eqref{eq:def.ReV}, the reduction map from the Stokes to the SStokes complexes are given by:
For all $\underline{q}_h = (\underline{q}_{w,h}, \underline{q}_{\compl,h})\in\SV{h}$ and
all $\uvec{v}_h = (\uvec{v}_{w,h}, \uvec{v}_{\compl,h}) \in \SSigma{h}$,
\[
  \text{
  $\widehat{\underline{R}}_{V,\GRAD,h}\underline{q}_h
  \coloneq
  \Big(\Rgrad{h}\Resgrad\underline{q}_h,
  \underline{q}_{\compl,h}
    \Big)$
    and
  $\suvec{R}_{V,\CURL,h}\uvec{v}_h
  \coloneq
  \Big(
  \Rcurl{h}\Rescurl\uvec{v}_h,
  \uvec{v}_{\compl,h}
  \Big)$.
  }
\]
The extension operators 
from the SStokes to the Stokes complexes are defined according to \eqref{eq:def.EX}:
For all $\widehat{\underline{q}}_h\in\SSGr{h}$ and all $\suvec{v}_h\in\SSCr{h}$,
\[
\begin{aligned}
  \underline{E}_{V,\GRAD,h}\widehat{\underline{q}}_h
  &\coloneq
  \Injgrad\Egrad{h} \widehat{\underline{q}}_{w,h}+(\widehat{\underline{0}},{\underline{q}}_{\compl,h}),
  \\ 
  \uvec{E}_{V,\CURL,h}\suvec{v}_h
  &\coloneq
  \Injcurl\Ecurl{h}\suvec{v}_{w,h} + (\suvec{0},\uvec{v}_{\compl,h}).
\end{aligned}
\]

Using \eqref{eq:def.d}, the serendipity discrete differential operators are such that, for all $(\widehat{\underline{q}}_h,\suvec{v}_h)\in\SSGr{h}\times\SSCr{h}$,
\begin{equation*}
  \begin{aligned}
    \suGh{h} \widehat{\underline{q}}_h&\coloneq\big(\sGddr{h}\widehat{\underline{q}}_h,\uGh{h} (\widehat{\underline{0}},{\underline{q}}_{\compl,h})\big)\overset{\eqref{eq:uGh2},\eqref{eq:S.S.Spaces}}=\big(\sGddr{h}\widehat{\underline{q}}_h,\uvec{d}_{\GRAD,\compl,h}^k {\underline{q}}_{\compl,h}\big),\\
    \suCh{h}\suvec{v}_h&\coloneq \big(\sCddr{h}\suvec{v}_{w,h},\uCh{h}(\suvec{0},\uvec{v}_{\compl,h})\big)\overset{\eqref{eq:uCh},\eqref{eq:S.S.Spaces}}=\big(\sCddr{h}\suvec{v}_{w,h},\uvec{d}_{\CURL,\compl,h}^k \uvec{v}_{\compl,h}\big).
  \end{aligned}
\end{equation*}
  This completes the definition of the serendipity Stokes complex corresponding to the bottom front complex in diagram \eqref{eq:construction:stokes}.
The following theorem can be proved using arguments similar to Theorem~\ref{thm:homological.properties}.
The details are omitted for the sake of brevity.

\begin{theorem}[Homological properties of the complexes in \eqref{eq:construction:stokes}]\label{thm:homological.properties2}  
    All the complexes in the diagram \eqref{eq:construction:stokes} have cohomologies that are isomorphic to the cohomology of the continuous de Rham complex.
\end{theorem}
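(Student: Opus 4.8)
The plan is to reproduce, in the three-dimensional Stokes setting, the two-step argument of Theorem~\ref{thm:homological.properties}: first verify that the four complexes in diagram~\eqref{eq:construction:stokes} satisfy the abstract hypotheses of Section~\ref{sec:gen}, then read off the conclusion from Corollary~\ref{Isom.V.V}. Concretely, I would let the DDR3d complex \eqref{eq:glob.3d-ddr.complex} play the role of $(W_i,\partial_i)_i$, its SDDR3d version that of $(\widehat W_i,\widehat\partial_i)_i$, and the discrete Stokes complex \eqref{eq:d.stokes.complex} that of $(V_i,d_i)_i$; the serendipity Stokes complex constructed in Section~\ref{sec:discrete.stokescomplex:serendipity} is then exactly the complex $(\widehat V_i,\widehat d_i)_i$ of Definition~\ref{def:serendipity.complex}. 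Once Assumptions~\ref{Isom.W.W} and~\ref{Isom.W.V} are established, Corollary~\ref{Isom.V.V} yields that all four cohomologies coincide, and since Lemma~\ref{lem:asm.SDDR} identifies the DDR3d cohomology with that of the continuous de~Rham complex, the theorem follows.

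The first hypothesis, Assumption~\ref{Isom.W.W} for DDR3d/SDDR3d together with $\Egrad{h},\Rgrad{h},\Ecurl{h},\Rcurl{h}$, is precisely the three-dimensional part of Lemma~\ref{lem:asm.SDDR} and requires no further work. The second, Assumption~\ref{Isom.W.V} for DDR3d/Stokes with the maps $\Injgrad,\Injcurl,\Injdiv$ and $\Resgrad,\Rescurl,\Resdiv$, is the statement whose proof is deferred in Section~\ref{sec:stokes}; I would prove it along the lines of Theorem~\ref{thm.hom.rotrot}. Properties \ref{P:Identity.ER} and \ref{P:E.R.cochain} are immediate from the construction: each injection appends a vanishing complementary component and each reduction discards it, while the Stokes differentials are block diagonal, with $\uGh{h}$ and $\uCh{h}$ acting by the DDR operators on the principal components and by the internal operators $\uvec{d}_{\GRAD,\compl,h}^k$, $\uvec{d}_{\CURL,\compl,h}^k$ on the complementary components $\SGr{\compl,h}$, $\SCr{\compl,h}$, and $\uDh{h}$ ignoring the complementary block (cf.\ \eqref{eq:uGh2}, \eqref{eq:uCh}). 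Hence the DDR cochain relations transfer verbatim on the principal block and the complementary block is carried through unchanged.

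The genuine difficulty is \ref{P:complex.ER}, and this is where the Stokes case departs from the rot-rot one. For the rot-rot complex the kernel of the tail map forced the complementary component to vanish, trivializing the inclusion; here the complementary components instead carry the nontrivial internal differentials. Unwinding $(\Injcurl\Rescurl-\Id)(\Kernel\uCh{h})$ and $(\Injdiv\Resdiv-\Id)(\Kernel\uDh{h})$ through the block structure and the isomorphisms \eqref{eq:isomorphism.Res.grad.curl}, one sees that \ref{P:complex.ER} amounts to the exactness of the complementary sub-complex
\[
0\to\SGr{\compl,h}\xrightarrow{\uvec{d}_{\GRAD,\compl,h}^k}\SCr{\compl,h}\xrightarrow{\uvec{d}_{\CURL,\compl,h}^k}\SDr{\compl,h}\to 0 .
\]
This is the main obstacle: it is not a formal consequence of the abstract machinery but a homological property of the fully local (bubble) part of the discrete Stokes complex, which I would import from the analysis of \cite{Hanot:23}. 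With \ref{P:Identity.ER}--\ref{P:E.R.cochain} in hand, Assumption~\ref{Isom.W.V} holds, Corollary~\ref{Isom.V.V} applies, and the de~Rham identification of Lemma~\ref{lem:asm.SDDR} closes the argument; the routine block-diagonal verifications of \ref{P:E.R.cochain} and the bookkeeping of \eqref{eq:isomorphism.Res.grad.curl} I would suppress, exactly as the paper does for the analogous DDR3d/Stokes statement.
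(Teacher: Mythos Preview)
Your proposal is correct and follows exactly the paper's intended argument: Lemma~\ref{lem:asm.SDDR} for Assumption~\ref{Isom.W.W}, an analogue of Theorem~\ref{thm.hom.rotrot} for Assumption~\ref{Isom.W.V}, then Corollary~\ref{Isom.V.V} and the de~Rham identification. You also correctly isolate a point the paper glosses over when it declares the verification ``similar'' to Theorem~\ref{thm.hom.rotrot}: in the rot-rot case the kernel condition in~\ref{P:complex.ER} forces the complementary component to vanish, whereas here the block-diagonal structure of $\uGh{h}$, $\uCh{h}$, $\uDh{h}$ genuinely reduces~\ref{P:complex.ER} to exactness of the complementary subcomplex $\SGr{\compl,h}\to\SCr{\compl,h}\to\SDr{\compl,h}$, which must be imported from~\cite{Hanot:23} (or, equivalently, deduced from the fact that the discrete Stokes and DDR3d complexes both have de~Rham cohomology, so the complementary factor in the product has trivial cohomology).
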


\section*{Acknowledgements}

Funded by the European Union (ERC Synergy, NEMESIS, project number 101115663).
Views and opinions expressed are however those of the authors only and do not necessarily reflect those of the European Union or the European Research Council Executive Agency. Neither the European Union nor the granting authority can be held responsible for them.

%------------------------------------------------------------------------------%
% Bibliography
%------------------------------------------------------------------------------%

\printbibliography

\end{document}